\newcommandx{\attn}[2][1=]{\todo[linecolor=red,backgroundcolor=blue!25,bordercolor=red,#1]{#2}}
\newcommandx{\other}[2][1=]{\todo[linecolor=OliveGreen,backgroundcolor=OliveGreen!25,bordercolor=OliveGreen,#1]{#2}}
\newcommandx{\thiswillnotshow}[2][1=]{\todo[disable,#1]{#2}}
\newcommand*\xbar[1]{%
	\hbox{%
		\vbox{%
			\hrule height 0.5pt 
			\kern0.5ex
			\hbox{%
				\kern-0.1em
				\ensuremath{#1}%
				\kern-0.1em
			}%
		}%
	}%
}
\def\argmin{\mathop{\rm arg\,min}}
\def\dist{{\rm dist}}
\def\ri{{\rm ri\,}}
\def\Span{{\rm span\,}}
\def\dim{{\rm dim\,}}
\def\lie{{\rm Lie\,}}
\def\aut{{\rm Aut\,}}
\def\K{{{\cal P}^{^{\bm{\alpha}}}_{1,n}}}
\def\dK{({{\cal P}^{^{\bm{\alpha}}}_{1,n}})^*}
\def\gK{{{\cal P}^{^{\bm{\alpha}}}_{m,n}}}
\def\dgK{({{\cal P}^{^{\bm{\alpha}}}_{m,n}})^*}
\def\bd{\partial}
\def\tx{\widetilde{x}}
\def\ty{\widetilde{y}}
\def\tz{\widetilde{z}}
\def\tv{\widetilde{v}}
\def\tu{\widetilde{u}}
\def\tw{\widetilde{w}}
\def\tbx{\widetilde{\bm{x}}}
\def\tby{\widetilde{\bm{y}}}
\def\tbz{\widetilde{\bm{z}}}
\def\tbv{\widetilde{\bm{v}}}
\def\bz{\overline{z}}
\def\bv{\overline{v}}
\def\bbx{\overline{\bm{x}}}
\def\bby{\overline{\bm{y}}}
\def\bbz{\overline{\bm{z}}}
\def\bbv{\overline{\bm{v}}}
\def\bbw{\overline{\bm{w}}}
\def\R{{\rm I\!R}}
\def\C{\mathcal{C}}
\def\B{\mathcal{B}}
\def\Fhatz{{{\cal F}_{\bm{z}}}}
\def\Fr{{{\cal F}_{\rm r}}}
\def\dpps{d_{\rm PPS}}
\def\stdCone{{\cal K}}
\def\stdFace{{\cal F}}
\def\stdIndex{{\cal I}}
\def\subSpace{{\cal L}}
\def\EucSpace{{\cal E}}
\def\diag{{\rm Diag}}
\newcommand{\inProd}[2]{\langle #1 , #2 \rangle }
\newcommand{\gKmn}[2]{{{\cal P}^{^{\bm{\alpha}}}_{#1,#2}} }
\DeclareRobustCommand{\properideal}{\mathrel{\text{\( \m@th\proper@ideal \)}}}
\newcommand{\proper@ideal}{%
	\ooalign{\( \lneq \)\cr\raise.22ex\hbox{\( \lhd \)}\cr}%
}
\newtheorem{theorem}{Theorem}[section]
\newtheorem{lemma}[theorem]{Lemma}
\newtheorem{corollary}[theorem]{Corollary}
\newtheorem{proposition}[theorem]{Proposition}
\theoremstyle{definition}
\newtheorem{definition}[theorem]{Definition}
\theoremstyle{definition}
\theoremstyle{definition}
\newtheorem{remark}[theorem]{Remark}
\numberwithin{equation}{section}
\begin{document}
	\title{Generalized power cones: optimal error bounds and automorphisms}
\author{
	Ying Lin\thanks{Department of Applied Mathematics, the Hong Kong Polytechnic University, Hong Kong, People's Republic of China. E-mail: \href{ying.lin@connect.polyu.hk}{ying.lin@connect.polyu.hk}.}
	\and 
	Scott B.\ Lindstrom\thanks{
		Centre for Optimisation and Decision Science, Curtin University, Australia.
		E-mail: \href{scott.lindstrom@curtin.edu.au}{scott.lindstrom@curtin.edu.au}.}
	\and
	Bruno F. Louren\c{c}o\thanks{Department of Statistical Inference and Mathematics, Institute of Statistical Mathematics, Japan.
		This author was supported partly by the JSPS  Grant-in-Aid for Early-Career Scientists  19K20217, 23K16844 and the Grant-in-Aid for Scientific Research (B)21H03398.
		Email: \href{bruno@ism.ac.jp}{bruno@ism.ac.jp}.}
	\and
	Ting Kei Pong\thanks{
		Department of Applied Mathematics, the Hong Kong Polytechnic University, Hong Kong, People's Republic of China.
		This author was supported partly by the Hong Kong Research Grants Council PolyU153002/21p.
		E-mail: \href{tk.pong@polyu.edu.hk}{tk.pong@polyu.edu.hk}.
	}
}
	\maketitle

	\begin{abstract}
	  Error bounds are a requisite for trusting or distrusting solutions in an informed way.
	  Until recently, provable error bounds in the absence of constraint qualifications were unattainable for many classes of cones that do not admit projections with known succinct expressions.
	  We build such error bounds for the generalized power cones, using the recently developed framework of one-step facial residual functions.
	  We also show that our error bounds are tight in the sense of that framework.
	  Besides their utility for understanding solution reliability, the error bounds we discover have additional applications to the algebraic structure of the underlying cone, which we describe.
	  In particular we use the error bounds to compute the dimension of the automorphism group for the generalized power cones, and to identify a set of generalized power cones that are self-dual, irreducible, nonhomogeneous, and perfect.
	\end{abstract}

{\small
	
	\noindent
	{\bfseries Keywords:}
error bounds, facial residual functions, H\"{o}lderian error bounds, amenable cones, generalized power cones, self-dual cones, irreducible cones, nonhomogeneous cones, perfect cones
}

	\section{Introduction}\label{sec:intro}
	In a Euclidean space \( \EucSpace \), consider the conic feasibility problem:
	\begin{equation}
		\label{eq:conic-feasibility-problem}\tag{Feas}
		\text{find } \quad \bm{x} \in (\subSpace + \bm{a}) \cap \stdCone,
	\end{equation}
	where \( \subSpace \) is a subspace, \( \bm{a} \in \EucSpace \), and \( \stdCone\) is a closed convex cone.
	We desire an upper bound on the distance from {an arbitrary} $\bm{x}$ to the feasible region $(\subSpace+ \bm{a})\cap \stdCone$.
	The upper bound we seek should depend on the two distances between $\bm{x}$ and {$\stdCone$, and} between  $\bm{x}$ and $\subSpace+ \bm{a}$ respectively.
	Such a guarantee is a kind of \textit{error bound}; error bounds are a fundamental topic in the optimization literature \cite{HF52,LP98,LT93,Pang97,ZS17} and widely used in convergence analysis of algorithms.
	Typically, \eqref{eq:conic-feasibility-problem} is almost  never solved exactly, instead algorithms and solvers often return an approximate solution.
	Then, error bounds can be used to evaluate the trustworthiness of approximate solutions because they tell us how close they are to the true set of feasible solutions.

	In this paper, we consider the case when $\stdCone = \gK$ is the generalized power cone
	$$
	\gK = \left\{\bm{x} = (\bbx,\tbx)\in \R^{m+n}\,\bigg| \,\|\bbx\|\le \prod_{i=1}^n\tx_i^{\alpha_i},\,\bbx\in \R^m,\,\tbx\in \R^n_+\right\},
	$$
	where $m\geq 1$, $n \geq 2$, \( \bm{\alpha} = (\alpha_1,\ldots,\alpha_n)\in (0,1)^n \) with \( \sum_{i=1}^n\alpha_i = 1 \), and $\|\bbx\|$ denotes the Euclidean norm of $\bbx$.
	In the specific case when $m\geq 1$, $n=2$, and $\bm{\alpha}=(1 / 2, 1 / 2)$, $\gK$ is isomorphic to a second-order cone, whose worst-case error bound is known to be H\"{o}lderian with exponent $1/2$, thanks to the work of Luo and Sturm \cite{LS00}.
	The remaining cases, while not as well-known as the second-order cone case, admit more direct modeling of certain problems 
	and have found applications in geometric programs, generalized location problems, and portfolio optimization \cite{Ch09,MC2020}.
	More broadly, the inclusion of the power cone\footnote{This refers to $\gKmn{1}{2}$.} (and the exponential cone) makes all the convex instances from the MINLPLib2 benchmark library conic representable \cite{lubin2016extended,MINLPLIB}.
	This broad utility has motivated the development of {self-concordant barriers \cite{Ch09,tunccel2010self,roy2022self}}, and the ongoing development of specialized interior point methods \cite{nesterov2012towards,SY15}.
	Optimization with the generalized power cones is implemented in commercial and open source solvers like MOSEK, Alfonso, DDS and Hypatia \cite{CKV21,KT19,PY21,MC2020}.

	One of this paper's main contributions---Theorem~\ref{thm:error-bound-power-cone-optimality}---is a complete error bound analysis for the generalized power cone problem \eqref{eq:conic-feasibility-problem}. 
	The generalized power cone cases pose {two} significant obstructions to error bound analysis that are not present in the second-order cone case.
	Firstly,  known forms for projections onto generalized power cones do not admit simple representations \cite{hien2015differential}; secondly, their facial structure is more complicated.
	{The first obstruction we obviate via the framework of one-step facial residual functions (\( \mathds{1} \)-FRFs), which was established in \cite{LiLoPo20,LiLoPo21}.}
	The second challenge, facial complexity, we tackle directly.
	In particular, we build \( \mathds{1} \)-FRFs for all faces of $\gK$.
	{All these \( \mathds{1} \)-FRFs are tight in the natural sense of \cite{LiLoPo21}.}
	Consequently, all of the error bounds in Theorem~\ref{thm:error-bound-power-cone-optimality} are tight in this sense.

	While error bounds are typically used in convergence analysis and to evaluate the quality of approximate solutions, our approach via \( \mathds{1} \)-FRFs admits a surprising additional application to the algebraic structure of the underlying cone.
	In order to explain our next results, we recall a few concepts. The \emph{automorphism group} of a cone $\stdCone$ is the set of the bijective linear operators $\bm{A}$ satisfying $\bm{A}\stdCone = \stdCone$.
	A cone is said to be \emph{homogeneous} if its automorphism group acts transitively on its relative interior. We say that a cone is \emph{irreducible} if it is not the direct sum of two nontrivial cones whose spans only intersect at the origin.
	
	Because automorphisms of cones must preserve optimal FRFs (up to positively rescaled shifts), we can use our results to  establish the automorphism group for \( \gK \) in Theorem~\ref{thm:self-duality-homogeneity} and compute its dimension in Theorem~\ref{thm:dimension-Aut(P)}.
		
	This is useful because the automorphism group of a closed convex cone $\stdCone$ has important implications for complementarity problems over $\stdCone$; see \cite{GT2014}.
	In particular, denoting the dual cone of $\stdCone$ by $\stdCone^*$, a complementarity condition of the form ``$\bm{x} \in \stdCone, \bm{y} \in \stdCone^*, \inProd{\bm{x}}{\bm{y}} = 0$''  can be split into a square system of equations if and only if the dimension of the automorphism group of $\stdCone$ is at least $\dim \stdCone$, see \cite[Theorem~1]{OG2016}.
	In this case, $\stdCone$ is said to be a \emph{perfect cone}.
	
	Many of the concrete examples of irreducible perfect cones in the literature correspond to homogeneous cones. In this paper we will show that the generalized power cone is irreducible, perfect (when $m\ge 3$) and, except when it reduces to the second order-cone case, always non-homogeneous. This gives an interesting example of an irreducible cone with good complementarity properties that is not a homogeneous cone.
	To summarize, our main contributions are as follows.
	\begin{enumerate}
		\item We completely determine the tightest possible error bounds for the generalized power cone, see Theorem~\ref{thm:error-bound-power-cone-optimality}.
		\item Using our error bounds, we completely determine the automorphism group of $\gK$ and discuss some theoretical questions related to homogeneity and perfectness (in the sense of \cite{GT14,GT2014}), see Section~\ref{sec:applications}.
	\end{enumerate}
	 Although we do not discuss the details, we mention in passing that determining the error bound associated to conic linear systems makes it possible to compute the KL-exponent of certain functions, as done, for example, in \cite[Section~5.1]{LiLoPo21} using results from \cite{YLP21}. See more on the connection between error bounds, KL exponents and convergence rates in \cite{BNPS17}.

	This paper is organized as follows.
	In Section~\ref{sec:preliminaries}, we recall {notation and preliminaries}.
	In Section~\ref{sec:error-bound-power-cone}, we furnish the eponymous error bounds.
	In Section~\ref{sec:applications}, we provide the further application to the algebraic structure of the generalized power cones.

	\section{Notation and preliminaries}\label{sec:preliminaries}
	We will use plain letters to represent real scalars, bold lowercase letters to denote vectors, bold uppercase letters to stand for matrices,\footnote{With an abuse of notation, we use $\bm 0$ to denote a zero vector / matrix, whose dimension should be clear from the context.} and curly capital letters for (sub)spaces and sets. Let \( \EucSpace \) be a finite dimensional Euclidean space, \( \R_+ \) and \( \R_- \) be the set of nonnegative and nonpositive real numbers, respectively.
	The inner product of \( \EucSpace \) is denoted by \( \inProd{ \cdot }{ \cdot  } \) and the induced norm by \( \| \cdot \| \).
	With that, for \( \bm{x} \in \EucSpace \) and a closed convex set \( \C \subseteq \EucSpace \), we denote the projection of \( \bm{x} \) onto \( \C \) by \( P_\C(\bm{x}) \) so that \( P_\C(\bm{x}) = \argmin_{\bm{y} \in \C} \| \bm{x} - \bm{y} \| \) and the distance between \( \bm{x} \) and \( \C \) by \( \dist(\bm{x}, \C) = \inf_{\bm{y} \in \C} \| \bm{x} - \bm{y} \| = \| \bm{x} - P_\C(\bm{x}) \| \).
	For any \( \bm{x} \in \EucSpace \) and \( \eta \geq 0 \), we denote the ball centered at \( \bm{x} \) with radius \( \eta  \) by \( \B(\bm{x}; \eta) := \{\bm{y} \in \EucSpace \,|\,\| \bm{y} - \bm{x} \| \leq \eta \} \); we write \( \B(\eta) \) for the ball centered at \( \bm{0} \) with radius \( \eta \) for simplicity.
	A diagonal matrix with diagonal vector being \( \bm{x} \) is denoted by \( \diag(\bm{x}) \).
	Meanwhile, we use \( \C^{\perp } \) to denote the orthogonal complement of \( \C \) and \( \bm{I}_n \) to represent the \( n \times n \) identity matrix.

	{
	  We now recall the definition of {Lipschitzian and H\"olderian error bounds}.
	  Let \( \C_1, \C_2 \subseteq \EucSpace \) be closed convex sets with \( \C_1 \cap \C_2 \neq \emptyset  \).
	  We say that \( \C_1, \C_2 \) satisfy a \emph{uniform H\"olderian error bound} with exponent \( \gamma  \in (0, 1] \) if for every bounded set \( \B \subseteq \EucSpace \), there exists a constant \( \kappa _\B \) such that
	  $\dist(\bm{x}, \C_1 \cap \C_2) \!\leq\! \kappa _\B \max\left\{\dist(\bm{x}, \C_1), \dist(\bm{x}, \C_2)\right\}^{\gamma }$ for all $\bm{x} \in \B$.
	  If \( \gamma  = 1 \), then the error bound is said to be \emph{Lipschitzian}.
	}

	Let \( \stdCone \subseteq \EucSpace \) be a closed convex cone and \( \stdCone^{*} \) be its dual cone.
	We will use ${\rm int}\,\stdCone $, \( \ri \stdCone \), \( \bd \stdCone \), \( \Span \stdCone \), \( \dim \stdCone \) to denote the interior, relative interior, boundary, linear span and dimension of \( \stdCone \), respectively.
	If \( \stdCone \cap -\stdCone = \{\bm{0}\} \), we say that \( \stdCone \) is \textit{pointed}.	

	A \textit{face} of \( \stdCone \) is a closed convex cone \( \stdFace \) satisfying \( \stdFace \subseteq \stdCone \) and the property that if \( \bm{x}, \bm{y} \in \stdCone\) and \( \bm{x} + \bm{y} \in \stdFace \), then \( \bm{x}, \bm{y} \in \stdFace \).\footnote{By convention, we discard the empty face.}
	We write \( \stdFace \unlhd \stdCone \) if \( \stdFace \) is a face of \( \stdCone \) and \( \stdFace \properideal \stdCone \) if \( \stdFace \) is a \textit{proper} face of \( \stdCone \), i.e., \( \stdFace \neq \stdCone \).
	A face \( \stdFace \) is said to be \textit{nontrivial} if \( \stdFace \) is proper and \( \stdFace \neq \stdCone \cap -\stdCone \).
	If \( \stdFace = \stdCone \cap \{\bm{z}\}^{\perp } \) for some \( \bm{z} \in \stdCone^{*} \), \( \stdFace \) is called an \textit{exposed face} of \( \stdCone \).
	
	{
	  The facial structure of the closed convex cone $\stdCone$ is important for deducing \textit{error bounds} for \eqref{eq:conic-feasibility-problem}; see the seminal work of Sturm \cite{St00}.
	  Recently, a new framework based on the \textit{facial reduction algorithm} \cite{BW81, Pa13, WM13} and \textit{one-step facial residual functions ($\mathds{1}$-FRFs)} \cite[Definition 3.4]{LiLoPo20} was proposed for establishing error bounds for \eqref{eq:conic-feasibility-problem} without requiring any constraint qualifications; see \cite{L17, LiLoPo20, LiLoPo21}.
	  Next, we present a very brief overview of the framework, for more detailed explanations and the underlying intuition behind the techniques see \cite{LiLoPo20,LiLoPo21}.
	  First, we recall the definition of
	  one-step facial residual functions.
	
	  \begin{definition}[One-step facial residual function {\cite[Definition 3.4]{LiLoPo20}}]\label{def:one-step-facial-residual-function}
	  	Let \( \mathcal{K} \) be a closed convex cone and \( \bm{z} \in \mathcal{K}^{*} \).
	  	Suppose that \( \psi _{\mathcal{K}, \bm{z}}: \R_+ \times \R_+ \to \R_+ \) satisfies:
	  	\begin{enumerate}[label={(\roman*)}]
	  		\item \( \psi _{\mathcal{K}, \bm{z}} \) is nonnegative,  nondecreasing in each argument and for every \( t \in \R_+ \), \( \psi _{\mathcal{K}, \bm{z}} (0, t) = 0 \).
	  		\item The following implication holds for any \( \bm{x} \in \Span \mathcal{K} \) and \( \epsilon \geq 0 \):\vspace{-0.15 cm}
	  		\[ \vspace{-0.15 cm}
	  		\dist(\bm{x}, \mathcal{K}) \leq \epsilon ,\,\inProd{ \bm{x}}{ \bm{z} } \leq \epsilon \implies \dist(\bm{x}, \mathcal{K} \cap \{\bm{z}\}^{\perp }) \leq \psi _{\mathcal{K}, \bm{z}} (\epsilon, \| \bm{x} \|).
	  		\]
	  	\end{enumerate}
	  	Then, \( \psi _{\mathcal{K}, \bm{z}} \) is said to be a one-step facial residual function (\( \mathds{1} \)-FRF) for \( \mathcal{K} \) and \( \bm{z} \).
	  \end{definition}
	  The basic idea of the  aforementioned framework  is as follows. Suppose that in each step of the facial reduction algorithm we can find a suitable one-step facial residual function for the ``current'' face and ``next'' exposing vector until we reach a face $\stdFace$ such that $\stdFace$ and $\subSpace + \bm{a}$ satisfy the \textit{partial polyhedral Slater's (PPS) condition} \cite[Definition 3]{L17}.\footnote{ We note that this implies a Lipschitzian error bound holds for $\stdFace$ and $\subSpace + \bm{a}$, see \cite[Corollary~3]{BBL99} and the discussion preceding \cite[Proposition~2.3]{LiLoPo20}.}
	  Then, we can  construct an error bound for $\stdCone$ and $\subSpace + \bm{a}$ by composing these residual functions in a specific manner.
	  In this regard, if \eqref{eq:conic-feasibility-problem} is feasible, we define the \textit{distance to the PPS condition} of \eqref{eq:conic-feasibility-problem}, denoted by $ \dpps(\stdCone, \subSpace + \bm{a}) $, as the length \textit{minus one} of the \textit{shortest} chain of faces (among those chains constructed as in \cite[Proposition~5]{L17}) such that the PPS condition holds for the final face in the chain and $\subSpace + \bm{a}$.
	}

	{
	  We end this section with the following lemma, which is useful in the analysis of one-dimensional faces. It will be used repeatedly in our subsequent discussions.
	  \begin{lemma}[{\cite[Lemma 2.5]{LiLoPo21}}]
		\label{lemma:Lemma-2.11}
		Let \( \stdCone \) be a pointed closed convex cone and let \( \bm{z}\in \partial \stdCone^*\setminus \{{\bm 0}\} \) be such that \( \stdFace := \{\bm{z}\}^{\perp} \cap \stdCone \) is a one-dimensional proper face of \( \stdCone \).
		Let \( \bm{f} \in \stdCone \setminus \{\bm{0}\} \) be such that \( \stdFace = \{t \bm{f} \, | \, t \geq 0\} \).
		Let \( \eta > 0 \) and \( \bm{v} \in \partial \stdCone \cap B(\eta) \setminus \stdFace \), \( \bm{w} = P_{\{\bm{z}\}^{\perp}}(\bm{v}) \) and \( \bm{u} = P_{\stdFace}(\bm{w}) \) with \( \bm{u} \neq \bm{w} \).
		Then it holds that \( \left\langle \bm{f}, \bm{z} \right\rangle = 0 \) and we have
		\[
		  \| \bm{v} - \bm{w} \| = \frac{| \left\langle \bm{z}, \bm{v} \right\rangle |}{\| \bm{z} \|}, \qquad \| \bm{u} - \bm{w} \| =
		  \begin{cases}
			\left\| \bm{v} - \frac{\left\langle \bm{z}, \bm{v} \right\rangle}{\| \bm{z} \|^2}\bm{z} - \frac{\left\langle \bm{f}, \bm{v} \right\rangle}{\| \bm{f} \|^2}\bm{f} \right\| & \text{ if } \left\langle \bm{f}, \bm{v} \right\rangle \geq 0, \\
			\,\,\,\left\| \bm{v} - \frac{\left\langle \bm{z}, \bm{v} \right\rangle}{\| \bm{z} \|^2} \bm{z} \right\| & \text{ otherwise }.
		  \end{cases}
		\]
		Moreover, when \( \left\langle \bm{f}, \bm{v} \right\rangle \geq 0 \) (or, equivalently, \( \left\langle \bm{f}, \bm{w} \right\rangle \geq 0 \)), we have \( \bm{u} = P_{\Span \stdFace}(\bm{w}) \).
		On the other hand, if \( \left\langle \bm{f}, \bm{v} \right\rangle < 0 \), we have \( \bm{u} = \bm{0} \).
	  \end{lemma}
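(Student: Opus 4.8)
The plan is to reduce the whole statement to two standard closed-form metric projections — one onto a hyperplane through the origin, one onto a ray — and then simply bookkeep the resulting vectors. The identity $\langle \bm{f}, \bm{z}\rangle = 0$ is immediate: $\bm{f}$ generates the face $\stdFace = \{\bm{z}\}^{\perp} \cap \stdCone$, so in particular $\bm{f} \in \{\bm{z}\}^{\perp}$. Since $\bm{z} \neq \bm{0}$ and $\{\bm{z}\}^{\perp}$ is a hyperplane through the origin, $\bm{w} = P_{\{\bm{z}\}^{\perp}}(\bm{v}) = \bm{v} - \frac{\langle \bm{z}, \bm{v}\rangle}{\|\bm{z}\|^2}\bm{z}$, so $\bm{v} - \bm{w} = \frac{\langle \bm{z}, \bm{v}\rangle}{\|\bm{z}\|^2}\bm{z}$ and $\|\bm{v} - \bm{w}\| = \frac{|\langle \bm{z}, \bm{v}\rangle|}{\|\bm{z}\|}$, which is the first displayed identity.

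Next, feeding $\langle \bm{f}, \bm{z}\rangle = 0$ into the formula for $\bm{w}$ gives $\langle \bm{f}, \bm{w}\rangle = \langle \bm{f}, \bm{v}\rangle$, which proves the claimed equivalence of the two sign conditions and lets me pass freely between $\bm{v}$ and $\bm{w}$. The face $\stdFace$ is the ray $\R_+\bm{f}$ with $\bm{f} \neq \bm{0}$, and projection onto a ray is standard: the closest point of the line $\Span \stdFace = \R\bm{f}$ to $\bm{w}$ is $\frac{\langle \bm{f}, \bm{w}\rangle}{\|\bm{f}\|^2}\bm{f}$, which already lies on the ray when that coefficient is nonnegative, and otherwise convexity forces the closest feasible point to be the endpoint $\bm{0}$. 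Hence $\bm{u} = \frac{\langle \bm{f}, \bm{w}\rangle}{\|\bm{f}\|^2}\bm{f}$ when $\langle \bm{f}, \bm{w}\rangle \geq 0$ and $\bm{u} = \bm{0}$ otherwise.

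The two displayed branches then follow by substitution. If $\langle \bm{f}, \bm{v}\rangle \geq 0$, then $\bm{u} = \frac{\langle \bm{f}, \bm{v}\rangle}{\|\bm{f}\|^2}\bm{f} = P_{\Span \stdFace}(\bm{w})$ (the final clause), and $\bm{w} - \bm{u} = \bm{v} - \frac{\langle \bm{z}, \bm{v}\rangle}{\|\bm{z}\|^2}\bm{z} - \frac{\langle \bm{f}, \bm{v}\rangle}{\|\bm{f}\|^2}\bm{f}$ gives the first branch; if $\langle \bm{f}, \bm{v}\rangle < 0$, then $\bm{u} = \bm{0}$ and $\|\bm{u} - \bm{w}\| = \|\bm{w}\| = \|\bm{v} - \frac{\langle \bm{z}, \bm{v}\rangle}{\|\bm{z}\|^2}\bm{z}\|$ is the second. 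I do not anticipate a genuine obstacle here; the only things requiring care are stating the two projection formulas correctly and noting which hypotheses are actually used — pointedness of $\stdCone$ together with $\bm{z} \in \partial\stdCone^*$ ensure that $\stdFace$ is a genuine ray rather than a line, so that the generator $\bm{f}$ and the sign dichotomy are well posed, whereas $\bm{u} \neq \bm{w}$ and $\bm{v} \in \partial\stdCone \cap B(\eta) \setminus \stdFace$ are inessential for the identities themselves and merely fix the setting in which the lemma is applied.
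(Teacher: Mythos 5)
The paper does not prove this lemma but imports it by citation as \cite[Lemma 2.5]{LiLoPo21}, so there is no in-paper argument to compare against. Your proof is correct and is the canonical one: \( \langle \bm{f}, \bm{z} \rangle = 0 \) is immediate from \( \bm{f} \in \stdFace \subseteq \{\bm{z}\}^{\perp} \), the explicit formulas for projection onto the hyperplane \( \{\bm{z}\}^{\perp} \) and onto the ray \( \{ t\bm{f} : t \geq 0\} \) (by clipping the line-projection coefficient at zero) give \( \bm{w} \) and \( \bm{u} \) directly, and the displayed identities and the sign dichotomy follow by substitution once one observes \( \langle \bm{f}, \bm{w} \rangle = \langle \bm{f}, \bm{v} \rangle \).
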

	}

	\section{Error bounds for the generalized power cone}\label{sec:error-bound-power-cone}
	We consider the generalized power cone and its dual.
	Let \( m\ge 1 \), \( n\ge 2 \) and \( \bm{\alpha} = (\alpha_1,\ldots,\alpha_n)\in (0,1)^n \) with \( \sum_{i=1}^n\alpha_i = 1 \), the generalized power cone \( \gK \) and its dual \( \dgK \) are given respectively by
	\begin{equation}\label{eq:gpowercones}
	\begin{aligned}
	\gK  & = \left\{\bm{x} = (\bbx,\tbx)\in \R^{m+n}\,\bigg| \,\|\bbx\|\le \prod_{i=1}^n\tx_i^{\alpha_i},\,\bbx\in \R^m,\,\tbx\in \R^n_+\right\},                               \\
	\dgK & = \left\{\bm{z} = (\bbz,\tbz)\in \R^{m+n}\,\bigg| \,\|\bbz\|\le \prod_{i=1}^n\left(\frac{\tz_i}{\alpha_i}\right)^{\alpha_i},\,\bbz\in \R^m,\,\tbz\in \R^n_+\right\}.
	\end{aligned}
	\end{equation}
	Here, given a vector \( \bm{x}\in \R^{m+n} \), we let \( \bbx\in \R^m \) be the vector corresponding to its first \( m \) entries and \( \tbx\in \R^n \) be the vector corresponding to its last \( n \) entries.

	In this section, we will prove the main result of our paper: a complete analysis of the error bounds of $\gK$. This will require an analysis of the facial structure of $\gK$ which we will do shortly {after the following lemmas}.
	\begin{lemma}\label{lemma:inequ-inprod-power-cone}
		Let \( n\ge 2 \) and \( \bm{\alpha} = (\alpha_1,\ldots,\alpha_n)\in (0,1)^n \) with \( \sum_{i=1}^n\alpha_i = 1 \). Let \( \bm{\zeta}\in {\rm int}\,\R_{-}^n \) satisfy \( \prod_{i=1}^n (-\zeta_i/\alpha_i)^{\alpha_i} = 1 \). Define \( \widetilde{\bm{\zeta}} := -\bm\alpha\circ \bm{\zeta}^{-1} \), where \( \circ \) is the Hadamard product and the inverse is taken componentwise. Then there exist \( C > 0 \) and \( \epsilon > 0 \) so that
		\begin{equation}\label{eq:inequ-inprod-power-cone}
			-1 - \inProd{\bm{\zeta}}{\bm{\omega}} \ge C\|\bm{\omega} - \widetilde{\bm{\zeta}}\|^2 \,\,{whenever}\,\bm{\omega}\in {\rm int}\,\R^n_+,\,\|\bm{\omega} - \widetilde{\bm{\zeta}}\|\le \epsilon {\,and\,} \prod_{i=1}^n\omega_i^{\alpha_i} = 1.
		\end{equation}
		Moreover, for any \( \bm{\omega}\in {\rm int}\,\R_+^n \) satisfying \( \prod_{i=1}^n\omega_i^{\alpha_i} = 1 \), it holds that \( \inProd{\bm{\zeta}}{\bm{\omega}}\le-1 \); furthermore, we have \( \inProd{\bm{\zeta}}{\bm{\omega}}=-1 \) if and only if \( \bm{\omega} = \widetilde{\bm{\zeta}} \).
	\end{lemma}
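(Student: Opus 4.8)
The plan is to reduce both assertions to the behaviour of the single scalar function $\theta(t):=t-1-\ln t$ near $t=1$, by means of a change of variables that simultaneously linearizes the quantity $-1-\inProd{\bm\zeta}{\bm\omega}$ and turns the constraint $\prod_i\omega_i^{\alpha_i}=1$ into an affine condition. First I would record the elementary facts about $\widetilde{\bm\zeta}$. Since $\zeta_i<0$ and $\alpha_i>0$, each $\widetilde\zeta_i=-\alpha_i/\zeta_i>0$, so $\widetilde{\bm\zeta}\in{\rm int}\,\R^n_+$; the normalization $\prod_i(-\zeta_i/\alpha_i)^{\alpha_i}=1$ gives $\prod_i(-\zeta_i)^{\alpha_i}=\prod_i\alpha_i^{\alpha_i}$, whence $\prod_i\widetilde\zeta_i^{\alpha_i}=\prod_i\alpha_i^{\alpha_i}/\prod_i(-\zeta_i)^{\alpha_i}=1$, so $\widetilde{\bm\zeta}$ lies on the same surface as the admissible $\bm\omega$; and $\inProd{\bm\zeta}{\widetilde{\bm\zeta}}=\sum_i\zeta_i(-\alpha_i/\zeta_i)=-\sum_i\alpha_i=-1$.

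Next, for an admissible $\bm\omega$ (i.e. $\bm\omega\in{\rm int}\,\R^n_+$ with $\prod_i\omega_i^{\alpha_i}=1$) I would set $t_i:=\omega_i/\widetilde\zeta_i>0$. Then $\prod_i t_i^{\alpha_i}=\prod_i\omega_i^{\alpha_i}/\prod_i\widetilde\zeta_i^{\alpha_i}=1$, equivalently $\sum_i\alpha_i\ln t_i=0$, and $(-\zeta_i)\omega_i=(-\zeta_i)\widetilde\zeta_i\,t_i=\alpha_i t_i$, so $\inProd{\bm\zeta}{\bm\omega}=-\sum_i\alpha_i t_i$. Using $\sum_i\alpha_i=1$ and $\sum_i\alpha_i\ln t_i=0$ this yields the key identity $-1-\inProd{\bm\zeta}{\bm\omega}=\sum_i\alpha_i t_i-1=\sum_i\alpha_i(t_i-1-\ln t_i)=\sum_i\alpha_i\theta(t_i)$. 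Since $\theta$ is convex on $(0,\infty)$ with minimum value $0$ attained only at $t=1$, the right-hand side is nonnegative, giving $\inProd{\bm\zeta}{\bm\omega}\le-1$, and it equals zero iff every $t_i=1$, i.e. iff $\bm\omega=\widetilde{\bm\zeta}$. This disposes of the ``moreover'' part (equivalently, one may phrase this step as the weighted AM--GM inequality $\sum_i\alpha_i t_i\ge\prod_i t_i^{\alpha_i}$ with its equality case).

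For the quadratic estimate \eqref{eq:inequ-inprod-power-cone} I would first establish the one-variable bound $\theta(t)=t-1-\ln t\ge\frac29(t-1)^2$ for $t\in[\frac12,\frac32]$: the difference $t\mapsto\theta(t)-\frac29(t-1)^2$ has value and first derivative zero at $t=1$ and second derivative $t^{-2}-\frac49\ge0$ on $[\frac12,\frac32]$, hence is convex and nonnegative there. Then choose $\epsilon:=\frac12\min_i\widetilde\zeta_i>0$; if $\|\bm\omega-\widetilde{\bm\zeta}\|\le\epsilon$ then $|\omega_i-\widetilde\zeta_i|\le\epsilon$ and $|t_i-1|=|\omega_i-\widetilde\zeta_i|/\widetilde\zeta_i\le\frac12$ for all $i$ (in particular $\bm\omega\in{\rm int}\,\R^n_+$ is automatic). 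Applying the one-variable bound termwise to the key identity gives $-1-\inProd{\bm\zeta}{\bm\omega}=\sum_i\alpha_i\theta(t_i)\ge\frac29\sum_i\alpha_i(t_i-1)^2=\frac29\sum_i(\alpha_i/\widetilde\zeta_i^2)(\omega_i-\widetilde\zeta_i)^2\ge C\|\bm\omega-\widetilde{\bm\zeta}\|^2$ with $C:=\frac29\min_i(\alpha_i/\widetilde\zeta_i^2)>0$, as required.

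There is no genuine obstacle here: the only real idea is the substitution $t_i=\omega_i/\widetilde\zeta_i$, after which the global inequality is just convexity of $-\ln$ and the quadratic growth is just its local strong convexity. The points that merely require care are the bookkeeping of the constants $C$ and $\epsilon$ and the verification that a sufficiently small $\epsilon$ forces $t_i$ into the interval $[\frac12,\frac32]$ and keeps $\bm\omega$ in ${\rm int}\,\R^n_+$.
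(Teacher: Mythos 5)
Your proof is correct, and the underlying analytic fact you use (local quadratic growth of $t\mapsto t-1-\ln t$ at $t=1$, i.e.\ strong convexity of $-\ln$) is exactly what powers the paper's argument. What differs is the packaging. The paper works directly in the $\omega_i$ variables: it Taylor-expands $\ln$ at $\widetilde\zeta_i$ to obtain constants $c_i,\epsilon_i>0$ with $\ln\widetilde\zeta_i\ge\ln\omega_i-\widetilde\zeta_i^{-1}(\omega_i-\widetilde\zeta_i)+c_i(\omega_i-\widetilde\zeta_i)^2$ near $\widetilde\zeta_i$, multiplies by $\alpha_i$, sums, and uses the two normalizations to collapse the logarithmic terms. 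Your substitution $t_i=\omega_i/\widetilde\zeta_i$ accomplishes the same thing more transparently: it turns the constraint into $\sum\alpha_i\ln t_i=0$, produces the clean identity $-1-\inProd{\bm\zeta}{\bm\omega}=\sum_i\alpha_i\theta(t_i)$, and lets you quote a single explicit one-variable bound $\theta(t)\ge\tfrac29(t-1)^2$ on $[\tfrac12,\tfrac32]$, yielding concrete $C$ and $\epsilon$ where the paper leaves them implicit. The more substantive divergence is in the ``moreover'' part: the paper proves $\inProd{\bm\zeta}{\bm\omega}\le-1$ by observing $(-1,\bm\omega)\in\K$ and $(1,-\bm\zeta)\in\dK$ and invoking cone duality, then handles the equality case via strict concavity of $\ln$; you get both the inequality and its equality characterization directly from the nonnegativity of $\theta$ in the same identity, with no appeal to the cone pair at all. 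That is a genuine simplification and makes the lemma self-contained.
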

	\begin{proof}
		For each \( i \), we see from the Taylor series of \( \ln(\cdot) \) at \( \widetilde{\zeta}_i > 0 \) that
		\begin{equation*}
			\ln(\omega_i) = \ln(\widetilde{\zeta}_i) + \widetilde{\zeta}_i^{-1}(\omega_i - \widetilde{\zeta}_i) - \widetilde{\zeta}_i^{-2}(\omega_i - \widetilde{\zeta}_i)^2 + O(|\omega_i - \widetilde{\zeta}_i|^3)\,\,{\rm as}\,\,\omega_i \to \widetilde{\zeta}_i, \,\,\omega_i > 0.
		\end{equation*}
		Thus, there exist \( \epsilon_i > 0 \) and \( c_i > 0 \) so that
		\[
		\ln(\widetilde{\zeta}_i) \ge \ln(\omega_i) - \widetilde{\zeta}_i^{-1}(\omega_i - \widetilde{\zeta}_i) + c_i (\omega_i - \widetilde{\zeta}_i)^2\,\,\,\mbox{whenever} \,|\omega_i - \widetilde{\zeta}_i|\le \epsilon_i \,{\rm and}\,\omega_i > 0.
		\]
		Let \( \epsilon := \min\limits_{1\le i\le n}\epsilon_i > 0 \). Multiplying both sides of the above inequality by \( \alpha_i \) and summing the resulting inequalities from \( i = 1 \) to \( n \), we see that whenever \( \bm{\omega}\in {\rm int}\,\R^n_+ \) satisfies \( \|\bm{\omega} - \widetilde{\bm{\zeta}}\|\le \epsilon \) and \( \prod_{i=1}^n\omega_i^{\alpha_i} = 1 \), we have
		\begin{align}
			&0  \overset{\rm (a)}= \sum_{i=1}^n\alpha_i\ln(\widetilde{\zeta}_i)\ge \sum_{i=1}^n\alpha_i\ln(\omega_i) - \sum_{i=1}^n\alpha_i\widetilde{\zeta}_i^{-1}(\omega_i - \widetilde{\zeta}_i) + \sum_{i=1}^n\alpha_ic_i (\omega_i - \widetilde{\zeta}_i)^2 \notag\\
			& \overset{\rm (b)}= - \sum_{i=1}^n\alpha_i\widetilde{\zeta}_i^{-1}(\omega_i - \widetilde{\zeta}_i) + \sum_{i=1}^n\alpha_ic_i (\omega_i - \widetilde{\zeta}_i)^2
			\overset{\rm (c)}= - \sum_{i=1}^n\alpha_i\widetilde{\zeta}_i^{-1}\omega_i \!+\! 1 \!+\! \sum_{i=1}^n\alpha_ic_i (\omega_i - \widetilde{\zeta}_i)^2                                                                                                                          \notag\\
			& = \sum_{i=1}^n\zeta_i\omega_i + 1 + \sum_{i=1}^n\alpha_ic_i (\omega_i - \widetilde{\zeta}_i)^2,\notag
		\end{align}
		where (a) and (b) hold because \( \prod_{i=1}^n\widetilde{\zeta}_i^{\alpha_i} = \prod_{i=1}^n\omega_i^{\alpha_i} = 1 \), (c) uses the fact that \( \sum_{i=1}^n\alpha_i = 1 \), and the last equality follows from the definition of \( \widetilde{\bm{\zeta}} \). Rearranging the above inequality, we conclude that \eqref{eq:inequ-inprod-power-cone} holds with \( C = \min\limits_{1\le i\le n}\alpha_ic_i > 0 \).

		Next, let \( \bm{\omega}\in {\rm int}\,\R_+^n \) satisfy \( \prod_{i=1}^n\omega_i^{\alpha_i} = 1 \). Then \( (-1,\bm{\omega})\in \K \). Recall from the assumption that \( (1,-\bm{\zeta})\in \dK \). From these we deduce \( \inProd{\bm{\zeta}}{\bm{\omega}}\le-1 \). If \( \inProd{\bm{\zeta}}{\bm{\omega}}=-1 \), then
		\[
		\sum_{i=1}^n \alpha_i\left(\frac{-\zeta_i}{\alpha_i}\right)\omega_i = \sum_{i=1}^n (-\zeta_i)\omega_i = 1 = \prod_{i=1}^n\omega_i^{\alpha_i} = \prod_{i=1}^n\left(\frac{-\zeta_i}{\alpha_i}\right)^{\alpha_i}\prod_{i=1}^n\omega_i^{\alpha_i} .
		\]
		Taking \( \ln \) on both sides of the above equality, we see that
		\[
		\ln\left[\sum_{i=1}^n \alpha_i\left(\frac{-\zeta_i}{\alpha_i}\right)\omega_i\right] = \sum_{i=1}^n\alpha_i \ln\left[\left(\frac{-\zeta_i}{\alpha_i}\right)\omega_i\right].
		\]
		Since \( \ln \) is strictly concave and \( \alpha_i \in (0,1) \) for all \( i \), we conclude that there exists \( c > 0 \) so that \( \omega_i\cdot(-\zeta_i/\alpha_i) = c \) for all \( i \). This, together with the facts that
		\( \prod_{i=1}^n\omega_i^{\alpha_i} = \prod_{i=1}^n (-\zeta_i/\alpha_i)^{\alpha_i} = 1 \) and \( \sum_{i=1}^n\alpha_i = 1 \), gives \( c = 1 \). It thus follows that \( \bm{\omega} = \widetilde{\bm{\zeta}} \). Conversely, it is routine to check that if \( \bm{\omega} = \widetilde{\bm{\zeta}} \), then \( \prod_{i=1}^n\omega_i^{\alpha_i} = 1 \) and \( \inProd{\bm{\zeta}}{\bm{\omega}}=-1 \).
	\end{proof}

	{
	  The next lemma is obtained by applying \cite[Lemma 4.1]{LiLoPo21} with \( p = q = 2 \).
	  \begin{lemma}
		\label{lemma:Lemma-4.1}
		Let \( \bm{\zeta} \in \R^n \) (\( n \geq 1 \)) satisfy \( \| \bm{\zeta} \| = 1 \).
		Define \( \overline{\bm{\zeta}} := -\bm{\zeta} \). Then there exist \( C > 0 \) and \( \epsilon > 0 \) so that
		\begin{equation}
		  \label{eq:inequ-p=q=2}
		  1 + \left\langle \bm{\zeta}, \bm{w} \right\rangle \geq C \sum_{i \in I} | w_i - \overline{\zeta}_i |^2 + \frac{1}{2} \sum_{i \notin I} | w_i |^2  \ \text{ whenever } \| \bm{w} - \overline{\bm{\zeta}} \| \leq \epsilon \ \text{ and } \ \| \bm{w} \| = 1,
		\end{equation}
		where \( I = \{i \,|\, \overline{\zeta}_i \neq 0\} \).
		Furthermore, for any \( \bm{w} \) satisfying \( \| \bm{w} \| \leq  1\), it holds that \( \left\langle \bm{\zeta}, \bm{w} \right\rangle \geq -1 \), with the equality holding if and only if \( \bm{w} = \overline{\bm{\zeta}} \).
	  \end{lemma}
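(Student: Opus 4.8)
The plan is to reduce the whole statement to one elementary identity valid on the unit sphere. First I would note that \( \| \bm{\zeta} \| = 1 \) forces \( \| \overline{\bm{\zeta}} \| = 1 \), so for any \( \bm{w} \) with \( \| \bm{w} \| = 1 \), expanding the square gives \( \| \bm{w} - \overline{\bm{\zeta}} \|^2 = \| \bm{w} \|^2 - 2\inProd{\bm{w}}{\overline{\bm{\zeta}}} + \| \overline{\bm{\zeta}} \|^2 = 2 - 2\inProd{\bm{w}}{\overline{\bm{\zeta}}} \), and since \( \overline{\bm{\zeta}} = -\bm{\zeta} \) this equals \( 2\bigl(1 + \inProd{\bm{\zeta}}{\bm{w}}\bigr) \). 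Hence
\[
  1 + \inProd{\bm{\zeta}}{\bm{w}} = \tfrac{1}{2}\| \bm{w} - \overline{\bm{\zeta}} \|^2 \geq 0 \qquad \text{whenever } \| \bm{w} \| = 1.
\]

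Next I would split the right-hand side coordinatewise. Since \( \overline{\zeta}_i = 0 \) for every \( i \notin I \), we have \( \| \bm{w} - \overline{\bm{\zeta}} \|^2 = \sum_{i \in I} | w_i - \overline{\zeta}_i |^2 + \sum_{i \notin I} | w_i |^2 \), so the identity above reads \( 1 + \inProd{\bm{\zeta}}{\bm{w}} = \tfrac{1}{2}\sum_{i \in I} | w_i - \overline{\zeta}_i |^2 + \tfrac{1}{2}\sum_{i \notin I} | w_i |^2 \) for every \( \bm{w} \) on the unit sphere. From this, \eqref{eq:inequ-p=q=2} is immediate: it holds with the explicit choice \( C = 1/2 \) (in fact with equality) and with \( \epsilon \) equal to any positive number, because replacing \( C = 1/2 \) by a smaller \( C \) only shrinks the right-hand side while the left-hand side is unchanged. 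Unlike the general \( p, q \) version in \cite[Lemma 4.1]{LiLoPo21}, the case \( p = q = 2 \) is exact, so no genuine Taylor expansion and no restriction on \( \epsilon \) are needed here; this is really the only ``obstacle'', and it evaporates.

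Finally, for the global assertion I would argue directly. If \( \| \bm{w} \| \leq 1 \), then Cauchy--Schwarz yields \( \inProd{\bm{\zeta}}{\bm{w}} \geq -\| \bm{\zeta} \|\,\| \bm{w} \| \geq -1 \). If \( \inProd{\bm{\zeta}}{\bm{w}} = -1 \), then \( 1 = | \inProd{\bm{\zeta}}{\bm{w}} | \leq \| \bm{\zeta} \|\,\| \bm{w} \| = \| \bm{w} \| \leq 1 \) forces \( \| \bm{w} \| = 1 \), and the identity of the first step then gives \( \tfrac{1}{2}\| \bm{w} - \overline{\bm{\zeta}} \|^2 = 1 + \inProd{\bm{\zeta}}{\bm{w}} = 0 \), i.e. \( \bm{w} = \overline{\bm{\zeta}} \); conversely \( \inProd{\bm{\zeta}}{\overline{\bm{\zeta}}} = -\| \bm{\zeta} \|^2 = -1 \), which completes the characterization of the equality case.
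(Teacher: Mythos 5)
Your proof is correct, and it takes a genuinely different route from the paper. The paper's ``proof'' is just a citation: the lemma is stated as a specialization of \cite[Lemma 4.1]{LiLoPo21} to the case $p=q=2$, and the cited general result (covering $p,q \neq 2$) relies on a local expansion argument, which is why the statement here is phrased in terms of some $C>0$ and some small $\epsilon>0$. You observe instead that for $p=q=2$ with $\|\bm{\zeta}\|=1$ the entire content collapses to the polarization identity
\[
1 + \inProd{\bm{\zeta}}{\bm{w}} = \tfrac12 \|\bm{w} - \overline{\bm{\zeta}}\|^2 \quad\text{for all } \|\bm{w}\|=1,
\]
split coordinatewise according to whether $i\in I$. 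That makes \eqref{eq:inequ-p=q=2} an \emph{equality} with $C=1/2$, valid with no restriction on $\epsilon$, and the ``furthermore'' part follows from Cauchy--Schwarz plus the equality case of that identity. This buys you an elementary, fully self-contained argument and the sharper conclusion that the bound is exact; what it loses relative to the paper's approach is only brevity and uniformity, since the paper reuses a single general lemma to handle all $p,q$ simultaneously. Both are correct; yours is arguably preferable here because it makes visible that the $p=q=2$ case is degenerate in a helpful way.
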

	}

	\subsection[Proper nontrivial exposed faces of power cone]{The facial structure of \( \gK \)}\label{subsec:facial-structure}
	In this subsection, we discuss the faces of $ \gK$.
	{
	  We first characterize the proper nontrivial exposed faces of $\gK$ in the following proposition.
	  \begin{proposition}[Proper nontrivial exposed faces of \( \gK \)]\label{prop:facial-stru-gK}
		Let \( \bm{z} = (\bbz,\tbz)\in \bd\dgK\backslash\{\bm{0}\} \).
		\begin{enumerate}[label={(\roman*)}]
		  \item \label{item:case-I-facial-structure} If \( \bbz \neq \bm{0} \), then \( \bm{z} \) exposes the following one-dimensional face: 
				\begin{equation}\label{eq:1st-type-face-ray}
				  \displayindent0pt
				  \displaywidth\textwidth
				  \Fr := \{\bm{z}\}^\perp\cap \gK =  \{t \bm{f}\in \R^{m+n}\,| \,t\ge 0\}\,\,\mbox{with} \,\bm{f}= (-\bbz/\|\bbz\|^2,\bm{\alpha}\circ \tbz^{-1}),
				\end{equation}
				where the inverse is taken componentwise.

		  \item \label{item:case-II-facial-structure} If \( \bbz = \bm{0} \), then \( \bm{z} \) exposes the following face of dimension \( n - | \stdIndex | \):
				\begin{equation}\label{eq:2nd-type-face-d-dim}
				  \displayindent0pt
				  \displaywidth\textwidth
				  \Fhatz := \{\bm{z}\}^\perp \cap \gK = \{\bm{x} = (\bbx,\tbx)\in \R_+^{m+n}\,| \,\bbx = \bm{0},\,\tx_i = 0\ \forall i\in \stdIndex\},
				\end{equation}
				where \( \stdIndex := \{i \,|\, \tz_i > 0\}\neq \emptyset \) and \( | \stdIndex | \) denotes the cardinality of $\stdIndex$.
		\end{enumerate}
	  \end{proposition}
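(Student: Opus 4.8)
The plan is to argue directly from \eqref{eq:gpowercones}. Since any $\bm z\in\dgK$ and $\bm x\in\gK$ satisfy $\inProd{\bm z}{\bm x}\ge 0$, the exposed face $\{\bm z\}^\perp\cap\gK$ equals $\{\bm x\in\gK:\inProd{\bm z}{\bm x}=0\}$, so everything reduces to the equality case in $\inProd{\bm z}{\bm x}\ge 0$. Writing $\bm x=(\bbx,\tbx)$ and $\bm z=(\bbz,\tbz)$, the relevant chain combines Cauchy--Schwarz on the first $m$ coordinates with weighted AM--GM on the last $n$:
\[
\inProd{\bm z}{\bm x}=\inProd{\bbz}{\bbx}+\sum_{i=1}^{n}\tz_i\tx_i\ \ge\ -\|\bbz\|\,\|\bbx\|+\prod_{i=1}^{n}\Bigl(\tfrac{\tz_i\tx_i}{\alpha_i}\Bigr)^{\alpha_i}=-\|\bbz\|\,\|\bbx\|+\prod_{i=1}^{n}\Bigl(\tfrac{\tz_i}{\alpha_i}\Bigr)^{\alpha_i}\!\prod_{i=1}^{n}\tx_i^{\alpha_i}\ \ge\ 0,
\]
where the last step uses $\|\bbz\|\le\prod_i(\tz_i/\alpha_i)^{\alpha_i}$ and $\|\bbx\|\le\prod_i\tx_i^{\alpha_i}$. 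Hence $\inProd{\bm z}{\bm x}=0$ forces every inequality above to hold with equality, and the proof is a matter of reading these equalities off.

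Before doing so I would record two normalizations. If $\bbz\ne\bm 0$, then $0<\|\bbz\|\le\prod_i(\tz_i/\alpha_i)^{\alpha_i}$ forces $\tz_i>0$ for all $i$, and $\bm z\in\bd\dgK$ then yields $\|\bbz\|=\prod_i(\tz_i/\alpha_i)^{\alpha_i}$; this is case \ref{item:case-I-facial-structure}. If $\bbz=\bm 0$, then $\bm z\ne\bm 0$ forces $\stdIndex=\{i:\tz_i>0\}\ne\emptyset$, and $\stdIndex$ is a \emph{proper} subset of $\{1,\dots,n\}$, for otherwise $\bm z\in\operatorname{int}\dgK$; this is case \ref{item:case-II-facial-structure}.

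Case \ref{item:case-II-facial-structure} is essentially immediate: here $\inProd{\bm z}{\bm x}=\sum_{i\in\stdIndex}\tz_i\tx_i$ vanishes on $\bm x\in\gK$ exactly when $\tx_i=0$ for all $i\in\stdIndex$, which (as $\stdIndex\ne\emptyset$) forces $\prod_i\tx_i^{\alpha_i}=0$ and hence $\bbx=\bm 0$; this is precisely the set in \eqref{eq:2nd-type-face-d-dim}, whose span has dimension $n-|\stdIndex|$. For case \ref{item:case-I-facial-structure} I would first verify that $\bm f=(-\bbz/\|\bbz\|^2,\bm{\alpha}\circ\tbz^{-1})$ lies in $\gK$ --- its last $n$ coordinates $\alpha_i/\tz_i$ are positive, its first block has norm $1/\|\bbz\|$, and $\prod_i(\alpha_i/\tz_i)^{\alpha_i}=1/\prod_i(\tz_i/\alpha_i)^{\alpha_i}=1/\|\bbz\|$ --- and that $\inProd{\bm z}{\bm f}=-1+\sum_i\alpha_i=0$, so $\{t\bm f:t\ge0\}\subseteq\{\bm z\}^\perp\cap\gK$. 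Conversely, for nonzero $\bm x\in\{\bm z\}^\perp\cap\gK$, equality in the chain gives: $\bbx=-\lambda\bbz$ for some $\lambda>0$ (Cauchy--Schwarz equality; $\lambda>0$ as otherwise $\bm x=\bm 0$); the numbers $\tz_i\tx_i/\alpha_i$ are all equal to a common $c>0$ (weighted AM--GM equality, using $\alpha_i\in(0,1)$), i.e.\ $\tx_i=c\alpha_i/\tz_i$; and $\prod_i\tx_i^{\alpha_i}=\|\bbx\|$. Substituting $\tx_i=c\alpha_i/\tz_i$ into the last identity and using $\|\bbx\|=\lambda\|\bbz\|$ and $\prod_i(\tz_i/\alpha_i)^{\alpha_i}=\|\bbz\|$ pins down $c=\lambda\|\bbz\|^2$, whence $\bm x=t\bm f$ with $t=\lambda\|\bbz\|^2>0$. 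Together with the trivial case $\bm x=\bm 0$ this proves \eqref{eq:1st-type-face-ray}, and the face is one-dimensional since $\bm f\ne\bm 0$. (Since every proper nontrivial exposed face of the pointed cone $\gK$ is $\{\bm z\}^\perp\cap\gK$ for some $\bm z\in\bd\dgK\setminus\{\bm 0\}$, the two cases exhaust all such faces.)

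I do not anticipate a real obstacle: the only points that need care are treating the degenerate subcase $\bm x=\bm 0$ in case \ref{item:case-I-facial-structure} separately (so that $\prod_i\tx_i^{\alpha_i}>0$ when the Cauchy--Schwarz and AM--GM equality characterizations are invoked), and the routine algebraic bookkeeping that recovers the scalar $t$ from the three equality conditions.
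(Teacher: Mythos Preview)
Your proposal is correct and follows essentially the same approach as the paper: both arguments rest on the chain combining Cauchy--Schwarz on the $\bbx$-block with weighted AM--GM on the $\tbx$-block, and then read off the equality conditions. The only differences are organizational---you package everything as a single chain $\inProd{\bm z}{\bm x}\ge\cdots\ge 0$ and invoke the AM--GM equality characterization directly, whereas the paper writes the chain as $\sum\tz_i\tx_i\le\|\bbz\|\|\bbx\|\le\prod(\tx_i\tz_i/\alpha_i)^{\alpha_i}$, first establishes $\tx_i\tz_i>0$, and then passes through $\ln$ and strict concavity; you also verify explicitly that $\bm f\in\{\bm z\}^\perp\cap\gK$, which the paper leaves implicit. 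One small expositional point: your claim ``$\lambda>0$ as otherwise $\bm x=\bm 0$'' is correct but not quite self-contained---$\lambda=0$ gives only $\bbx=\bm 0$, and you then need either $\sum_i\tz_i\tx_i=0$ (from $\inProd{\bm z}{\bm x}=0$ and $\tz_i>0$) or the other equalities in the chain to force $\tbx=\bm 0$ as well.
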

	}

	\begin{proof}
	  \ref{item:case-I-facial-structure}: Notice that \( \bm{x} = (\bbx,\tbx)\in \{\bm{z}\}^\perp\cap \gK\backslash \{\bm{0}\} \) if and only if \( \bm{x}\in \bd\gK \), \( \bm{x} \neq \bm{0} \) and 
	  \begin{equation}\label{eq:inprod=0-case-I}
		\inProd{\bbz}{\bbx} + \inProd{ \tbz}{\tbx} = 0.
	  \end{equation}
	  The above relation yields
	  \begin{equation}\label{eq:inequ-sum-prod-power-case-I}
		\sum_{i=1}^n \tz_i\tx_i = -\inProd{\bbz}{\bbx}\le \|\bbz\| \|\bbx\| \le \prod_{i=1}^n\left(\frac{\tx_i\tz_i}{\alpha_i}\right)^{\alpha_i},
	  \end{equation}
	  where the last inequality follows from the definition of $\gK$ in \eqref{eq:gpowercones}.

	  Note that \( \tx_i \) cannot be \textbf{all} zero, for otherwise \( \bbx \) will also be zero since \( \bm{x}\in \bd\gK \), which contradicts \( \bm{x}\neq \bm{0} \). In addition, we must have \( \tz_i > 0 \) for all \( i \) because \( \bbz \neq \bm{0} \) and \( \bm{z}\in \bd\dgK\backslash\{\bm{0}\} \). Using these observations, we have $\sum_{i=1}^n \tz_i\tx_i >0$. Combining this with \eqref{eq:inequ-sum-prod-power-case-I}, we deduce that \( \tx_i\tz_i > 0 \) for all \( i \). Now we can take \( \ln \) on both sides of \eqref{eq:inequ-sum-prod-power-case-I} to obtain
	  \begin{equation}\label{eq:log-inequ-sum-prod-power-case-I}
		\ln\left[\sum_{i=1}^n \alpha_i\left(\frac{\tx_i\tz_i}{\alpha_i}\right)\right] \le \alpha_1\ln\left(\frac{\tx_1\tz_1}{\alpha_1}\right) + \cdots + \alpha_n\ln\left(\frac{\tx_n\tz_n}{\alpha_n}\right).
	  \end{equation}
	  Using this together with the fact that \( \ln(\cdot) \) is strictly concave, we deduce that \eqref{eq:log-inequ-sum-prod-power-case-I} holds as an equality. Hence, there exists a constant \( c > 0 \) so that
	  \begin{equation}\label{eq:form-tildex-by-tildez-case-I}
		\tx_i = c \alpha_i\tz_i^{-1}\,\,\,\forall i = 1, 2, \dots , n.
	  \end{equation}
	  Plugging \eqref{eq:form-tildex-by-tildez-case-I} into \eqref{eq:inprod=0-case-I}, we obtain
	  \begin{equation}\label{eq:inprod-barz-barx=c-case-I}
		\inProd{\bbz}{\bbx} = -\inProd{ \tbz}{\tbx} = -c\sum_{i=1}^n \alpha_i = -c.
	  \end{equation}
	  Moreover, using \eqref{eq:form-tildex-by-tildez-case-I} and the last relation in \eqref{eq:inequ-sum-prod-power-case-I}, we see that
	  \[
		\|\bbz\| \|\bbx\| \le \prod_{i=1}^n\left(\frac{\tx_i\tz_i}{\alpha_i}\right)^{\alpha_i} = c.
	  \]
	  {The two displayed lines} above show that \( \|\bbz\| \|\bbx\| = -\inProd{\bbz}{\bbx} \), which together with \( \bbz \neq \bm{0} \) implies that there exists \( \kappa > 0 \) so that
	  \begin{equation}\label{eq:form-barx-by-barz-kappa}
		\bbx = -\kappa\bbz.
	  \end{equation}
	  Plugging \eqref{eq:form-barx-by-barz-kappa} into \eqref{eq:inprod-barz-barx=c-case-I}, we obtain that \( \kappa = c/\|\bbz\|^2 \). Using this together with \eqref{eq:form-tildex-by-tildez-case-I} and \eqref{eq:form-barx-by-barz-kappa}, we can now conclude that
	  \[
		\Fr := \{\bm{z}\}^\perp\cap \gK =  \{t \bm{f}\in \R^{m+n}\,| \,t\ge 0\}\,\,\mbox{with} \,\bm{f}= (-\bbz/\|\bbz\|^2,\bm{\alpha}\circ \tbz^{-1}),
	  \]
	  where the inverse is taken componentwise.

	  \ref{item:case-II-facial-structure}: In this case, \( \bbz = \bm{0} \). Then \( \stdIndex:= \{i\,| \,\tz_i > 0\} \) is nonempty because \( \bm{z}\neq \bm{0} \). Hence, \( \bm{x} = (\bbx,\tbx)\in \{\bm{z}\}^\perp\cap \gK\backslash \{\bm{0}\} \) if and only if \( \bm{x}\in \bd\gK\backslash\{\bm{0}\} \) and satisfies
	  \begin{equation*}
		\sum_{i\in \stdIndex} \tz_i\tx_i = 0.
	  \end{equation*}
	  This means that \( \tx_i = 0 \) whenever \( i\in \stdIndex \) and hence \( \bbx= \bm{0} \). Thus,
	  \[
		\Fhatz := \{\bm{z}\}^\perp \cap \gK = \{\bm{x} = (\bbx,\tbx)\in \R_+^{m+n}\,| \,\bbx = \bm{0},\,\tx_i = 0\ \forall i\in \stdIndex\}.
	  \]
	\end{proof}

	Having characterized the proper exposed faces of $\gK$, we will show that $\gK$ is \emph{projectionally exposed} \cite{BW81,ST90}, which means that for every face $\stdFace$ of $\gK$ there is a linear operator ${\bm P}$ satisfying ${\bm P}(\gK) = \stdFace$ and ${\bm P}^2 = {\bm P}$. In particular, ${\bm P}$, which depends on $\stdFace$, is a projection that is not necessarily orthogonal.
Projectionally exposed cones are both facially exposed \cite[Corollary~4.4]{ST90} and amenable \cite[Proposition~9]{L17}, see also \cite{LRS20}.
	\begin{proposition}[Generalized power cones are projectionally exposed]
	$\gK$ is projectionally exposed, in particular, all its faces are exposed. 
	\end{proposition}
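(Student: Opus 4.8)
The plan is to exhibit, for every face \( \stdFace \) of \( \gK \), an idempotent linear operator \( \bm{P} \) with \( \bm{P}(\gK) = \stdFace \); projectional exposedness then follows by definition, and the ``in particular'' clause is immediate since projectionally exposed cones are facially exposed \cite[Corollary~4.4]{ST90}. The first step is to enumerate all faces of \( \gK \). As \( \gK \) is a pointed closed convex cone, every face equals the minimal face \( F(\bm{x}) \) of any of its relative interior points \( \bm{x} \), so it suffices to compute \( F(\bm{x}) \) for an arbitrary \( \bm{x} = (\bbx,\tbx) \in \gK \). If \( \bm{x} \in {\rm int}\,\gK \) then \( F(\bm{x}) = \gK \), and if \( \bm{x} = \bm{0} \) then \( F(\bm{x}) = \{\bm{0}\} \). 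Otherwise \( \bm{x} \in \bd\gK\setminus\{\bm{0}\} \), and there are exactly two possibilities. If \( \tx_i = 0 \) for some \( i \), then (because \( \bm{x}\in\bd\gK \)) necessarily \( \bbx = \bm{0} \), and with \( \stdIndex := \{i \,|\, \tx_i = 0\}\neq\emptyset \) one checks that \( \bm{x} \) lies in the relative interior of the face \( \Fhatz \) from Proposition~\ref{prop:facial-stru-gK}\ref{item:case-II-facial-structure}, so \( F(\bm{x}) = \Fhatz \). If instead \( \tx_i > 0 \) for every \( i \), then \( \|\bbx\| = \prod_i\tx_i^{\alpha_i} > 0 \), so \( \bbx\neq\bm{0} \); setting \( \bm{z} := (-\bbx/\|\bbx\|^2,\ \bm{\alpha}\circ\tbx^{-1}) \) one verifies directly from \eqref{eq:gpowercones} that \( \bm{z}\in\bd\dgK\setminus\{\bm{0}\} \) with \( \bbz\neq\bm{0} \), and that the ray \( \bm{f} \) it exposes via Proposition~\ref{prop:facial-stru-gK}\ref{item:case-I-facial-structure} equals \( \bm{x} \); since a nonzero subcone of a ray is the whole ray, \( F(\bm{x}) = \Fr = \{t\bm{x} \,|\, t\ge 0\} \). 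Hence the faces of \( \gK \) are precisely \( \gK \), \( \{\bm{0}\} \), the rays \( \Fr \) of \eqref{eq:1st-type-face-ray}, and the faces \( \Fhatz \) of \eqref{eq:2nd-type-face-d-dim}.

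Next I construct the idempotents. For \( \gK \) take \( \bm{P} = \bm{I}_{m+n} \) and for \( \{\bm{0}\} \) take \( \bm{P} = \bm{0} \). For \( \Fhatz \) with index set \( \stdIndex \), take \( \bm{P} = \diag(\bm{d}) \), where \( \bm{d}\in\R^{m+n} \) has \( d_{m+i} = 1 \) for \( i\notin\stdIndex \) and all other entries \( 0 \); clearly \( \bm{P}^2 = \bm{P} \), and since \( \bm{P} \) maps \( (\bbx,\tbx) \) to \( (\bm{0},\tbx') \) with \( \tx_i' = \tx_i \) for \( i\notin\stdIndex \) and \( \tx_i' = 0 \) otherwise, checking against \eqref{eq:gpowercones} gives \( \bm{P}(\gK) = \Fhatz \) (this also covers \( \{\bm{0}\} \), taking \( \stdIndex = \{1,\dots,n\} \)). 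For a ray face \( \Fr \) with \( \bm{f} = (-\bbz/\|\bbz\|^2,\ \bm{\alpha}\circ\tbz^{-1}) \) as in \eqref{eq:1st-type-face-ray}, put \( \bm{g} := (\bm{0},\tbz) \). Then \( \bm{g}\in\dgK \) (trivially, as \( \|\bm{0}\|\le\prod_i(\tz_i/\alpha_i)^{\alpha_i} \) and \( \tbz\in\R^n_+ \)), and \( \inProd{\bm{g}}{\bm{f}} = \sum_{i=1}^n\tz_i\cdot\alpha_i\tz_i^{-1} = \sum_{i=1}^n\alpha_i = 1 \). Define \( \bm{P} := \bm{f}\bm{g}^{\top} \), i.e.\ \( \bm{P}\bm{x} = \inProd{\bm{g}}{\bm{x}}\,\bm{f} \). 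Then \( \bm{P}^2 = \bm{f}\,(\bm{g}^{\top}\bm{f})\,\bm{g}^{\top} = \bm{f}\bm{g}^{\top} = \bm{P} \), and since \( \inProd{\bm{g}}{\bm{x}}\ge 0 \) for all \( \bm{x}\in\gK \) (as \( \bm{g}\in\dgK \)) while \( \inProd{\bm{g}}{t\bm{f}} = t \) for every \( t\ge 0 \), we obtain \( \bm{P}(\gK) = \{t\bm{f} \,|\, t\ge 0\} = \Fr \).

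Having produced an idempotent linear map from \( \gK \) onto each of its faces, we conclude that \( \gK \) is projectionally exposed, hence facially exposed, which is the claim. The step demanding genuine care --- and the one I would flag as the main obstacle --- is the face classification: verifying that when all \( \tx_i>0 \) the minimal face is really the one-dimensional \( \Fr \) (which hinges on Proposition~\ref{prop:facial-stru-gK} pinning down the exposed face of the explicit dual vector \( \bm{z} \) as a single ray through \( \bm{x} \)), and that when some \( \tx_i = 0 \) the point \( \bm{x} \) lies in the relative interior of \( \Fhatz \) rather than a strictly smaller face. Everything else --- idempotency of the constructed maps and the identification of their images with the claimed faces --- is a routine check against \eqref{eq:gpowercones}.
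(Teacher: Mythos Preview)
Your proof is correct but takes a somewhat different route from the paper's. The paper invokes \cite[Corollary~4.5]{ST90}, which says that a cone is projectionally exposed as soon as all its \emph{exposed} faces are; this reduces the task to handling only the faces already listed in Proposition~\ref{prop:facial-stru-gK}, and facial exposedness is then deduced \emph{a posteriori} from projectional exposedness via \cite[Corollary~4.4]{ST90}. You instead classify \emph{all} faces of $\gK$ directly via a minimal-face argument, showing that every face coincides with one of the exposed faces in Proposition~\ref{prop:facial-stru-gK}; this establishes facial exposedness as a byproduct before you even construct the projections. Your approach is more self-contained (it does not rely on the Sung--Tam reduction), while the paper's is shorter because it bypasses the face classification entirely. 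The projections themselves are essentially the same in both arguments; for the ray faces you give an explicit dual vector $\bm{g} = (\bm{0},\tbz)$ with $\inProd{\bm{g}}{\bm{f}} = \sum_i \alpha_i = 1$, whereas the paper simply asserts such a vector exists by noting $\bm{f} \notin (\dgK)^{\perp} = \{\bm{0}\}$.
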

	\begin{proof}
	Sung and Tam proved in \cite[Corollary~4.5]{ST90} that a sufficient condition for a cone to be projectionally exposed is that all its exposed faces are projectionally exposed.
	With this in mind, let $\stdFace$ be an exposed face of $\gK$. If $\stdFace = \{\bm{0}\}$ or $\stdFace = \gK$, then the zero map and the identity map are, respectively, projections mapping $\gK$ to $\stdFace$.
	Otherwise, $\stdFace$ is a nonzero proper face of $\gK$ and is of the form $\{\bm{z}\}^\perp \cap \gK $, for some \( \bm{z} = (\bbz,\tbz)\in \bd\dgK\backslash\{\bm{0}\}\).
	By the analysis in cases \ref{item:case-I-facial-structure}, \ref{item:case-II-facial-structure}, we only need to consider two cases.
	
	First, suppose that $\stdFace$ is a one-dimensional face as in \eqref{eq:1st-type-face-ray} and let $\bm{u} \in \dgK$ be such that $	\inProd{\bm{f}}{\bm{u}} = 1$. At least one such $\bm{u}$ exists, since otherwise we would have $\bm{f} \in (\dgK)^\perp = \{\bm{0}\}$. Then, ${\bm P} = \bm{f} \bm{u}^\top$ satisfies ${\bm P}^2 = {\bm P}$ and ${\bm P}(\gK) = \stdFace$ as required.
	
	Next, suppose that $\stdFace$ is as in \eqref{eq:2nd-type-face-d-dim}. Then, we let ${\bm P}$ be the linear map that maps $(\bbx, \tbx)$ to $(\bm{0},\tby)$ where $\ty_i = 0$ if $i \in \stdIndex$ and $\ty_i = \tx_i$ if $i \not \in \stdIndex$. With that, ${\bm P}$ is a projection mapping $\gK$ to $\stdFace$.
	\end{proof}

	\subsection[Deducing error bounds and one-step facial residual functions for the power cone]{Deducing error bounds and one-step facial residual functions for \( \gK \)}\label{subsec:error-bound-1-FRF-power-cone}

	We start with the faces \( \Fr \) that correspond to a \( \bm{z}\in \bd\dgK\backslash\{\bm{0}\} \) with \( \bbz \neq \bm{0} \).
    {We have the following result.}
	\begin{theorem}\label{thm:Holderian-1/2-1st-type-face}
		Let \( \bm{z} = (\bbz,\tbz)\in \bd\dgK\backslash\{\bm{0}\} \) with \( \bbz\neq \bm{0} \) and let \( \Fr := \{\bm{z}\}^\perp\cap \gK \). Let \( \eta > 0 \) and define
		\begin{equation}\label{eq:gamma-1/2-1st-type-face}
			\gamma_{\bm{z},\eta}\! :=\! \inf_{\bm{v}}\left\{\frac{\|\bm{v} - \bm{w}\|^\frac12}{\|\bm{u} - \bm{w}\|}\,\bigg| \,
\begin{array}{c}
\bm{v}\in \bd \gK\cap \B(\eta)\backslash \Fr,\,\bm{w} = P_{\{\bm{z}\}^\perp}(\bm{v}),\\
\bm{u} = P_{\Fr}(\bm{w}),\,\bm{u} \neq \bm{w}
\end{array}
\right\}.
		\end{equation}
		Then it holds that \( \gamma_{\bm{z},\eta}\in (0,\infty] \) and that
		\[
		\dist(\bm{q},\Fr)\le \max\{2\sqrt{\eta},2\gamma_{\bm{z},\eta}^{-1}\}\cdot\dist(\bm{q},\gK)^{\frac{1}{2}}\,\,\text{ whenever }  \bm{q} \in \{\bm{z}\}^\perp\cap \B(\eta) .
		\]
	\end{theorem}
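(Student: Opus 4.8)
The plan is to treat the two assertions in turn: first $\gamma_{\bm z,\eta}\in(0,\infty]$, and then the H\"olderian bound, which is a routine consequence of $\gamma_{\bm z,\eta}>0$ together with elementary projection inequalities. I dispose of the latter first. Fix $\bm q\in\{\bm z\}^\perp\cap\B(\eta)$ and set $\bm v:=P_\gK(\bm q)$; since $P_\gK$ is firmly nonexpansive and $\bm 0\in\gK$, we have $\|\bm v\|\le\|\bm q\|\le\eta$. If $\bm q\in\gK$ then $\bm q\in\{\bm z\}^\perp\cap\gK=\Fr$ and there is nothing to prove; otherwise $\bm v\in\bd\gK\cap\B(\eta)$. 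Put $\bm w:=P_{\{\bm z\}^\perp}(\bm v)$ and $\bm u:=P_\Fr(\bm w)$. From $\bm z\in\dgK$, $\bm v\in\gK$ and $\bm q\in\{\bm z\}^\perp$ we get $\inProd{\bm z}{\bm v}\ge 0=\inProd{\bm z}{\bm q}$, hence $\|\bm v-\bm w\|=\inProd{\bm z}{\bm v-\bm q}/\|\bm z\|\le\dist(\bm q,\gK)$, and $\|\bm q-\bm w\|=\|P_{\{\bm z\}^\perp}(\bm q-\bm v)\|\le\dist(\bm q,\gK)$. If $\bm v\in\Fr$, or if $\bm u=\bm w$ (so $\bm w\in\Fr$), then $\dist(\bm q,\Fr)\le\dist(\bm q,\gK)$; otherwise $(\bm v,\bm w,\bm u)$ is admissible in the infimum defining $\gamma_{\bm z,\eta}$, so $\|\bm u-\bm w\|\le\gamma_{\bm z,\eta}^{-1}\|\bm v-\bm w\|^{1/2}$ and $\dist(\bm q,\Fr)\le\|\bm q-\bm w\|+\|\bm w-\bm u\|\le\dist(\bm q,\gK)+\gamma_{\bm z,\eta}^{-1}\dist(\bm q,\gK)^{1/2}$. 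Since $\dist(\bm q,\gK)\le\|\bm q\|\le\eta$ gives $\dist(\bm q,\gK)\le\sqrt\eta\,\dist(\bm q,\gK)^{1/2}$ and $\sqrt\eta+\gamma_{\bm z,\eta}^{-1}\le\max\{2\sqrt\eta,2\gamma_{\bm z,\eta}^{-1}\}$, the claimed inequality follows in every case.

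It remains to show $\gamma_{\bm z,\eta}>0$ (if the infimum is over the empty set then $\gamma_{\bm z,\eta}=\infty$). The key ingredient is a \emph{local} estimate at each $\bar{\bm v}=\bar t\bm f\in\ri\Fr$ ($\bar t>0$): there are a neighborhood $N$ of $\bar{\bm v}$ and $c>0$ with $\|\bm v-\bm w\|^{1/2}/\|\bm u-\bm w\|\ge c$ for every $\bm v\in\bd\gK\cap N\setminus\Fr$ such that $\bm u\neq\bm w$. To establish it, use that for $\bm v=(\bbv,\tbv)$ near $\bar{\bm v}$ one has $\tbv\in{\rm int}\,\R^n_+$ and $\bbv\neq\bm 0$, so locally $\bd\gK=\{\,\|\bbv\|=\prod_j\tv_j^{\alpha_j}\,\}$. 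Writing $\bm\omega:=\tbv/\prod_j\tv_j^{\alpha_j}$ (so $\prod_i\omega_i^{\alpha_i}=1$) and $\bm\zeta:=-\tbz/\|\bbz\|$ (which lies in ${\rm int}\,\R^n_-$ and, because $\bm z\in\bd\dgK$, satisfies $\prod_i(-\zeta_i/\alpha_i)^{\alpha_i}=1$), the elementary identity $\inProd{\bbz}{\bbv}+\|\bbz\|\|\bbv\|=\tfrac12\|\bbz\|\|\bbv\|\,\big\|\bbz/\|\bbz\|+\bbv/\|\bbv\|\big\|^2$, combined with $\|\bbv\|=\prod_j\tv_j^{\alpha_j}$, expresses $\inProd{\bm z}{\bm v}$ as $\|\bbz\|\big(\prod_j\tv_j^{\alpha_j}\big)\big(-1-\inProd{\bm\zeta}{\bm\omega}\big)$ plus the nonnegative term $\tfrac12\|\bbz\|\big(\prod_j\tv_j^{\alpha_j}\big)\big\|\bbz/\|\bbz\|+\bbv/\|\bbv\|\big\|^2$. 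Since $\bm\omega\to\widetilde{\bm\zeta}:=\|\bbz\|\,\bm\alpha\circ\tbz^{-1}$ as $\bm v\to\bar{\bm v}$, Lemma~\ref{lemma:inequ-inprod-power-cone} bounds the first term below by a positive multiple of $\|\bm\omega-\widetilde{\bm\zeta}\|^2$ on a smaller neighborhood; comparing with the direct expansion $\dist(\bm v,\Fr)^2\le\big\|\bm v-\|\bbz\|\big(\prod_j\tv_j^{\alpha_j}\big)\bm f\big\|^2=\|\bbv\|^2\big\|\bbz/\|\bbz\|+\bbv/\|\bbv\|\big\|^2+\big(\prod_j\tv_j^{\alpha_j}\big)^2\|\bm\omega-\widetilde{\bm\zeta}\|^2$ (valid because $\|\bbz\|\big(\prod_j\tv_j^{\alpha_j}\big)\bm f\in\Fr$) yields $\dist(\bm v,\Fr)^2\le c_2\inProd{\bm z}{\bm v}$ near $\bar{\bm v}$. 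Finally $\inProd{\bm z}{\bm v}=\|\bm z\|\,\|\bm v-\bm w\|$, and since $\inProd{\bm f}{\bm v}>0$ near $\ri\Fr$ with $\bm f\in\{\bm z\}^\perp$ forces $\bm u=P_{\Span\Fr}(\bm w)=P_\Fr(\bm v)$ by Lemma~\ref{lemma:Lemma-2.11}, we obtain $\|\bm u-\bm w\|\le\dist(\bm v,\Fr)+\|\bm v-\bm w\|\le\big(\sqrt{c_2\|\bm z\|}+\sqrt{M}\,\big)\|\bm v-\bm w\|^{1/2}$, where $M$ bounds $\|\bm v\|$ on $N$; this is the local estimate.

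To globalize, assume $\gamma_{\bm z,\eta}=0$ and pick admissible $\bm v_k$ with $r_k:=\|\bm v_k-\bm w_k\|^{1/2}/\|\bm u_k-\bm w_k\|\to 0$. As $\|\bm u_k-\bm w_k\|=\dist(\bm w_k,\Fr)\le\|\bm w_k\|\le\eta$ is bounded, $\|\bm v_k-\bm w_k\|\to 0$, so every subsequential limit $\bar{\bm v}$ of $\{\bm v_k\}$ lies in $\{\bm z\}^\perp\cap\gK=\Fr$, say $\bar{\bm v}=\bar t\bm f$. If $\bar t>0$, the local estimate at $\bar{\bm v}$ (applicable to $\bm v_k\in N$ eventually) contradicts $r_k\to 0$. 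If $\bar t=0$, I rescale: $\bm v_k':=(\eta/\|\bm v_k\|)\bm v_k\in\bd\gK$ has $\|\bm v_k'\|=\eta$, and because $\gK,\Fr,\{\bm z\}^\perp$ are invariant under positive scaling the associated $\bm w_k,\bm u_k$ rescale identically, so the ratio of $\bm v_k'$ equals $(\|\bm v_k\|/\eta)^{1/2}r_k\to 0$; passing to a subsequence $\bm v_k'\to\bm v'$ with $\|\bm v'\|=\eta$, the same reasoning as above forces $\bm v'=\eta\bm f/\|\bm f\|\in\ri\Fr$, and the local estimate at $\bm v'$ again gives a contradiction. Hence $\gamma_{\bm z,\eta}>0$. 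The step I expect to be the main obstacle is the local estimate near $\ri\Fr$: it amounts to showing that, on $\bd\gK$, the ``$\bm z$-slack'' $\inProd{\bm z}{\bm v}$ decreases only quadratically as $\bm v$ moves transversally off the ray $\Fr$---this is precisely where Lemma~\ref{lemma:inequ-inprod-power-cone} is used, but only after the extra degrees of freedom in the $\bbv$ component are separated out via the elementary identity above---and then the apex $\bm 0$, where the geometry is genuinely different, must be handled separately, which the scaling reduction accomplishes.
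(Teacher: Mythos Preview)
Your proof is correct, but it follows a different route from the paper's. The paper invokes \cite[Lemma~3.12 and Theorem~3.10]{LiLoPo20} for both the contradiction setup and the final error bound, and then performs an extended case analysis: it normalizes by $v_0^k=\|\bbv^k\|$, separately disposes of the degenerate case $v_0^k=0$, splits on the sign of $\inProd{\bm f}{\bm v^k}$, and within the positive-sign case further distinguishes whether $(v_0^k)^{-1}\tbv^k$ is near $\bm\alpha\circ(z_0\tbz^{-1})$ or not, using both Lemma~\ref{lemma:inequ-inprod-power-cone} and Lemma~\ref{lemma:Lemma-4.1} together with auxiliary lower-semicontinuity arguments for the ``far'' regions. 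Your argument is more geometric and shorter: the elementary identity $\inProd{\bbz}{\bbv}+\|\bbz\|\|\bbv\|=\tfrac12\|\bbz\|\|\bbv\|\big\|\bbz/\|\bbz\|+\bbv/\|\bbv\|\big\|^2$ replaces Lemma~\ref{lemma:Lemma-4.1} outright, and your rescaling trick---sending $\bm v_k$ to $(\eta/\|\bm v_k\|)\bm v_k$ when the limit is the apex---collapses what the paper treats as several separate cases (the $v_0^k=0$ case, the $\inProd{\bm f}{\bm v^k}<0$ case, and the ``far-from-$\widetilde{\bm\zeta}$'' case) into a single reduction to a neighborhood of $\ri\Fr$, where only the local quadratic estimate from Lemma~\ref{lemma:inequ-inprod-power-cone} is needed. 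The trade-off is that the paper's approach plugs directly into the general machinery of \cite{LiLoPo20} and makes the role of each auxiliary lemma explicit, while yours is self-contained and exploits the cone homogeneity of $\gK$, $\Fr$ and $\{\bm z\}^\perp$ more directly.
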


	\begin{proof}
		Suppose for a contradiction that \( \gamma_{\bm{z},\eta} = 0 \). Then, {in view of \cite[Lemma 3.12]{LiLoPo20}}, there exist \( \widehat{\bm{v}}\in \Fr \) and a sequence \( \{\bm{v}^k\}\subset \bd \gK\cap \B(\eta)\backslash \Fr \) such that
		\begin{equation}\label{eq:forcontradiction-1st-face}
			\lim_{k\to \infty}\bm{v}^k = \lim_{k\to \infty}\bm{w}^k = \widehat{\bm{v}}\,\,{\rm and}\,\,\lim_{k\to\infty}\frac{\|\bm{w}^k - \bm{v}^k\|^{\frac{1}{2}}}{\|\bm{w}^k - \bm{u}^k\|} = 0,
		\end{equation}
		where \( \bm{w}^k = P_{\{\bm{z}\}^\perp}(\bm{v}^k) \), \( \bm{u}^k = P_{\Fr}(\bm{w}^k) \) and \( \bm{u}^k\neq \bm{w}^k \).

		Define, for notational simplicity, \( z_0 := \|\bbz\| \) and \( v^k_0:=\|\bbv^k\| \).
		Then, since \( \{\bm{v}^k\}\subset \bd \gK \) and \( \bm{z} \in \bd\dgK \) with \( \bbz \neq \bm{0} \), we have\vspace{-0.15 cm}
		\begin{equation}\label{eq:z0-v0-notation}\vspace{-0.15 cm}
			z_0 = \|\bbz\| = \prod_{i=1}^n\left(\frac{\tz_i}{\alpha_i}\right)^{\alpha_i}> 0 \,\,{\rm and}\,\,v^k_0=\|\bbv^k\| = \prod_{i=1}^n(\tv_i^k)^{\alpha_i}\,\,\forall k.
		\end{equation}

		If it holds that \( v_0^k = 0 \) infinitely often, by passing to a further subsequence, we may assume that \( v_0^k = 0 \) for all \( k \). Then we have in view of {Lemma \ref{lemma:Lemma-2.11}} that\vspace{-0.15 cm}
		\[\vspace{-0.15 cm}
		\!\|\bm{v}^k - \bm{w}^k\| \!=\! \frac1{\|\bm{z}\|}|\inProd{ \tbz}{\tbv^k}|\!\overset{\rm (a)}=\! \frac1{\|\bm{z}\|}\sum_{i=1}^n\tz_i\tv^k_i\!\ge\! \frac{\min_i \tz_i}{\|\bm{z}\|} \|\tbv^k\|_1\!\ge\! \frac{\min_i \tz_i}{\|\bm{z}\|} \|\tbv^k\|
		\!\overset{\rm (b)}=\!\frac{\min_i \tz_i}{\|\bm{z}\|} \|\bm{v}^k\|,
		\]
		where (a) holds because \( \tv^k_i \ge 0 \) and \( \tz_i > 0 \) for all \( i \) (see \eqref{eq:z0-v0-notation}), and (b) holds since \( \|\bbv^k\| = 0 \). Since
		\( \|\bm{w}^k - \bm{u}^k\| = \dist(\bm{w}^k,\Fr) \le \|\bm{w}^k\|\le \|\bm{v}^k\| \) as a consequence of the properties of projections, we conclude from this and the above display that
		\( \|\bm{v}^k - \bm{w}^k\| \ge \frac{\min_i \tz_i}{\|\bm{z}\|}\|\bm{w}^k - \bm{u}^k\| \), contradicting \eqref{eq:forcontradiction-1st-face}.

		Thus, by considering a further subsequence if necessary, from now on, we assume\vspace{-0.15 cm}
		\begin{equation}\label{eq:vk0-notation-nonzero}\vspace{-0.15 cm}
			v^k_0=\|\bbv^k\| = \prod_{i=1}^n(\tv_i^k)^{\alpha_i} > 0 \,\,\forall k.
		\end{equation}
		Using {Lemma \ref{lemma:Lemma-2.11}}, we see that\vspace{-0.15 cm}
		\begin{equation}\label{eq:dist-vk-wk}\vspace{-0.15 cm}
			\begin{aligned}
				\|\bm{v}^k - \bm{w}^k\| & =\frac1{\|\bm{z}\|}|\inProd{ \bm{z}}{\bm{v}^k}| = \frac1{\|\bm{z}\|}\left|\sum_{i=1}^{m}\bz_i\bv^k_i + \sum_{i=1}^n\tz_i\tv^k_i\right|             \\
				& = \frac1{\|\bm{z}\|}\left|z_0v_0^k + \sum_{i=1}^{m}\bz_i\bv^k_i - \sum_{i=1}^n(-\tz_i)\tv^k_i - z_0v_0^k\right|                                        \\
				& = \frac{z_0v_0^k}{\|\bm{z}\|}\left| 1 + \inProd{ z_0^{-1}\bbz}{ (v_0^k)^{-1}\bbv^k} - \inProd{ z_0^{-1}(-\tbz)}{ (v_0^k)^{-1}\tbv^k} - 1\right|  \\
				& = \frac{z_0}{\|\bm{z}\|}\left( 1 + \inProd{ z_0^{-1}\bbz}{ (v_0^k)^{-1}\bbv^k} - \inProd{ z_0^{-1}(-\tbz)}{ (v_0^k)^{-1}\tbv^k} - 1\right)v_0^k,
			\end{aligned}
		\end{equation}
		where the last equality holds as \( \|z_0^{-1}\bbz\|= 1 \), \( \|(v_0^k)^{-1}\bbv^k\|= 1 \) and  \( \inProd{ z_0^{-1}\tbz}{ (v_0^k)^{-1}\tbv^k} \ge 1 \), thanks to \eqref{eq:z0-v0-notation}, \eqref{eq:vk0-notation-nonzero} and Lemma~\ref{lemma:inequ-inprod-power-cone} applied with \( \bm{\zeta} = -z_0^{-1}\tbz \).

		Let \( \bm{f} \) be defined as in \eqref{eq:1st-type-face-ray}. We consider two cases:
		\begin{enumerate}[label={(\Roman*)}]
			\item \( \inProd{ \bm{f}}{ \bm{v}^k}\ge 0 \) for all sufficiently large \( k \).
			\item \( \inProd{ \bm{f}}{ \bm{v}^k}< 0 \) infinitely often.
		\end{enumerate}

		(I): By passing to a further subsequence, we may assume that \( \inProd{ \bm{f}}{ \bm{v}^k}\ge 0 \) for all \( k \). In this case, if we define
		\[
		\bm{Q} = \bm{I}_{m + n} - \frac{\bm{z}\bm{z}^{\top}}{\|\bm{z}\|^2} - \frac{\bm{f}\bm{f}^{\top}}{\|\bm{f}\|^2},
		\]
		where \( \bm{f} \) is as in \eqref{eq:1st-type-face-ray}, then we see from {Lemma \ref{lemma:Lemma-2.11}} and \eqref{eq:z0-v0-notation} that
		\begin{equation}\label{eq:upbd-dist-uk-wk-two-parts}
			\begin{aligned}
				\|\bm{u}^k - \bm{w}^k\| & = \|\bm{Q}\bm{v}^k\| = v_0^k\left\|\bm{Q}\begin{bmatrix}
					(v_0^k)^{-1}\bbv^k \\ (v_0^k)^{-1}\tbv^k
				\end{bmatrix}\right\| \\
&\overset{\rm (a)}= v_0^k{\Bigg\|}\bm{Q}\begin{bmatrix}
					(v_0^k)^{-1}\bbv^k \\ (v_0^k)^{-1}\tbv^k
				\end{bmatrix} \!-\! \bm{Q}\underbrace{\begin{bmatrix}
						-z_0^{-1}\bbz \\ \bm{\alpha}\circ(z_0\tbz^{-1})
				\end{bmatrix}}_{z_0\bm{f}}{\Bigg\|} \\
				& \le v_0^k\left[\|(v_0^k)^{-1}\bbv^k + z_0^{-1}\bbz\| + \|(v_0^k)^{-1}\tbv^k - \bm{\alpha}\circ(z_0\tbz^{-1})\|\right],
			\end{aligned}
		\end{equation}
		where (a) holds because 
		\( \bm{Q}\bm{f} = \bm{0} \) (an identity which is clear from the definitions).

		Next, in view of \eqref{eq:z0-v0-notation}, we can apply {Lemma \ref{lemma:Lemma-4.1}} to obtain \( C_1 > 0 \) and \( \epsilon_1>0 \) so that {\eqref{eq:inequ-p=q=2}} holds with \( \overline{\bm{\zeta}} = -z_0^{-1}\bbz \), i.e.,
		\[
			1 + \inProd{ z_0^{-1}\bbz}{ \bm{\omega}}\ge C_1\|\bm{\omega}  + z_0^{-1}\bbz\|^2
		\]
		whenever \( \|\bm{\omega}  + z_0^{-1}\bbz\|\le \epsilon_1 \) and \( \|\bm{\omega}\|=1 \). On the other hand, in view of the positivity of $1 + \inProd{ z_0^{-1}\bbz}{ \bm{\omega}}$ when $\|\bm{\omega}\|=1$ and $\bm{\omega} \neq -z_0^{-1}\bbz$ {(see Lemma \ref{lemma:Lemma-4.1})}, we know that
\[
C_2 := \inf_{\|\bm{\omega}\|=1}\{1 + \inProd{ z_0^{-1}\bbz}{ \bm{\omega}} \,|\, \|\bm{\omega}  + z_0^{-1}\bbz\|\ge \epsilon_1 \} > 0.
\]
This together with the fact $\|z_0^{-1}\bbz\|=1$ (see \eqref{eq:z0-v0-notation}) implies that
		\[
			1 + \inProd{ z_0^{-1}\bbz}{ \bm{\omega}}\ge C_2 \ge 0.25 C_2 \|\bm{\omega}  + z_0^{-1}\bbz\|^2,
		\]
		whenever \( \|\bm{\omega}  + z_0^{-1}\bbz\|\ge \epsilon_1 \) and \( \|\bm{\omega}\|=1 \). We thus have (with $C_3 \!:=\! \min\{C_1,C_2/4\}$)
\begin{equation}\label{eq:keylemmarel}
			1 + \inProd{ z_0^{-1}\bbz}{ \bm{\omega}}\ge C_3\|\bm{\omega}  + z_0^{-1}\bbz\|^2\ \ \ {\rm whenever}\ \ \|\bm{\omega}\|=1.
		\end{equation}

		In addition, noting \eqref{eq:z0-v0-notation} again, we can apply Lemma~\ref{lemma:inequ-inprod-power-cone} with \( \bm{\zeta} = -z_0^{-1}\tbz\in {\rm int}\,\R^n_- \) to obtain \( C_4 > 0 \) and \( \epsilon > 0 \) so that \eqref{eq:inequ-inprod-power-cone} holds with \( \widetilde{\bm{\zeta}} = \bm{\alpha}\circ (z_0\tbz^{-1}) \), i.e.,
		\begin{equation}\label{eq:keylemmare2}
			-1 + \inProd{ z_0^{-1}\tbz}{ \bm{\omega}}\ge C_4\|\bm{\omega}  - \bm{\alpha}\circ(z_0\tbz^{-1})\|^2
		\end{equation}
		whenever \( \|\bm{\omega}  - \bm{\alpha}\circ(z_0\tbz^{-1})\|\le \epsilon \), \( \bm{\omega}\in {\rm int}\,\R^n_+ \) and \( \prod_{i=1}^n\omega_i^{\alpha_i}=1 \).


Furthermore, consider \( h:\R^n\to \R\cup\{\infty\} \) defined by
		\begin{equation}\label{eq:def-h}
			h(\bm{\omega}) = \begin{cases}
				\displaystyle\frac{\inProd{ z_0^{-1}\tbz}{ \bm{\omega}}-1}{\|\bm{\omega} - \bm{\alpha}\circ(z_0\tbz^{-1})\|} & {\rm if}\,\|\bm{\omega}-\bm{\alpha}\circ(z_0\tbz^{-1})\|\ge \epsilon,\,\bm{\omega}\in \Upsilon, \\
				\infty                                                                                                    & {\rm otherwise},
			\end{cases}
		\end{equation}
		where \( \Upsilon= \{\bm{\omega}\in \R^n_+\,| \,\prod_{i=1}^n \omega_i^{\alpha_i}=1\} \). Then we have
		\[
		\begin{aligned}
			& \liminf_{\|\bm{\omega}\|\to \infty}h(\bm{\omega}) = \liminf_{\|\bm{\omega}\|\to \infty, \bm{\omega}\in \Upsilon}\frac{\inProd{ z_0^{-1}\tbz}{ \bm{\omega}}-1}{\|\bm{\omega} - \bm{\alpha}\circ(z_0\tbz^{-1})\|} \\
			& \ge \liminf_{\|\bm{\omega}\|\to \infty,\bm{\omega}\in \R^n_+}\frac{\inProd{ z_0^{-1}\tbz}{ \bm{\omega}}-1}{\|\bm{\omega} - \bm{\alpha}\circ(z_0\tbz^{-1})\|}
			\overset{\rm (a)}{\ge} \inf_{\|\bm{\lambda}\|=1,\bm{\lambda}\in \R^n_+}\inProd{ z_0^{-1}\tbz}{\bm{\lambda}} \overset{\rm (b)}\ge \min_{1\le i\le n}z_0^{-1}\tz_i > 0.
		\end{aligned}
		\]
		Here (a) may be verified by multiplying both numerator and denominator of the left side by $1/\|\bm{\omega}\|$; (b) holds because \( z_0^{-1}\tz_i > 0 \) for all \( i \) (see \eqref{eq:z0-v0-notation}). Since \( h \) in \eqref{eq:def-h} is also lower semicontinuous on any compact set and {\bf is always positive},\footnote{The positivity can be seen by applying Lemma~\ref{lemma:inequ-inprod-power-cone} with $\bm\zeta = -z_0^{-1}\tbz$.} it must then hold that \( C_5:= \inf h > 0 \). In particular, this means that
		\begin{equation}\label{eq:keylemmare4}
			-1 + \inProd{ z_0^{-1}\tbz}{ \bm{\omega}}\ge C_5\|\bm{\omega}  - \bm{\alpha}\circ(z_0\tbz^{-1})\|
		\end{equation}
		whenever \( \|\bm{\omega}  - \bm{\alpha}\circ(z_0\tbz^{-1})\|\ge \epsilon \), \( \bm{\omega}\in {\rm int}\,\R^n_+ \) and \( \prod_{i=1}^n\omega_i^{\alpha_i}=1 \).

		By passing to suitable subsequences, we will end up with one of the following two cases:
		\begin{enumerate}[label={Case \arabic*:}]
			\item \( \|(v_0^k)^{-1}\tbv^k - \bm{\alpha}\circ(z_0\tbz^{-1})\| \le \epsilon \) for all \( k \). Then we have from \eqref{eq:keylemmarel} (with $\bm{\omega}=(v_0^k)^{-1}\bbv^k$) and \eqref{eq:keylemmare2} (with $\bm{\omega}=(v_0^k)^{-1}\tbv^k$) that for these \( k \)
			\begin{equation*}
				\begin{aligned}
					& 1 + \inProd{ z_0^{-1}\bbz}{ (v_0^k)^{-1}\bbv^k} - \inProd{ z_0^{-1}(-\tbz)}{ (v_0^k)^{-1}\tbv^k} - 1          \\
					& \ge \min\{C_3,C_4\}(\|(v_0^k)^{-1}\bbv^k  + z_0^{-1}\bbz\|^2 + \|(v_0^k)^{-1}\tbv^k - \bm{\alpha}\circ(z_0\tbz^{-1})\|^2).
				\end{aligned}
			\end{equation*}
			Combining this with \eqref{eq:dist-vk-wk} and \eqref{eq:upbd-dist-uk-wk-two-parts}, we see further that
			\[
			\begin{aligned}
				&\|\bm{v}^k - \bm{w}^k\|\\ & \ge \frac{z_0}{\|\bm{z}\|}\min\{C_3,C_4\}(\|(v_0^k)^{-1}\bbv^k  +  z_0^{-1}\bbz\|^2 + \|(v_0^k)^{-1}\tbv^k - \bm{\alpha}\circ(z_0\tbz^{-1})\|^2) v_0^k    \\
				& \ge \frac{z_0}{2\|\bm{z}\|}\min\{C_3,C_4\}(\|(v_0^k)^{-1}\bbv^k  +  z_0^{-1}\bbz\| + \|(v_0^k)^{-1}\tbv^k - \bm{\alpha}\circ(z_0\tbz^{-1})\|)^2 v_0^k     \\
				& \ge \frac{z_0\min\{C_3,C_4\}}{2\|\bm{z}\|v_0^k}\|\bm{u}^k - \bm{w}^k\|^2\ge \frac{z_0\min\{C_3,C_4\}}{2\|\bm{z}\|\eta}\|\bm{u}^k - \bm{w}^k\|^2,
			\end{aligned}
			\]
			where the last inequality holds because \( \bm{v}^k\in \B(\eta) \). The above display contradicts \eqref{eq:forcontradiction-1st-face} and hence Case 1 cannot happen.

			\item \( \|(v_0^k)^{-1}\tbv^k - \bm{\alpha}\circ(z_0\tbz^{-1})\|\ge \epsilon \) for all \( k \). Then we have from \eqref{eq:keylemmarel} and \eqref{eq:keylemmare4} that for these \( k \)
			\begin{equation*}
				\begin{aligned}
					& 1 + \inProd{ z_0^{-1}\bbz}{ (v_0^k)^{-1}\bbv^k} - \inProd{ z_0^{-1}(-\tbz)}{ (v_0^k)^{-1}\tbv^k} - 1        \\
					& \ge \min\{C_3,C_5\}(\|(v_0^k)^{-1}\bbv^k  + z_0^{-1}\bbz\|^2 + \|(v_0^k)^{-1}\tbv^k - \bm{\alpha}\circ(z_0\tbz^{-1})\|).
				\end{aligned}
			\end{equation*}
			Using this together with \eqref{eq:dist-vk-wk}, we deduce that for all large \( k \),
			\[
			\!\|\bm{v}^k - \bm{w}^k\| \!\ge\! \frac{z_0\min\{C_3,C_5\}}{\|\bm{z}\|}(\|(v_0^k)^{-1}\bbv^k  +  z_0^{-1}\bbz\|^2 + \|(v_0^k)^{-1}\tbv^k - \bm{\alpha}\circ(z_0\tbz^{-1})\|) v_0^k.
			\]
			This implies that
			\begin{equation}\label{eq:upbd-part-dist-uk-wk-case-2}
				\begin{aligned}
					\|(v_0^k)^{-1}\bbv^k  +  z_0^{-1}\bbz\|            & \le M_1\sqrt{(v_0^k)^{-1}\|\bm{v}^k - \bm{w}^k\|}, \\
					\|(v_0^k)^{-1}\tbv^k - \bm{\alpha}\circ(z_0\tbz^{-1})\| & \le M_1(v_0^k)^{-1}\|\bm{v}^k - \bm{w}^k\|,
				\end{aligned}
			\end{equation}
			where \( M_1 := \max\left\{\left(\frac{z_0}{\|\bm{z}\|}\min\{C_3,C_5\}\right)^{-1},\left(\frac{z_0}{\|\bm{z}\|}\min\{C_3,C_5\}\right)^{-1/2}\right\} \). Using \eqref{eq:upbd-part-dist-uk-wk-case-2} together with \eqref{eq:upbd-dist-uk-wk-two-parts}, we obtain that
			\begin{equation}
			\begin{aligned}\label{eqn:3Ms}
				\|\bm{u}^k - \bm{w}^k\| & \le M_1v_0^k\left[\sqrt{(v_0^k)^{-1}\|\bm{v}^k - \bm{w}^k\|} + (v_0^k)^{-1}\|\bm{v}^k - \bm{w}^k\|\right]   \\
				&\stackrel{\rm (a)}{\le}M_1\sqrt{\eta} \sqrt{\|\bm{v}^k-\bm{w}^k\|} + M_1\|\bm{v}^k - \bm{w}^k\| \\
				&\stackrel{\rm (b)}{\le} 3M_1\sqrt{\eta}\sqrt{\|\bm{v}^k - \bm{w}^k\|},
			\end{aligned}
			\end{equation}
			where (a) holds since \( \bm{v}^k\in \B(\eta) \) wherefore $v_0^k \leq \eta$, and (b) holds because \(\|\bm{w}^k\| \leq \|\bm{v}^k\| \le \eta \) (because the projection onto $\stdCone$ is nonexpansive and $\bm 0\in \stdCone$), wherefore
			$$
			\|\bm{w}^k-\bm{v}^k\| = \sqrt{\|\bm{w}^k-\bm{v}^k\|}\sqrt{\|\bm{w}^k-\bm{v}^k\|} \leq 2\sqrt{\eta}\sqrt{\|\bm{w}^k-\bm{v}^k\|}.
			$$
			Altogether, \eqref{eqn:3Ms} contradicts \eqref{eq:forcontradiction-1st-face} and hence Case 2 cannot happen.
		\end{enumerate}
		Summarizing the above discussions, we see that Case (I) cannot happen.

		(II):  By passing to a further subsequence, we may assume that \( \inProd{ \bm{f}}{ \bm{v}^k}< 0 \) for all \( k \). This together with the definition of \( \bm{f} \) gives
		\[
		\frac{v_0^k}{z_0}[\inProd{ -z_0^{-1}\bbz}{ (v_0^k)^{-1}\bbv^k} + \inProd{ \bm{\alpha}\circ(z_0\tbz^{-1})}{ (v_0^k)^{-1}\tbv^k}] < 0.
		\]
		Since $\frac{v_0^k}{z_0} > 0$, we deduce that $\inProd{ -z_0^{-1}\bbz}{ (v_0^k)^{-1}\bbv^k} + \inProd{ \bm{\alpha}\circ(z_0\tbz^{-1})}{ (v_0^k)^{-1}\tbv^k} < 0$ for all $k$. Then it must hold that
		\[
		\lim_{k\to \infty}\|(v_0^k)^{-1}\bbv^k + z_0^{-1}\bbz\|+\|(v_0^k)^{-1}\tbv^k - \bm{\alpha}\circ(z_0\tbz^{-1})\|\neq 0;
		\]
otherwise, we have $(v_0^k)^{-1}\bbv^k\to -z_0^{-1}\bbz$ and $(v_0^k)^{-1}\tbv^k\to \bm{\alpha}\circ(z_0\tbz^{-1})$, which further gives $\inProd{ -z_0^{-1}\bbz}{ (v_0^k)^{-1}\bbv^k} + \inProd{ \bm{\alpha}\circ(z_0\tbz^{-1})}{ (v_0^k)^{-1}\tbv^k}\to \|z_0^{-1}\bbz\|^2+\|\bm{\alpha}\circ(z_0\tbz^{-1})\|^2 = \|z_0\bm f\|^2 > 0$, a contradiction.

		Consequently, there exists $\epsilon > 0$ such that for all sufficiently large $k$,
		\begin{equation}\label{eq:sum-out-of-neighbor}
			\|(v_0^k)^{-1}\bbv^k + z_0^{-1}\bbz\|+\|(v_0^k)^{-1}\tbv^k - \bm{\alpha}\circ(z_0\tbz^{-1})\| \ge \epsilon.
		\end{equation}
		Consider the function \( G:\R^{m+n}\to \R\cup\{\infty\} \) defined by
		\[
		G(\bm{\xi},\bm{\omega}) := \begin{cases}
			\displaystyle \frac{|\inProd{ z_0^{-1}\bbz}{ \bm{\xi}} + \inProd{ z_0^{-1}\tbz}{ \bm{\omega}}|}{\sqrt{1+\|\bm{\omega}\|^2}} & {\rm if}\,(\bm{\xi},\bm{\omega})\in \Xi,\,\|\bm{\xi}\|=1,\,{\rm and}\,\bm{\omega}\in \Upsilon, \\
			\infty                                                                                                                       & {\rm otherwise},
		\end{cases}
		\]
		where \( \Upsilon = \{\bm{\omega}\in \R^n_+\,| \,\prod_{i=1}^n \omega_i^{\alpha_i}=1\} \) and \( \Xi= \{(\bm{\xi},\bm{\omega})\,| \,\|\bm{\xi} + z_0^{-1}\bbz\|+\|\bm{\omega} - \bm{\alpha}\circ(z_0\tbz^{-1})\| \ge \epsilon\} \).
		Since \( \inProd{ z_0^{-1}\bbz}{ \bm{\xi}} + \inProd{ z_0^{-1}\tbz}{ \bm{\omega}} = 1+\inProd{ z_0^{-1}\bbz}{ \bm{\xi}} - \inProd{ z_0^{-1}(-\tbz)}{ \bm{\omega}}-1 \), we see from \eqref{eq:z0-v0-notation}, \eqref{eq:sum-out-of-neighbor}, {Lemma \ref{lemma:Lemma-4.1}} and Lemma~\ref{lemma:inequ-inprod-power-cone} that \( G \) is never zero. Moreover, it is clearly lower semicontinuous on any compact set, and
		\[
		\begin{aligned}
			\!\liminf_{\|(\bm{\xi},\bm{\omega})\|\to \infty}G(\bm{\xi},\bm{\omega}) \!=\!\!\! \liminf_{\|\bm{\omega}\|\to \infty,\bm{\omega}\in \Upsilon}\!\!\frac{|\inProd{ z_0^{-1}\tbz}{ \bm{\omega}}|}{\sqrt{1+\|\bm{\omega}\|^2}}\!\overset{\rm (a)}\ge\!\!\! \inf_{\|\bm{\lambda}\|=1,\bm{\lambda}\in \R^{n}_+}\!\!|\inProd{ z_0^{-1}\tbz}{\bm{\lambda}}|\!\overset{\rm (b)}\ge\!\! \min_i |z_0^{-1}\tz_i| > 0,
		\end{aligned}
		\]
		where (a) may be verified by multiplying numerator and denominator by $1/\|\bm{\omega}\|$ and (b) holds since \( z_0^{-1}\tz_i > 0 \) for all \( i \).
		Thus, \( C_6:= \inf G > 0 \) and we have for all large \( k \),
		\[
		\begin{aligned}
			\frac{\|\bm{v}^k - \bm{w}^k\|}{\|\bm{u}^k - \bm{w}^k\|} & \overset{\rm (a)}= \frac{\|\bm{v}^k - \bm{w}^k\|}{\|\bm{w}^k\|} \overset{\rm (b)}\ge \frac{\|\bm{v}^k - \bm{w}^k\|}{\|\bm{v}^k\|} \\
&\overset{\rm (c)}=
			\frac{z_0}{\|\bm{z}\|}\frac{|\inProd{ z_0^{-1}\bbz}{ (v_0^k)^{-1}\bbv^k} + \inProd{ z_0^{-1}\tbz}{ (v_0^k)^{-1}\tbv^k}|v_0^k}{\sqrt{(v^k_0)^2 + \|\tbv^k\|^2}}                                                                                                            \\
			& = \frac{z_0}{\|\bm{z}\|}\frac{|\inProd{ z_0^{-1}\bbz}{ (v_0^k)^{-1}\bbv^k} + \inProd{ z_0^{-1}\tbz}{ (v_0^k)^{-1}\tbv^k}|}{\sqrt{1 + \|(v_0^k)^{-1}\tbv^k\|^2}}\overset{\rm (d)}\ge \frac{C_6 z_0}{\|\bm{z}\|},
		\end{aligned}
		\]
		where (a) follows from {Lemma \ref{lemma:Lemma-2.11}}, which states that \( \bm{u}^k = \bm{0} \) in Case (II), (b) holds because the projection onto the cone is nonexpansive and $\bm 0$ is in the cone, (c) follows from \eqref{eq:dist-vk-wk} and (d) follows from \eqref{eq:sum-out-of-neighbor}, \eqref{eq:vk0-notation-nonzero} and the definitions of \( G \) and \( C_6 \). The above display contradicts \eqref{eq:forcontradiction-1st-face}. Thus, Case (II) also cannot happen.

		Summarizing the above, we conclude that \eqref{eq:forcontradiction-1st-face} cannot happen. Thus, in view of {\cite[Lemma 3.12]{LiLoPo20}}, we must indeed have \( \gamma_{\bm{z},\eta} \in (0,\infty] \) and that the desired error bound follows from {\cite[Theorem 3.10]{LiLoPo20}}.
	\end{proof}

	\begin{remark}[Optimality of the error bound in Theorem~\ref{thm:Holderian-1/2-1st-type-face}]\label{remark:F-r-order-1/2}
		\!\!\!Let \( \bm{z}\!\in\! \!\bd\dgK\backslash\!\{\bm{0}\} \) with \( \bbz \neq \bm{0} \) and let \( \Fr := \{\bm{z}\}^\perp\cap \gK \). Then necessarily \( \tz_i > 0 \) for all \( i \). Moreover, we also know from the definition that \( \alpha_i > 0 \) for all \( i \). Now, consider the continuous function \( \bm{q}:(0,\alpha_1)\to \{\bm{z}\}^\perp \) defined by $\epsilon \mapsto \bm{q}_\epsilon:=(\bar{\bm{q}}_\epsilon,\widetilde{\bm{q}}_\epsilon)$ where
		\[
		\bar{\bm{q}}_\epsilon = -\bbz/\|\bbz\|^2,\,\,(\widetilde{\bm{q}}_\epsilon)_1 = (\alpha_1 - \epsilon)\tz_1^{-1},\,\,(\widetilde{\bm{q}}_\epsilon)_2 = (\alpha_2 + \epsilon)\tz_2^{-1},\,\mbox{and} \,(\widetilde{\bm{q}}_\epsilon)_i = \alpha_i \tz_i^{-1}, \,\forall i \ge 3.
		\]
		Notice that \( \bm{q}_\epsilon \) only differs from the \( \bm{f} \) in \eqref{eq:1st-type-face-ray} in two entries. One can check that \( \inProd{ \bm{z}}{\bm{q}_\epsilon} = 0 \) and \( \bm{q}_\epsilon\to \bm{f}\in \Fr\backslash\{\bm{0}\} \) as $\epsilon \downarrow 0$.
		Moreover, we have
		\[
		\begin{aligned}
			&\prod_{i=1}^n (\widetilde{\bm{q}}_\epsilon)_i^{\alpha_i}  = (\alpha_1 - \epsilon)^{\alpha_1}(\alpha_2 + \epsilon)^{\alpha_2}\tz_1^{-\alpha_1}\tz_2^{-\alpha_2}\prod_{i=3}^n\left(\frac{\alpha_i}{\tz_i}\right)^{\alpha_i}          \\
			& =  \left(1 - \frac{\epsilon}{\alpha_1}\right)^{\alpha_1}\left(1 + \frac{\epsilon}{\alpha_2}\right)^{\alpha_2}\prod_{i=1}^n\left(\frac{\alpha_i}{\tz_i}\right)^{\alpha_i}
			\overset{\rm (a)}= \left(1 - \frac{\epsilon}{\alpha_1}\right)^{\alpha_1}\left(1 + \frac{\epsilon}{\alpha_2}\right)^{\alpha_2} \|\bbz\|^{-1}                                                                                          \\
			& = \left(1 - \frac{\epsilon}{\alpha_1}\right)^{\alpha_1}\left(1 + \frac{\epsilon}{\alpha_2}\right)^{\alpha_2}\|\bar{\bm{q}}_\epsilon\|,
		\end{aligned}
		\]
		where (a) holds because \( \bm{z}\in \bd\dgK\backslash\{\bm{0}\} \) with \( \bbz\neq \bm{0} \). In view of this, if we define a continuous function \( \bm{p}:(0,\alpha_1)\to \gK \)  by $\epsilon \mapsto \bm{p}_\epsilon := (\bar{\bm{p}}_\epsilon,\widetilde{\bm{p}}_\epsilon)$ where
		\[
		\bar{\bm{p}}_\epsilon := -\left(1 - \frac{\epsilon}{\alpha_1}\right)^{\alpha_1}\left(1 + \frac{\epsilon}{\alpha_2}\right)^{\alpha_2}\frac{\bbz}{\|\bbz\|^2}\quad{\rm and}\quad \widetilde{\bm{p}}_\epsilon := \widetilde{\bm{q}}_\epsilon;
		\]
		then it is clear that \( \bm{p}_\epsilon \in \gK \), and we can compute that
		\begin{equation}\label{eq:distpower}
			\begin{aligned}
				\dist(\bm{q}_\epsilon,\gK) & \le \|\bm{q}_\epsilon - \bm{p}_\epsilon\| = \frac1{\|\bbz\|}\left|\left(1 - \frac{\epsilon}{\alpha_1}\right)^{\alpha_1}\left(1 + \frac{\epsilon}{\alpha_2}\right)^{\alpha_2} - 1\right| \\
				& = \frac1{\|\bbz\|}\left|(1 - \epsilon + O(\epsilon^2))(1 + \epsilon + O(\epsilon^2)) - 1\right| = O(\epsilon^2).
			\end{aligned}
		\end{equation}
		Next, we estimate \( \dist(\bm{q}_\epsilon,\Fr) \).
		Notice that \( \inProd{ \bm{q}_\epsilon}{\bm{f}} > 0 \) for all sufficiently small \( \epsilon \) because \( \bm{q}_\epsilon \to \bm{f} \). Hence, using the definition of \( \Fr \) and {Lemma \ref{lemma:Lemma-2.11}}, we see that
		\[
		\dist(\bm{q}_\epsilon,\Fr)^2 = \left\|\bm{q}_\epsilon - \frac{\inProd{ \bm{q}_\epsilon}{ \bm{f}}}{\|\bm{f}\|^2}\bm{f}\right\|^2 = \|\bm{q}_\epsilon\|^2 - \frac{(\inProd{ \bm{q}_\epsilon}{ \bm{f}})^2}{\|\bm{f}\|^2}.
		\]
		A direct computation then shows that
		\begin{align}
			\|\bm{q}_\epsilon\|^2 & = \frac1{\|\bbz\|^2} + (\alpha_1-\epsilon)^2\tz_1^{-2} + (\alpha_2+\epsilon)^2\tz_2^{-2} + \sum_{i=3}^n \alpha^2_i \tz_i^{-2}                       \notag \\
			& = \frac1{\|\bbz\|^2} + \sum_{i=1}^n \alpha^2_i \tz_i^{-2} + 2\epsilon(\alpha_2\tz_2^{-2} - \alpha_1\tz_1^{-2}) + \epsilon^2(\tz_1^{-2} + \tz_2^{-2}) \notag\\
			& = \|\bm{f}\|^2 + 2\epsilon(\alpha_2\tz_2^{-2} - \alpha_1\tz_1^{-2}) + \epsilon^2(\tz_1^{-2} + \tz_2^{-2}),\notag
		\end{align}
		where the last equality follows from the definition of \( \bm{f} \) in \eqref{eq:1st-type-face-ray}.
		Furthermore,
		\begin{align}
(\inProd{ \bm{q}_\epsilon}{ \bm{f}})^2 & =\textstyle \left(\frac1{\|\bbz\|^2} + \alpha_1(\alpha_1-\epsilon)\tz_1^{-2} + \alpha_2(\alpha_2+\epsilon)\tz_2^{-2} + \sum_{i=3}^n \alpha^2_i \tz_i^{-2}\right)^2 \notag\\
			& = \left[ \|\bm{f}\|^2  + \epsilon (\alpha_2\tz_2^{-2} - \alpha_1\tz_1^{-2})\right]^2                                                                    \notag\\
			& = \|\bm{f}\|^4 + 2\epsilon\|\bm{f}\|^2 (\alpha_2\tz_2^{-2} - \alpha_1\tz_1^{-2}) + \epsilon^2(\alpha_2\tz_2^{-2} - \alpha_1\tz_1^{-2})^2.\notag
		\end{align}
		Combining the above three identities, we deduce further that
		\begin{align}\label{eq:lowerlowerboundpower}
			\dist(\bm{q}_\epsilon,\Fr)^2 & = \epsilon^2\left(\tz_1^{-2} + \tz_2^{-2} - \frac{(\alpha_2\tz_2^{-2} - \alpha_1\tz_1^{-2})^2}{\|\bm{f}\|^2}\right)\nonumber                          \\
			& \ge \epsilon^2\left(\tz_1^{-2} + \tz_2^{-2} - \frac{(\alpha_2\tz_2^{-2} - \alpha_1\tz_1^{-2})^2}{\alpha^2_2\tz_2^{-2} + \alpha^2_1\tz_1^{-2}}\right),
		\end{align}
		where the inequality follows from the definition of \( \bm{f} \). Now, notice that in \eqref{eq:lowerlowerboundpower}, the scalar term is strictly greater than zero, because
		\[
		(\alpha_2\tz_2^{-2} - \alpha_1\tz_1^{-2})^2 < (\alpha_2\tz_2^{-2} + \alpha_1\tz_1^{-2})^2 \le (\tz_1^{-2} + \tz_2^{-2})(\alpha^2_2\tz_2^{-2} + \alpha^2_1\tz_1^{-2}),
		\]
		where the strict inequality holds because \( \alpha_i\tz_i^{-2} > 0 \) for \( i = 1 \), \( 2 \), and the last inequality follows from the Cauchy-Schwarz inequality.
		This together with \eqref{eq:lowerlowerboundpower} shows that \( \dist(\bm{q}_\epsilon,\Fr) = \Omega(\epsilon) \). Combining this with \eqref{eq:distpower}, we obtain
		$
		\limsup_{\epsilon\downarrow 0}\frac{\dist(\bm{q}_\epsilon,\gK)^\frac12}{\dist(\bm{q}_\epsilon,\Fr)} < \infty
		$.
		Thus \( | \cdot | ^{\frac{1}{2}} \) satisfies the asymptotic optimality criterion {(cf. \cite[Definition 3.1]{LiLoPo21}}) for \( \gK \) and \( \bm{z} \){, which implies that the error bound is optimal in the sense of \cite[Theorem 3.2(b)]{LiLoPo21}.}
	\end{remark}


	We now look at the faces that are exposed by \( \bm{z}\in \bd\dgK\backslash\{\bm{0}\} \) with \( \bbz = \bm 0 \).
	\begin{theorem}\label{thm:xface}
		Let \( \bm{z}\in \bd\dgK\backslash\{\bm{0}\} \) with \( \bbz = \bm{0} \) and let \( \Fhatz := \{\bm{z}\}^\perp\cap \gK \). Let \( \stdIndex := \{i\,| \,\tz_i > 0\} \),\footnote{Since \( \bbz = \bm{0} \) and \( \bm{z}\in \bd\dgK\backslash\{\bm{0}\} \), we must have \( \emptyset\neq \stdIndex \subsetneq \{1,2,\ldots,n\} \).} \( \eta > 0 \) and define \( \beta:= \sum_{i\in \stdIndex}\alpha_i \) and
		\begin{equation}\label{eq:gamma-beta-2nd-type-face}
		 \!\gamma_{\bm{z},\eta} \!:=\! \inf_{\bm{v}}\left\{\frac{\|\bm{v} - \bm{w}\|^\beta}{\|\bm{u} - \bm{w}\|}\,\bigg| \,
\begin{array}{c}
\bm{v}\in \bd \gK\cap \B(\eta)\backslash \Fhatz,\,\bm{w} = P_{\{\bm{z}\}^\perp}(\bm{v}),\\
\bm{u} = P_{\Fhatz}(\bm{w}),\,\bm{u} \neq \bm{w}
\end{array}\right\}.
		\end{equation}
		Then it holds that \( \gamma_{\bm{z},\eta}\in (0,\infty] \) and that
		\[
		\dist(\bm{q},\Fhatz)\le \max\{2\eta^{1-\beta},2\gamma_{\bm{z},\eta}^{-1}\}\cdot\dist(\bm{q},\gK)^\beta\,\,\text{ whenever }  \bm{q}\in \{\bm{z}\}^\perp\cap \B(\eta) .
		\]
	\end{theorem}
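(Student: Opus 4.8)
The plan is to show directly that $\gamma_{\bm{z},\eta}$ is bounded below by a positive constant, from which the displayed error bound follows exactly as in Theorem~\ref{thm:Holderian-1/2-1st-type-face}. The crucial simplification relative to Theorem~\ref{thm:Holderian-1/2-1st-type-face} is that, by Proposition~\ref{prop:facial-stru-gK}\ref{item:case-II-facial-structure}, the face $\Fhatz$ is \emph{polyhedral}: it is the Cartesian product $\{\bm{0}_m\}\times\{\bm{0}_{\stdIndex}\}\times\R_+^{\{1,\dots,n\}\setminus\stdIndex}$, so the relevant projections have closed forms and no delicate curvature estimate (in the spirit of Lemmas~\ref{lemma:inequ-inprod-power-cone} and \ref{lemma:Lemma-4.1}) will be needed.

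First I would record the explicit formulas. Since $\bbz=\bm{0}$ and $\bm{z}\in\bd\dgK\setminus\{\bm{0}\}$, we have $\tz_i>0$ for $i\in\stdIndex$, $\tz_i=0$ for $i\notin\stdIndex$, and $\|\bm{z}\|=\|\tbz\|$. For any admissible $\bm{v}=(\bbv,\tbv)\in\bd\gK\cap\B(\eta)\setminus\Fhatz$ (so $\tbv\ge\bm{0}$ and $\|\bbv\|=\prod_{i=1}^n\tv_i^{\alpha_i}$), set $\lambda:=\inProd{\bm{z}}{\bm{v}}/\|\bm{z}\|^2=\big(\sum_{i\in\stdIndex}\tz_i\tv_i\big)/\|\bm{z}\|^2\ge0$, so that $\bm{w}=P_{\{\bm{z}\}^\perp}(\bm{v})=\bm{v}-\lambda\bm{z}$; consequently
\[
\|\bm{v}-\bm{w}\|=\lambda\|\bm{z}\|=\frac{1}{\|\bm{z}\|}\sum_{i\in\stdIndex}\tz_i\tv_i,\qquad
\|\bm{u}-\bm{w}\|^2=\|\bbv\|^2+\sum_{i\in\stdIndex}\tw_i^2 ,
\]
where the second identity uses $\bm{u}=P_{\Fhatz}(\bm{w})=(\bm{0}_m,\bm{0}_{\stdIndex},(\tv_i)_{i\notin\stdIndex})$; indeed the coordinates $i\notin\stdIndex$ of $\bm{w}$ coincide with those of $\bm{v}$ and are nonnegative, so no truncation occurs in this projection. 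It then suffices to prove $\|\bm{u}-\bm{w}\|\le C\,\|\bm{v}-\bm{w}\|^{\beta}$ for a constant $C=C(\bm{z},\eta)$, which yields $\gamma_{\bm{z},\eta}\ge 1/C>0$. (One could equivalently run the contradiction argument through the sequential characterization \cite[Lemma 3.12]{LiLoPo20}, as in Theorem~\ref{thm:Holderian-1/2-1st-type-face}, but the estimate is uniform in $\bm{v}$, so the direct route is cleaner.)

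The two summands of $\|\bm{u}-\bm{w}\|^2$ are bounded separately. For the polyhedral part, $\tw_i=\tv_i-\lambda\tz_i$ with $\lambda\le(\max_j\tz_j/\|\bm{z}\|^2)\sum_{j\in\stdIndex}\tv_j$, whence $\sum_{i\in\stdIndex}|\tw_i|\le D\sum_{i\in\stdIndex}\tv_i$ for some $D>0$; combining this with $\|\bm{v}-\bm{w}\|\ge(\min_{i\in\stdIndex}\tz_i/\|\bm{z}\|)\sum_{i\in\stdIndex}\tv_i$, with $\|\bm{v}-\bm{w}\|\le\|\bm{v}\|\le\eta$, and with $\beta<1$ (because $\stdIndex\subsetneq\{1,\dots,n\}$), gives $\sum_{i\in\stdIndex}\tw_i^2\le\big(\sum_{i\in\stdIndex}|\tw_i|\big)^2\lesssim\|\bm{v}-\bm{w}\|^2\le\eta^{2(1-\beta)}\|\bm{v}-\bm{w}\|^{2\beta}$. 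For the conic part, write $\|\bbv\|=\prod_{i\in\stdIndex}\tv_i^{\alpha_i}\cdot\prod_{i\notin\stdIndex}\tv_i^{\alpha_i}$; the second factor is $\le\eta^{\sum_{i\notin\stdIndex}\alpha_i}=\eta^{1-\beta}$ since $\tv_i\le\eta$, while for the first factor the weighted AM--GM inequality with weights $\alpha_i/\beta$ ($i\in\stdIndex$, summing to $1$) gives $\prod_{i\in\stdIndex}\tv_i^{\alpha_i}=\big(\prod_{i\in\stdIndex}\tv_i^{\alpha_i/\beta}\big)^{\beta}\le\big(\tfrac1\beta\sum_{i\in\stdIndex}\alpha_i\tv_i\big)^{\beta}\lesssim\big(\sum_{i\in\stdIndex}\tv_i\big)^{\beta}\lesssim\|\bm{v}-\bm{w}\|^{\beta}$, so $\|\bbv\|^2\lesssim\|\bm{v}-\bm{w}\|^{2\beta}$. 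Adding the two estimates yields $\|\bm{u}-\bm{w}\|^2\le C^2\|\bm{v}-\bm{w}\|^{2\beta}$, hence $\gamma_{\bm{z},\eta}\ge1/C>0$; together with the convention that the infimum over an empty index set is $+\infty$, this gives $\gamma_{\bm{z},\eta}\in(0,\infty]$. With this in hand, the displayed error bound is obtained from \cite[Theorem 3.10]{LiLoPo20} verbatim as in the proof of Theorem~\ref{thm:Holderian-1/2-1st-type-face}. I do not anticipate a serious obstacle here: the only care required is the constant bookkeeping and the verification of the closed form of $P_{\Fhatz}$ (in particular that no truncation is needed), while everything substantive reduces to a single application of weighted AM--GM and the boundedness of the coordinates indexed outside $\stdIndex$.
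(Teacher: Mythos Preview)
The proposal is correct and follows essentially the same direct route as the paper: both establish $\gamma_{\bm{z},\eta}>0$ by fixing an admissible $\bm{v}$, writing $\|\bm{u}-\bm{w}\|^2=\|\bbv\|^2+\sum_{i\in\stdIndex}\tw_i^2$ via the explicit projection onto the polyhedral face $\Fhatz$, bounding each summand in terms of $\|\tbv_\stdIndex\|$ (or equivalently $\sum_{i\in\stdIndex}\tv_i$), and then invoking \cite[Theorem~3.10]{LiLoPo20}. The only cosmetic difference is that the paper bounds $\prod_{i\in\stdIndex}\tv_i^{\alpha_i}\le\|\tbv_\stdIndex\|^\beta$ directly from $\tv_i\le\|\tbv_\stdIndex\|$, rather than through weighted AM--GM as you do.
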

	\begin{proof}

		{In view of \cite[Theorem 3.10]{LiLoPo20}}, we need only show that $\gamma_{\bm{z},\eta} > 0$.
		To that end, let \( \bm{v}\in \bd \gK\cap \B(\eta)\backslash \Fhatz \), \( \bm{w} = P_{\{\bm{z}\}^\perp}(\bm{v}) \), \( \bm{u} = P_{\Fhatz}(\bm{w}) \), and \( \bm{u} \neq \bm{w} \). Then a direct computation shows
		that
		\begin{equation}\label{eq:dist-w-v}
			\|\bm{w}- \bm{v}\| = \frac1{\|\bm{z}\|}|\inProd{ \bm{z}}{\bm{v}}| \overset{\rm (a)}= \frac1{\|\bm{z}\|}\sum_{i\in \stdIndex}\tz_i\tv_i \overset{\rm (b)}\ge \frac{\min_{i\in \stdIndex}\tz_i}{\|\bm{z}\|}\sum_{i\in \stdIndex}\tv_i
			\overset{\rm (c)}\ge \frac{\min_{i\in \stdIndex}\tz_i}{\|\bm{z}\|}\|\tbv_{\stdIndex}\|,
		\end{equation}
		where (a), (b) and (c) hold because \( \tv_i\ge 0 \) and \( \tz_i > 0 \) for all \( i\in \stdIndex \), with \( \|\tbv_{\stdIndex}\|:= \sqrt{\sum_{i\in \stdIndex}\tv^2_i} \) (note that \( \stdIndex \neq \emptyset \), thanks to \( \bbz = 0 \) and \( \bm{z}\neq \bm{0} \)). Next, notice that \( \bm{w} = \bm{v} - \frac{\inProd{ \bm{z}}{\bm{v}}}{\|\bm{z}\|^2}\bm{z} \). Using this and the definitions of \( \bm{z} \) and \( \stdIndex \), we deduce that
		\begin{equation}\label{eq:form-w-by-v}
			\bbw = \bbv,\,\,\,\tw_i = \tv_i - \frac{\tz_i}{\|\bm{z}\|^2}\bigg(\sum_{j\in \stdIndex}\tz_j\tv_j\bigg)\,\,\,\, \forall i\in \stdIndex\,\,\,\,\,\,\,\mbox{and\,\,\,\,\,\,\,} \tw_i = \tv_i \ge 0\,\,\,\,\forall i \notin \stdIndex.
		\end{equation}
		In view of this and the definition of \( \Fhatz \) in \eqref{eq:2nd-type-face-d-dim}, we see that \( \tu_i = \tw_i \) whenever \( i\notin \stdIndex \), and hence
		\begin{equation}\label{eq:dist-w-u}
			\|\bm{w} - \bm{u}\| = \sqrt{\|\bbw\|^2 + \sum_{i\in \stdIndex}\tw^2_i} \le \sqrt{\|\bbv\|^2 + n(1+\sqrt{n})^2\|\tbv_{\stdIndex}\|^2},
		\end{equation}
		where the inequality follows from \eqref{eq:form-w-by-v} and the fact that for each \( i\in \stdIndex \),
		\[
		\begin{aligned}
			|\tw_i| & = \bigg|\tv_i - \frac{\tz_i}{\|\bm{z}\|^2}\bigg(\sum_{j\in \stdIndex}\tz_j\tv_j\bigg)\bigg|\le \bigg(1 + \frac{|\tz_i|}{\|\bm{z}\|^2}\sum_{j\in \stdIndex}|\tz_j|\bigg)\|\tbv_{\stdIndex}\| \\
			& \le \left(1 + \frac{\sqrt{n}|\tz_i|}{\|\bm{z}\|}\right)\|\tbv_{\stdIndex}\| \le (1+\sqrt{n})\|\tbv_{\stdIndex}\|.
		\end{aligned}
		\]
		Next, note that we have
		\begin{equation}\label{eq:upbd-norm-bar-v}
			\|\bbv\|=%
			\prod_{i=1}^n(\tv_i)^{\alpha_i} %
			= \prod_{i\notin \stdIndex}\tv_i^{\alpha_i} \cdot \prod_{i\in \stdIndex}\tv_i^{\alpha_i}%
			\le \prod_{i\notin \stdIndex}\eta^{\alpha_i} \cdot\prod_{i\in \stdIndex}\|\tbv_{\stdIndex}\|^{\alpha_i}%
			= \eta^{1-\beta} \|\tbv_{\stdIndex}\|^\beta,
		\end{equation}
		where the inequality holds because \( \bm{v}\in \B(\eta) \). Combining \eqref{eq:dist-w-v}, \eqref{eq:dist-w-u} and \eqref{eq:upbd-norm-bar-v}, we deduce
		\[
		\begin{aligned}
			\|\bm{w} - \bm{u}\| & \le \sqrt{\|\bbv\|^2 + n(1+\sqrt{n})^2\|\tbv_{\stdIndex}\|^2} \le \|\bbv\| + (n+\sqrt{n})\|\tbv_{\stdIndex}\|                                                                                                     \\
			& \le \eta^{1-\beta} \|\tbv_{\stdIndex}\|^\beta + (n+\sqrt{n})\|\tbv_{\stdIndex}\| = (\eta^{1-\beta}  + (n+\sqrt{n})\|\tbv_{\stdIndex}\|^{1-\beta})\|\tbv_{\stdIndex}\|^\beta                                   \\
			& \overset{\rm (a)}\le \eta^{1-\beta}(n+1 + \sqrt{n})\|\tbv_{\stdIndex}\|^\beta \overset{\rm (b)}{\le}\frac{\eta^{1-\beta}(n+1 + \sqrt{n})\|\bm{z}\|^\beta}{\left(\min_{i\in_{\stdIndex}}\tz_i\right)^\beta}\|\bm{w}-\bm{v}\|^\beta.
		\end{aligned}
		\]
		Here (a) holds since \( \bm{v}\in \B(\eta) \) and \( \beta \in (0,1) \); (b) is true because of \eqref{eq:dist-w-v}.
		Thus, \( \gamma_{\bm{z},\eta}\ge \frac{\left(\min_{i\in \stdIndex}\tz_i\right)^\beta}{\eta^{1-\beta}(n+1 + \sqrt{n})\|\bm{z}\|^\beta} > 0 \), and the desired error bound {follows from \cite[Theorem 3.10]{LiLoPo20}}.
	\end{proof}

	\begin{remark}[Optimality of the error bound in Theorem~\ref{thm:xface}]\label{remark:F-z-order-beta}
		\!\!\!Let \( \bm{z}\!\in\!\! \bd\dgK\backslash\!\{\bm{0}\} \) with \( \bbz = \bm{0} \) and let \( \Fhatz := \{\bm{z}\}^\perp\cap \gK \). Let \( \stdIndex := \{i\,| \,\tz_i > 0\}\neq \emptyset \) and define
		\[
		\beta:= \sum_{i\in \stdIndex}\alpha_i\in (0,1).
		\]
		Fix any \( \bm{u}\in \R^m \) with \( \|\bm{u}\|=1 \) and define the continuous function \( \bm{q}:(0,1)\to \{\bm{z}\}^\perp \) by $\epsilon \mapsto \bm{q}_\epsilon := (\bar{\bm{q}}_\epsilon,\widetilde{\bm{q}}_\epsilon)$ where
		\[
		\bar{\bm{q}}_\epsilon = \epsilon^\beta \bm{u},\,\,(\widetilde{\bm{q}}_\epsilon)_i = 0\,\,\forall i \in \stdIndex,\,\mbox{and} \,(\widetilde{\bm{q}}_\epsilon)_i = 1, \,\forall i \notin \stdIndex.
		\]
		It is clear that for all $\epsilon$, \( \inProd{ \bm{z}}{\bm{q}_\epsilon} = 0 \) and \( \dist(\bm{q}_\epsilon,\Fhatz) \to 0 \) as $\epsilon \downarrow 0$ . Now, define the function \( \bm{p}:(0,1)\to \gK \) by $\epsilon \mapsto \bm{p}_\epsilon := (\bar{\bm{p}}_\epsilon,\widetilde{\bm{p}}_\epsilon)$ where
		\[
		\bar{\bm{p}}_\epsilon = \epsilon^\beta \bm{u},\,\,(\widetilde{\bm{p}}_\epsilon)_i = \epsilon\,\,\forall i \in \stdIndex,\,\mbox{and} \,(\widetilde{\bm{p}}_\epsilon)_i = 1, \,\forall i \notin \stdIndex.
		\]
		Clearly $\bm{p}_\epsilon$ lies in \( \gK \), and we have that
		$
			\dist(\bm{q}_\epsilon,\gK)\le \|\bm{q}_\epsilon - \bm{p}_\epsilon\| \le |\stdIndex|\cdot\epsilon
		$.
		On the other hand, we have in view of \eqref{eq:2nd-type-face-d-dim} that \( \dist(\bm{q}_\epsilon,\Fhatz) = \epsilon^\beta > 0 \).
		Hence,
		$
		\limsup_{\epsilon\downarrow 0}\frac{\dist(\bm{q}_\epsilon,\gK)^\beta}{\dist(\bm{q}_\epsilon,\Fhatz)} \le |\stdIndex|^\beta < \infty
		$.
		Thus \( | \cdot | ^{\beta} \) satisfies the asymptotic optimality criterion {(cf. \cite[Definition 3.1]{LiLoPo21}}) for \( \gK \) and \( \bm{z} \){, which implies that the error bound is optimal in the sense of \cite[Theorem 3.2(b)]{LiLoPo21}.}
	\end{remark}

	Using Theorems~\ref{thm:Holderian-1/2-1st-type-face} and \ref{thm:xface} {together with \cite[Lemma~3.9]{LiLoPo20}}, we have the following result concerning one-step facial residual functions.
	\begin{corollary}\label{corollary:1-FRFs-power-cones}
		Consider \( \gK \) and its dual cone \( \dgK \).
		\begin{enumerate}[label=(\roman*)]
			\item\label{3.6i} Let \( \bm{z}\in \bd\dgK\backslash\{\bm{0}\} \) with \( \bbz\neq \bm{0} \) and let \( \Fr := \{\bm{z}\}^\perp\cap \gK \). Let \( \gamma_{\bm{z},t} \) be defined as in \eqref{eq:gamma-1/2-1st-type-face}. Then the function \( \psi_{\gK,\bm{z}}:\R_+\times \R_+\to \R_+ \) given by\vspace{-0.15 cm}
			\begin{equation}\label{eq:1-FRF-1st-face}\vspace{-0.15 cm}
				\!\!\psi_{\gK,\bm{z}}(\epsilon,t) \!:=\! \max \left\{\epsilon,\epsilon/\|\bm{z}\| \right\} + \max\{2\sqrt{t},2\gamma_{\bm{z},t}^{-1}\}(\epsilon +\max \left\{\epsilon,\epsilon/\|\bm{z}\| \right\} )^\frac12
			\end{equation}
			is a one-step facial residual function for \( \gK \) and \( \bm{z} \).
			\item Let \( \bm{z}\in \bd\dgK\backslash\{\bm{0}\} \) with \( \bbz = \bm{0} \) and let \( \Fhatz := \{\bm{z}\}^\perp\cap \gK \). Let \( \gamma_{\bm{z},t} \) be defined as in \eqref{eq:gamma-beta-2nd-type-face}, where \( \beta:= \sum_{i:\tz_i > 0}\alpha_i \). Then the function \( \psi_{\gK,\bm{z}}:\R_+\times \R_+\to \R_+ \) given by\vspace{-0.15 cm}
			\begin{equation}\label{eq:1-FRF-2nd-face}\vspace{-0.15 cm}
				\!\!\psi_{\gK,\bm{z}}(\epsilon,t) \!:= \!\max\! \left\{\epsilon,\epsilon/\|\bm{z}\| \right\} + \max\{2t^{1-\beta}\!,2\gamma_{\bm{z},t}^{-1}\}(\epsilon +\max \left\{\epsilon,\epsilon/\|\bm{z}\| \right\} )^\beta
			\end{equation}
			is a one-step facial residual function for \( \gK \) and \( \bm{z} \).
		\end{enumerate}
	\end{corollary}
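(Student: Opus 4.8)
The plan is to invoke \cite[Lemma~3.9]{LiLoPo20}, which upgrades an error bound between the face $\stdFace = \{\bm z\}^\perp\cap\gK$ and the hyperplane $\{\bm z\}^\perp$ into a $\mathds 1$-FRF for $\gK$ and $\bm z$; Theorems~\ref{thm:Holderian-1/2-1st-type-face} and~\ref{thm:xface} supply exactly such error bounds, so the work is purely in tracking the constants. I would prove part~\ref{3.6i} in full and remark that part~(ii) is word-for-word the same with the exponent $\tfrac12$ replaced by $\beta$ and $2\sqrt t$ replaced by $2t^{1-\beta}$. First I would check property~(i) of Definition~\ref{def:one-step-facial-residual-function}: $\psi_{\gK,\bm z}$ is visibly nonnegative, plugging $\epsilon=0$ annihilates both summands so $\psi_{\gK,\bm z}(0,t)=0$, monotonicity in $\epsilon$ is immediate, and monotonicity in $t$ follows once one observes that $t\mapsto\gamma_{\bm z,t}$ is nonincreasing (its defining infimum in \eqref{eq:gamma-1/2-1st-type-face} is taken over the larger set $\bd\gK\cap\B(t)\setminus\Fr$ as $t$ grows), so that $t\mapsto\max\{2\sqrt t,2\gamma_{\bm z,t}^{-1}\}$ is nondecreasing while the remaining factor is $t$-independent (here $\gamma_{\bm z,t}^{-1}:=0$ when $\gamma_{\bm z,t}=\infty$).

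For property~(ii), take $\bm x\in\Span\gK=\R^{m+n}$ with $\dist(\bm x,\gK)\le\epsilon$ and $\inProd{\bm x}{\bm z}\le\epsilon$, and set $t:=\|\bm x\|$; if $t=0$ then $\bm x=\bm 0\in\gK\cap\{\bm z\}^\perp$ and there is nothing to prove, so assume $t>0$. Let $\bm q:=P_{\{\bm z\}^\perp}(\bm x)=\bm x-\tfrac{\inProd{\bm x}{\bm z}}{\|\bm z\|^2}\bm z$. Choosing $\bm y\in\gK$ with $\|\bm x-\bm y\|\le\epsilon$ and using $\bm z\in\dgK$ gives $\inProd{\bm x}{\bm z}=\inProd{\bm y}{\bm z}+\inProd{\bm x-\bm y}{\bm z}\ge-\epsilon\|\bm z\|$, so $|\inProd{\bm x}{\bm z}|\le\max\{\epsilon,\epsilon\|\bm z\|\}$ and hence
\[
\|\bm x-\bm q\|=\frac{|\inProd{\bm x}{\bm z}|}{\|\bm z\|}\le\max\{\epsilon,\epsilon/\|\bm z\|\}.
\]
Since projection onto a subspace is norm-nonincreasing, $\|\bm q\|\le\|\bm x\|=t$, so $\bm q\in\{\bm z\}^\perp\cap\B(t)$, and $\dist(\bm q,\gK)\le\|\bm q-\bm x\|+\dist(\bm x,\gK)\le\max\{\epsilon,\epsilon/\|\bm z\|\}+\epsilon$. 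Applying Theorem~\ref{thm:Holderian-1/2-1st-type-face} with $\eta=t$, together with $\gK\cap\{\bm z\}^\perp=\Fr$ from Proposition~\ref{prop:facial-stru-gK}, yields
\[
\dist(\bm q,\Fr)\le\max\{2\sqrt t,2\gamma_{\bm z,t}^{-1}\}\bigl(\epsilon+\max\{\epsilon,\epsilon/\|\bm z\|\}\bigr)^{1/2}.
\]
A final triangle inequality $\dist(\bm x,\gK\cap\{\bm z\}^\perp)\le\|\bm x-\bm q\|+\dist(\bm q,\Fr)$ produces exactly $\psi_{\gK,\bm z}(\epsilon,t)$, establishing~(ii).

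The only step that is not pure bookkeeping is the identification of the second argument $t$ of $\psi_{\gK,\bm z}$ with the ball radius $\eta$ appearing in Theorems~\ref{thm:Holderian-1/2-1st-type-face}--\ref{thm:xface}. This is legitimate precisely because $\bm q$ is the projection of $\bm x$ onto a \emph{linear} subspace, so $\|\bm q\|\le\|\bm x\|=t$, and because $t\mapsto\gamma_{\bm z,t}$ is monotone so that choosing $\eta=t$ only loosens the constant in the direction we want; this is the content abstracted in \cite[Lemma~3.9]{LiLoPo20}, which I would cite to close the argument rather than reprove the reduction from scratch. I do not anticipate any genuine obstacle here, as all the analytic content is already carried by the two preceding theorems.
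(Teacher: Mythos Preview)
Your proposal is correct and follows exactly the paper's approach: the paper's entire proof is the single sentence ``Using Theorems~\ref{thm:Holderian-1/2-1st-type-face} and \ref{thm:xface} together with \cite[Lemma~3.9]{LiLoPo20}, we have the following result,'' and what you have written is precisely an unpacking of that lemma's mechanism (project onto $\{\bm z\}^\perp$, control $\|\bm x-\bm q\|$ via the two-sided bound on $\inProd{\bm x}{\bm z}$, apply the relevant theorem with $\eta=\|\bm x\|$, and finish with the triangle inequality). The monotonicity and constant-tracking details you supply are accurate and match the form of the $\mathds 1$-FRFs in \eqref{eq:1-FRF-1st-face}--\eqref{eq:1-FRF-2nd-face}.
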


	We now collect these results to show the tight error bounds for $\gK$.
	\begin{theorem}[Error bounds for the generalized power cone and their optimality]\label{thm:error-bound-power-cone-optimality}
		Consider \( \gK \) and its dual cone \( \dgK \).
		Let \( \subSpace \subseteq \R^{m + n} \) be a subspace and \( \bm{a} \in \R^{m+n} \) be given.
		Suppose that \( ( \subSpace + \bm{a} ) \cap \gK \neq \emptyset \).
		Then the following items hold.
		\begin{enumerate}[label=(\roman*)]
			\item\label{item:error-bound-power-cone-1} \( \dpps (\gK, \subSpace + \bm{a}) \leq 1 \).
			\item\label{item:error-bound-power-cone-2} If \( \dpps (\gK, \subSpace + \bm{a}) = 0 \), then a Lipschitzian error bound holds.
			\item\label{item:error-bound-power-cone-3} If \( \dpps (\gK, \subSpace + \bm{a}) = 1 \), consider the chain of faces \( \stdFace \subsetneq \gK \) with length being \( 2 \).
			\begin{enumerate}[label=(\alph*)]
				\item\label{item:error-bound-power-cone-3-1} If \( \stdFace = \Fr \), then a H\"olderian error bound with exponent \( 1 / 2 \) holds.
				\item\label{item:error-bound-power-cone-3-2} If \( \stdFace = \Fhatz \) with \( \bm{z} \in \dgK \cap \subSpace^{\perp } \cap \{\bm{a}\}^{\perp } \), then a H\"olderian error bound with exponent \( \beta := \sum_{i : \tz_i > 0} \alpha _i \) holds.
				\item\label{item:error-bound-power-cone-3-3} If \( \stdFace = \{\bm{0}\} \), then a Lipschitzian error bound holds.
			\end{enumerate}
			\item\label{item:error-bound-power-cone-4} All these error bounds are the best in the sense {stated in \cite[Theorem 3.2(b)]{LiLoPo21}}.
		\end{enumerate}
	\end{theorem}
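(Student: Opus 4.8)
The plan is to feed the facial data of $\gK$ built above --- Proposition~\ref{prop:facial-stru-gK}, the fact that $\gK$ is projectionally exposed, and the $\mathds 1$-FRFs of Corollary~\ref{corollary:1-FRFs-power-cones} --- into the error-bound machinery of \cite{L17,LiLoPo20,LiLoPo21}, and then to read off optimality from Remarks~\ref{remark:F-r-order-1/2} and~\ref{remark:F-z-order-beta}. The structural fact driving item~\ref{item:error-bound-power-cone-1} is that \emph{every proper face of $\gK$ that a facial reduction step can produce is polyhedral}: such a face is exposed, hence by Proposition~\ref{prop:facial-stru-gK} --- together with the observation that an exposing vector in $\ri\dgK$ yields $\{\bm 0\}$, since $\gK$ is pointed --- it is one of $\Fr$, $\Fhatz$, or $\{\bm 0\}$; the rays $\Fr$ and the singleton $\{\bm 0\}$ are obviously polyhedral, while $\Fhatz$ is cut out of $\R^{m+n}_+$ by the linear equalities $\bbx=\bm 0$ and $\tx_i=0$ ($i\in\stdIndex$), hence is a copy of $\R^{n-|\stdIndex|}_+$. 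Running facial reduction for \eqref{eq:conic-feasibility-problem} from $\gK$, either the PPS condition already holds for $\gK$ and $\subSpace+\bm a$ (so $\dpps=0$) or, after one step, we reach a proper exposed face $\stdFace=\{\bm z\}^\perp\cap\gK$ with $\bm z\in\dgK\cap\subSpace^\perp\cap\{\bm a\}^\perp\setminus\{\bm 0\}$; since $\stdFace$ is polyhedral and $(\subSpace+\bm a)\cap\stdFace=(\subSpace+\bm a)\cap\gK\ne\emptyset$, the PPS condition holds for $\stdFace$ and $\subSpace+\bm a$. Thus $\dpps(\gK,\subSpace+\bm a)\le 1$, proving item~\ref{item:error-bound-power-cone-1}.

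Item~\ref{item:error-bound-power-cone-2} is immediate: if $\dpps=0$ the PPS condition holds for $\gK$ and $\subSpace+\bm a$, giving a Lipschitzian error bound by \cite[Corollary~3]{BBL99} (cf.\ the discussion before \cite[Proposition~2.3]{LiLoPo20}). For item~\ref{item:error-bound-power-cone-3}, fix a shortest (length-$2$) chain $\gK\supsetneq\stdFace$ realizing $\dpps=1$, with exposing vector $\bm z$; by the above $\stdFace\in\{\Fr,\Fhatz,\{\bm 0\}\}$. The error-bound theorem of \cite{LiLoPo20} (via \cite[Theorem~3.10]{LiLoPo20} and the $\mathds 1$-FRF composition rules, see also \cite[Section~3]{LiLoPo21}) produces a bound for $\gK$ and $\subSpace+\bm a$ by composing a $\mathds 1$-FRF for $\gK$ and $\bm z$ with the Lipschitzian bound that PPS supplies for $\stdFace$ and $\subSpace+\bm a$; composing an $O(\epsilon^{\gamma})$-type $\mathds 1$-FRF with a Lipschitzian bound yields a uniform H\"olderian error bound with exponent $\gamma$. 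If $\stdFace=\Fr$, the $\mathds 1$-FRF \eqref{eq:1-FRF-1st-face} is $O(\epsilon^{1/2})$ for bounded second argument (using $\gamma_{\bm z,t}\in(0,\infty]$ and its monotonicity in $t$), so the exponent is $1/2$; if $\stdFace=\Fhatz$, \eqref{eq:1-FRF-2nd-face} is $O(\epsilon^{\beta})$ with $\beta=\sum_{i:\tz_i>0}\alpha_i\in(0,1)$, so the exponent is $\beta$; if $\stdFace=\{\bm 0\}$, then $\gK\cap\{\bm z\}^\perp=\{\bm 0\}$ forces $\bm z\in\ri\dgK$ (as $\gK$ is closed and pointed, $\{\bm x\in\gK:\|\bm x\|=1\}$ is compact, whence $\inProd{\bm x}{\bm z}\ge c\|\bm x\|$ on $\gK$ for some $c>0$), and splitting $\bm x=P_{\gK}(\bm x)+(\bm x-P_{\gK}(\bm x))$ gives $\|\bm x\|\le c^{-1}\inProd{\bm x}{\bm z}+(1+c^{-1}\|\bm z\|)\dist(\bm x,\gK)$, so the $\mathds 1$-FRF is Lipschitzian and the composed bound stays Lipschitzian.

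For item~\ref{item:error-bound-power-cone-4}, the Lipschitzian bounds of items~\ref{item:error-bound-power-cone-2} and~\ref{item:error-bound-power-cone-3-3} are optimal, $\gamma=1$ being the maximal admissible exponent; for items~\ref{item:error-bound-power-cone-3-1} and~\ref{item:error-bound-power-cone-3-2}, Remarks~\ref{remark:F-r-order-1/2} and~\ref{remark:F-z-order-beta} show that $|\cdot|^{1/2}$, respectively $|\cdot|^{\beta}$, satisfies the asymptotic optimality criterion \cite[Definition~3.1]{LiLoPo21} for $\gK$ and the relevant $\bm z$, whence \cite[Theorem~3.2(b)]{LiLoPo21} upgrades this to optimality of the composed error bound. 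I expect the one genuinely new step to be the structural observation underlying item~\ref{item:error-bound-power-cone-1} --- that \emph{all} proper faces of $\gK$ are polyhedral, so a single facial reduction step always suffices --- which leans squarely on the facial characterisation in Proposition~\ref{prop:facial-stru-gK}; everything else is bookkeeping over Theorems~\ref{thm:Holderian-1/2-1st-type-face} and~\ref{thm:xface} and their remarks, the only minor subtlety being to recognize in the $\stdFace=\{\bm 0\}$ case that the exposing vector lies in $\ri\dgK$.
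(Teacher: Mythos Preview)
Your proposal is correct and follows essentially the same route as the paper: both argue item~\ref{item:error-bound-power-cone-1} from the polyhedrality of all proper exposed faces (via Proposition~\ref{prop:facial-stru-gK}), invoke \cite[Corollary~3]{BBL99} for item~\ref{item:error-bound-power-cone-2}, feed the $\mathds 1$-FRFs of Corollary~\ref{corollary:1-FRFs-power-cones} into the machinery of \cite{LiLoPo20,LiLoPo21} for items~\ref{item:error-bound-power-cone-3-1}--\ref{item:error-bound-power-cone-3-2}, and read optimality off Remarks~\ref{remark:F-r-order-1/2} and~\ref{remark:F-z-order-beta} via \cite[Theorem~3.2]{LiLoPo21}. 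The only noticeable divergence is in item~\ref{item:error-bound-power-cone-3-3}: the paper simply observes that $\stdFace=\{\bm 0\}$ forces the feasible region to be $\{\bm 0\}$ and cites \cite[Proposition~27]{L17} for the Lipschitzian bound, whereas you instead argue that the exposing vector must lie in $\ri\dgK$ and build an explicit Lipschitzian $\mathds 1$-FRF by hand --- a correct but slightly more laborious alternative.
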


	\begin{proof}
		As is shown in Section \ref{subsec:facial-structure}, all the proper exposed faces of the generalized power cone are polyhedral.
		Then the process of facial reduction needs at most one step to reach the PPS condition. Hence, \( \dpps(\gK, \subSpace + \bm{a}) \leq 1 \).
		This shows item \ref{item:error-bound-power-cone-1}.

		If \( \dpps(\gK, \subSpace + \bm{a}) = 0 \), i.e., \eqref{eq:conic-feasibility-problem} satisfies the PPS condition, then by \cite[Corollary 3]{BBL99}, a Lipschitzian error bound holds.
		This shows item \ref{item:error-bound-power-cone-2}.

		Next, let \( \dpps(\gK, \subSpace + \bm{a}) = 1 \); i.e., we need one step to reach the PPS condition.
		In this case, the error bound depends on the exposed face \( \stdFace \) that contains the feasible region.
		If \(\stdFace =  \Fr \), then by Corollary \ref{corollary:1-FRFs-power-cones}\ref{3.6i}, we conclude that a H\"olderian error bound with exponent \( 1 / 2 \) holds.
		Remark \ref{remark:F-r-order-1/2} implies that \( \mathfrak{g} = | \cdot | ^{\frac{1}{2}}\) satisfies the asymptotic optimality criterion for \( \gK \) and \( \bm{z} \) with \( \bbz \neq \bm{0} \).
		Hence, by {\cite[Theorem 3.2]{LiLoPo21}}, the obtained H\"olderian error bound with exponent \( 1 / 2 \) is the best error bound.

		If \( \stdFace = \Fhatz \) with \( \bm{z} \in \dgK \cap \subSpace^{\perp } \cap \{\bm{a}\}^{\perp } \), then using Corollary \ref{corollary:1-FRFs-power-cones}(ii), we conclude that a H\"olderian error bound with exponent \( \beta := \sum _{i \in \stdIndex} \alpha _i \) holds, where \( \stdIndex = \{i\,| \,\tz_i > 0\} \).
		The optimality of this error bound comes from Remark \ref{remark:F-z-order-beta} and {\cite[Theorem 3.2]{LiLoPo21}}.
If \( \stdFace = \{\bm{0}\} \), which means the feasible region is \( \{\bm{0}\} \), then a Lipschitzian error bound holds automatically and it is naturally tight, see \cite[Proposition~27]{L17}.
	\end{proof}

	\section[Application: Self-duality, homogeneity, irreducibility and perfectness of the generalized power cone]{Application: Self-duality, homogeneity, irreducibility and perfectness of \( \gK \)}\label{sec:applications}
	In this section, we consider the \textit{self-duality}, \textit{homogeneity}, \textit{irreducibility} and \textit{perfectness} of \( \gK \).
	We first briefly explain the importance of those questions.

	In what follows, we need the following concepts.
	We will denote by $\aut(\stdCone)$ the group of automorphisms of $\stdCone$ which are the linear bijections $\bm{M}:\EucSpace\to \EucSpace$ such that $\bm{M} \stdCone = \stdCone$. Then, the Lie algebra of
	$\aut(\stdCone)$ denoted by $\lie \aut(\stdCone)$ corresponds to the linear maps $\bm{L}$ for which $e^{t\bm{L}} \in \aut(\stdCone)$ for all $t \in \R$ or, equivalently, is the tangent space at the identity element when $\aut (\stdCone)$ is seen as a Lie group.
	
		Recall that a cone \( \stdCone \) is called \textit{self-dual} if there exists a positive definite matrix \( \bm{Q} \) such that \( \bm{Q}\stdCone = \stdCone^{*} \).
	This is equivalent to the existence of \emph{some} inner product under which $\stdCone$ becomes self-dual, e.g., \cite[Proposition~1]{IL19}.	
	A cone is \textit{homogeneous} if for every \( \bm{x}, \bm{y} \in \ri \stdCone \), there is a matrix \( \bm{A} \in \aut(\stdCone) \) such that \( \bm{A}\bm{x} = \bm{y} \).
	A homogeneous and self-dual cone is called \textit{symmetric} \cite{FK94}.

	If a closed convex cone \( \stdCone \) can be expressed as a direct sum of two nonempty and nontrivial sets \( \stdCone_1, \stdCone_2 \subset \stdCone \), i.e., \( \stdCone = \stdCone_1 + \stdCone_2 \) with \( \stdCone_1 \neq \{\bm{0}\}, \stdCone_2 \neq \{\bm{0}\} \) and \( \Span(\stdCone_1) \cap \Span(\stdCone_2) = \{\bm{0}\} \), then \( \stdCone \) is said to be \textit{reducible}; it might not be immediately obvious, but this forces $\stdCone_1$ and $\stdCone_2$ to be convex cones, e.g., \cite[Lemma~3.2]{LS75}.
	Otherwise, \( \stdCone \) is said to be \textit{irreducible} or \textit{indecomposable}, e.g., \cite{LS75,Ba81,GT14}.

	\subsection{Some theoretical context}\label{sec:context}
		It is relatively recent that the power cone has been a subject of research in optimization. However, the power cone was first considered in the 50's by Max Koecher in the context of the so-called \emph{domains of positivity}, see \cite{Ko57}. 
	More precisely, Koecher  proposed a family of 3D cones in \cite[Section~11,d)]{Ko57} which corresponds to $\gKmn{1}{2}$, with $\alpha \in (0,1)$.
	After that, the power cone languished in relative obscurity inside the optimization community, although it was discussed briefly in \cite{TX01} and in \cite{TT03} under the name of \emph{Koecher cone}.	
	As indicated in the introduction, several works helped to revitalize the interest in power cones by showcasing modelling applications, algorithms and software \cite{Ch09, MC2020, SY15,KT19,PY21,CKV21}.

	When the power cone is bundled together in the class
	of ``non-symmetric cones'', it might be interesting to take a step back and understand two points: (a) how exactly the power cone fails to be symmetric and (b) why one should care about  this.
	
	Starting from the latter, what is special about symmetric cones is that they are supported by a powerful theory of \emph{Jordan algebras} \cite{FK94}.	Being a symmetric cone is a \emph{very} {favourable} property which was heavily exploited to develop efficient primal-dual interior point algorithms, e.g., \cite{FB08}. However, being a symmetric cone is also restrictive for {it is} known that, up to linear isomorphism, each symmetric cone is a direct product of only five types of cones. The most remarkable examples of symmetric cones are the $\R^n_+$, the real symmetric positive semidefinite matrices $\mathcal{S}^n_+$, the second-order cone and the direct products of those three.
	
	As for item (a), examining \eqref{eq:gpowercones}, we immediately see that   the dual of $\gK$ under the Euclidean inner product is just $\bm{D}\gK$, where $\bm{D}$ is a diagonal matrix with positive entries, so $\gK$ is indeed self-dual in the sense above. Thus the only gap between $\gK$ and the class of symmetric cones is the homogeneity.
	
	Given that being symmetric is very advantageous, one may reasonably wonder if the family of cones $\gK$ parametrized by $\bm{\alpha}$ and $m$ and $n$ are indeed non-homogeneous in general.
	To the best of our knowledge, although it is well-known (e.g., see comments in \cite[Section~4]{TX01}) that $\gKmn{1}{2}$ is non-homogeneous except when ${\bm\alpha} = (1/2,1/2)$,  there is no result on the generalized power cone regarding which combination of the parameters $m$, $n$ and $\bm\alpha$ leads to homogeneity or not. We fill this gap with Theorem~\ref{thm:dimension-Aut(P)} and Corollary~\ref{corollary:irreducibility-nonhomogeneous-perfect}, which tells us precisely which of the generalized power cones are homogeneous or not.
	
	We also completely determine the automorphism group of
	$\gK$. While this may seem an esoteric question, the automorphism group of a cone $\stdCone$ is intimately connected to complementarity questions over $
	\stdCone$. For example, it is known that $\bm{L}$ belongs to the Lie algebra of $\aut(\stdCone)$ if and only if the following implication holds
	\[
	\bm{x}\in \stdCone, \bm{y} \in \stdCone^*,	\inProd{\bm{x}}{\bm{y}} = 0 \Rightarrow \inProd{\bm{L}\bm{x}}{\bm{y}} = 0,
	\]
	see \cite{GT2014}. If a cone has ``enough'' automorphisms then a complementarity problem can be rewritten as a square system using the matrices from the Lie algebra of $\aut(\stdCone)$.
	In particular, when the dimension of $\aut(\stdCone)$ is at least $\dim \stdCone$, then the cone is said to be \emph{perfect}, see \cite[Page 5]{GT2014} and \cite[Theorem~1]{OG2016}. An example of this phenomenon is how the conditions $\bm{x},\bm{y} \in \R^n_+, \inProd{\bm{x}}{\bm{y}} = 0$ imply $n$ equations ${x}_i {y}_i = 0$ which is useful in several contexts.
	
	The quantity $\dim \aut \stdCone$ is called the \emph{Lyapunov rank of $\stdCone$} \cite{GT2014,GT14} and is additive with respect to direct sums \cite[Proposition~1]{GT2014}.
	Since any cone can be written as a direct sum of irreducible cones, it becomes important to identify \emph{which} irreducible cones are perfect.
	
	It is interesting to note that many of the examples of irreducible perfect cones in the literature (e.g., \cite{GT2014,GT14,OG2016}) seem to be homogeneous. In addition, every homogeneous cone is perfect, which follows by known results about Lie groups, e.g., see \cite[Theorem~21.20]{L2012} or Section~2 in \cite{Or22} which summarizes useful results. The final observation we will make in this paper is that, surprisingly, for some choices of parameters, $\gK$ is perfect but non-homogeneous, see Corollary~\ref{corollary:irreducibility-nonhomogeneous-perfect}. We note that in \cite{Sz16}, Sznajder showed that there are choices of parameters for which the so-called \emph{extended second order cone} is irreducible and perfect. This corresponds to a family of cones proposed by N\'emeth and Zhang that contains the second order cones \cite{NZ15}. However, as far as we know, the homogeneity of those cones (or the lack thereof) was not discussed in general.

	\subsection{Automorphisms of the generalized power cone}%
In this subsection, we will prove our main results regarding
$\aut(\gK)$. The basic strategy is simple: if $\bm{A} \in \aut(\gK)$, then $\bm{A}$ must map a face $\stdFace_1$ of $\gK$ to another face $\stdFace_2$ of $\gK$ with the same properties such as the dimension.
More than that, the optimal exponents associated to FRFs of $\stdFace_1$ and $\stdFace_2$ must be the same. These conditions impose enough restrictions on $\bm{A}$ that we are able to completely determine its shape. Note that when $n=2$ and ${\bm\alpha}=(1/2,1/2)$, $\gK$ is isomorphic to the second-order cone, whose automorphism group is well-known. Below, we focus on the complementary cases.


	\begin{theorem}[Automorphisms of \( \gK \)]\label{thm:self-duality-homogeneity}
			For \( m \geq 1, n > 2 \) and any \( \bm{\alpha} \in (0, 1)^n \) such that \( \sum_{i=1}^n \alpha _i = 1  \), or for \( m \geq 1, n = 2 \) and any \( \bm{\alpha} \in (0, 1)^2 \) such that \( \alpha_1 \neq \alpha _2 \) and \( \alpha _1 + \alpha _2 = 1 \), it holds that $\bm{A} \in \aut(\gK)$ if and only if
			\begin{equation}
				\label{eq:form-A-AK=K}
				\bm{A} =
				\begin{bmatrix}
					\bm{B}      & \bm{0} \\
					\bm{0} & \bm{E}
				\end{bmatrix}
			\end{equation}
       for some (invertible) {generalized permutation matrix}\footnote{A generalized permutation matrix is a matrix where in each column and each row there is exactly one nonzero entry.} {$\bm{E} \in \R^{n \times n}$} with positive nonzero entries and invertible matrix {$\bm{B} \in \R^{m \times m}$} satisfying \( \| \bm{B} \bm{x} \| = \prod_{k=1}^n (E_{k, l_k})^{\alpha _{l_k}} \| \bm{x} \|\) for all \( \bm{x} \in \R^m \),
       where $E_{k,l_k}$ is the nonzero element in the $k$-th row of $\bm{E}$ and $\alpha_{l_k} = \alpha_k$.
	\end{theorem}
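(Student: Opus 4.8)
The plan is to prove both directions, with the forward ($\bm{A} \in \aut(\gK) \implies$ block form) direction being the substantive one. The "if" direction is a routine verification: given the block form \eqref{eq:form-A-AK=K} with $\bm{E}$ a positive generalized permutation matrix and $\bm{B}$ scaling norms by the indicated factor, one checks directly that $(\bbx,\tbx) \in \gK$ iff $(\bm{B}\bbx, \bm{E}\tbx) \in \gK$, using that $\bm{E}$ permutes the coordinates of $\tbx$ (preserving $\R^n_+$) and rescales them, so that $\prod_i (\bm{E}\tbx)_i^{\alpha_i}$ equals $\prod_k (E_{k,l_k})^{\alpha_{l_k}}$ times $\prod_i \tx_i^{\alpha_i}$, which is exactly the factor by which $\|\bm{B}\bbx\|$ relates to $\|\bbx\|$. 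The identity $\sum \alpha_i = 1$ is used implicitly through the matching of exponents indexed by the permutation.

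For the forward direction, first I would record the structural consequences of Proposition~\ref{prop:facial-stru-gK}: $\gK$ has exactly two kinds of proper nontrivial exposed faces, the one-dimensional rays $\Fr$ (when $\bbz \neq \bm 0$) and the faces $\Fhatz$ of dimension $n - |\stdIndex|$ (when $\bbz = \bm 0$), and by the projectionally-exposed result all faces of $\gK$ are exposed. Since $\bm{A} \in \aut(\gK)$ preserves the facial structure, dimensions of faces, and---crucially---the optimal FRF exponents attached to each exposed face (as established in Remarks~\ref{remark:F-r-order-1/2} and \ref{remark:F-z-order-beta}, the ray faces carry exponent $1/2$ and each $\Fhatz$ carries exponent $\beta = \sum_{i\in\stdIndex}\alpha_i$), $\bm{A}$ must map ray faces to ray faces and must map each $\Fhatz$ to some $\mathcal{F}_{\bm{z}'}$ with the \emph{same} exponent. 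In particular, consider the maximal faces $\Fhatz$ with $|\stdIndex| = 1$: these are the $n$ faces obtained by setting exactly one $\tx_i = 0$, and they have dimension $n-1$. An automorphism permutes these $n$ faces, and the exponent-preservation forces the induced permutation $\sigma$ of $\{1,\dots,n\}$ to satisfy $\alpha_i = \alpha_{\sigma(i)}$ for all $i$ --- when $n>2$ or when $n=2$ with $\alpha_1\neq\alpha_2$, the extra hypotheses do not by themselves pin down $\sigma$ as the identity, but they are exactly what excludes the isomorphic-to-second-order-cone case where the rotational automorphisms would break the block structure.

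The heart of the argument is then to extract the coordinate structure. The linear span of $\gK$ is all of $\R^{m+n}$, and the subspace $\{\bm 0_m\} \times \R^n$ is recovered intrinsically as $\Span(\Fhatz)$ for the face with $\stdIndex = \emptyset$... but that face is not proper; instead, $\{\bm 0_m\}\times\R^n_+$ is itself $\gK \cap (\{\bm 0_m\}\times\R^n)$ and equals the set of $\bm x \in \gK$ lying in \emph{every} maximal face's "recession directions," or more directly: $\{\bm 0_m\}\times\R^n = \bigcap$ of the affine hulls is not quite right, so I would instead argue that $\bm{A}$ must map $\bbx = \bm 0$ points to $\bbx = \bm 0$ points by noting that $\{\bm 0_m\}\times\R^n_+$ is the unique largest face of $\gK$ that contains no point with $\|\bbx\|>0$, equivalently it is the union of all the $\Fhatz$, and it is $\bm A$-invariant; hence $\bm A$ has block-lower-triangular-looking form but the image of $\{\bm 0_m\}\times\R^n$ being $\{\bm 0_m\}\times\R^n$ forces the top-right block to vanish and, by considering $\bm A^{-1}$, the bottom-left block vanishes too. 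This gives $\bm A = \diag(\bm B, \bm E)$ with $\bm B$ invertible on $\R^m$ and $\bm E$ invertible on $\R^n$. Next, $\bm E$ must preserve $\R^n_+$ (since $\{\bm 0\}\times\R^n_+ = \gK \cap (\{\bm 0\}\times\R^n)$ is $\bm A$-invariant), and it must permute the extreme rays of $\R^n_+$, so $\bm E$ is a generalized permutation matrix with positive entries: $E_{k,l_k} > 0$ and $(\bm E\tbx)_k = E_{k,l_k}\tx_{l_k}$. Finally, feeding a generic boundary point $(\bbx,\tbx)$ with $\|\bbx\| = \prod \tx_i^{\alpha_i}$ through the membership condition $\|\bm B\bbx\| \le \prod_k(\bm E\tbx)_k^{\alpha_k} = \big(\prod_k E_{k,l_k}^{\alpha_k}\big)\prod_i \tx_i^{\alpha_i}$ and using that this must hold with equality exactly when $\|\bbx\| = \prod\tx_i^{\alpha_i}$, I conclude $\|\bm B\bbx\| = \big(\prod_k E_{k,l_k}^{\alpha_k}\big)\|\bbx\|$ for all $\bbx$, which (after re-indexing $\alpha_{l_k}=\alpha_k$) is exactly the stated condition.

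The main obstacle I anticipate is making the "exponent-preservation permutes the faces $\Fhatz$ compatibly with $\bm\alpha$" step fully rigorous --- i.e., carefully invoking the optimality machinery of \cite{LiLoPo21} to assert that an automorphism must send an exposed face with optimal exponent $\gamma$ to one with the same optimal exponent, and then deducing that the $n-1$-dimensional faces are permuted by a $\bm\alpha$-preserving permutation. A secondary subtlety is cleanly establishing the $\bm A$-invariance of $\{\bm 0_m\}\times\R^n$ (rather than merely block-triangularity); this is where one must use that $\gK$ genuinely "curves" in the $\bbx$ directions --- concretely, that no face of $\gK$ other than subsets of $\{\bm 0_m\}\times\R^n_+$ is contained in a proper coordinate subspace complementary to the $\tbx$-block --- and pairing the argument for $\bm A$ with the same for $\bm A^{-1}$ to kill both off-diagonal blocks.
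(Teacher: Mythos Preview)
Your high-level strategy---use the facial structure of $\gK$ (Proposition~\ref{prop:facial-stru-gK}) together with the optimal FRF exponents (Remarks~\ref{remark:F-r-order-1/2} and \ref{remark:F-z-order-beta}) to constrain how an automorphism can permute faces, and thereby pin down the block structure of $\bm{A}$---is the same as the paper's. Your argument that the span of the union of the $(n-1)$-dimensional faces $\Fhatz$ (those with $|\stdIndex|=1$) is $\{\bm 0_m\}\times\R^n$, hence $\bm{A}$-invariant, is a clean way to obtain $\bm{C}=\bm 0$, and matches the paper's reasoning.

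There is, however, a genuine gap in your derivation of $\bm{D}=\bm 0$. You write that ``by considering $\bm{A}^{-1}$, the bottom-left block vanishes too,'' but invariance of $\{\bm 0_m\}\times\R^n$ under $\bm{A}^{-1}$ only says that the $(1,2)$-block of $\bm{A}^{-1}$ vanishes, and once $\bm{C}=\bm 0$ this is automatic (block-lower-triangular matrices form a group). It says nothing about $\bm{D}$. To kill $\bm{D}$ you would need $\bm{A}$ to preserve $\R^m\times\{\bm 0_n\}$, but that subspace meets $\gK$ only at the origin and has no obvious face-theoretic description. The paper resolves this differently: after establishing $\bm{C}=\bm 0$ and that $\bm{E}$ is a positive generalized permutation, it takes boundary points of the form $(t^{\alpha_{l_k}}\bm e_j,\,\tx_1,\dots,t\tx_{l_k},\dots,\tx_n)$ on $\Fr$-type faces, pushes them through $\bm{A}$, and lets $t\downarrow 0$ to force $D_{k,j}\ge 0$; replacing $\bm e_j$ by $-\bm e_j$ gives $D_{k,j}\le 0$. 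An alternative repair in the spirit of your approach is a duality argument: since $\dgK=\bm{D}_{\bm\alpha}\gK$ for a positive diagonal matrix, one has $\bm{A}^{-\top}\in\aut(\dgK)$, and the same ``$\bm{C}=\bm 0$'' reasoning applied to $\bm{A}^{-\top}$ forces its $(1,2)$-block $-\bm{B}^{-\top}\bm{D}^\top\bm{E}^{-\top}$ to vanish, whence $\bm{D}=\bm 0$. Either route is needed; as written, your step fails.

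A secondary point: you assert that ray faces $\Fr$ must go to ray faces by exponent preservation, but for $n>2$ it can happen that some one-dimensional $\Fhatz$ (with $|\stdIndex|=n-1$) has exponent $\sum_{i\in\stdIndex}\alpha_i=1/2$, so exponents alone do not separate them. The paper handles this explicitly by noting that, once $\bm{C}=\bm 0$ and $\bm{B}$ is invertible, a point in $\Fr$ has $\bbx\neq\bm 0$ and therefore cannot be sent to a point with first block $\bm 0$. In your outline this case is not accounted for, though if you first secure the full block-diagonal form $\bm{A}=\mathrm{diag}(\bm{B},\bm{E})$ the issue becomes moot.
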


	\begin{proof}
		Suppose that there exists a matrix
		\[ \bm{A} :=
		\begin{bmatrix}
			\bm{B} & \bm{C} \\
			\bm{D} & \bm{E}
		\end{bmatrix} {\text{ with } \bm{B} \in \R^{m \times m}, \,\bm{C} \in \R^{m \times n}, \,\bm{D} \in \R^{n \times m}, \,\bm{E} \in \R^{n \times n}} \]
		such that \( \bm{A}\gK = \gK \).

		First note that the entries of $\bm{E}$ must all be nonnegative, for if the $(i,j)$-th entry was negative, then we could pick a vector \( \bm{q} := (\bm{0}, \bm{c}) \in \gK \) with $c_j=1$ and $c_k=0$ for $k \neq j$, wherefore \( \bm{A}\bm{q} \not\in \gK \), which is a contradiction.

		Additionally, such a matrix $\bm{A}$ must be invertible and if \( \psi  \) is an FRF for a face \( \stdFace_1 \unlhd \gK \), then \( \bm{A} \) must map \( \stdFace_1 \) onto a face \( \stdFace_2 \unlhd \gK \) which has the same dimension and admits an FRF that is a positively rescaled shift of \( \psi  \); see~\cite[Proposition 17]{L17}.

		Observe from Section~\ref{subsec:facial-structure} that the generalized power cone has two types of faces defined in \eqref{eq:1st-type-face-ray} and \eqref{eq:2nd-type-face-d-dim} (denoted by $\Fr$ and $\Fhatz$ respectively with an abuse of notation) with the corresponding (optimal) one-step facial residual functions in \eqref{eq:1-FRF-1st-face} and \eqref{eq:1-FRF-2nd-face}, respectively.
		We also notice that the dimension of the faces of the first type is 1, while the dimension of a face of the second type is \( n - | \stdIndex | \). These lead to the following observations:
\begin{enumerate}[label=(\Roman*)]
  \item Given an \( \stdIndex \) with \( \beta _{\stdIndex} := \sum_{i \in \stdIndex} \alpha _i  \), if \( | \stdIndex | < n - 1 \), i.e., the dimension of the corresponding face is larger than \( 1 \), then \( \bm{A} \) must map the face associated with \( \stdIndex \) to a face associated with an \( \bar{\stdIndex} \) where \( | \stdIndex |  = |\bar{\stdIndex}| \) and \( \beta _{\bar{\stdIndex}} = \beta _\stdIndex \).
  \item In the case when $n = 2$, since we assumed $\alpha_1 \neq \alpha_2$ and thus $\alpha_1\neq 1 / 2$, $\bm{A}$ cannot map a one-dimensional face of type $\Fhatz$ (whose FRF admits an optimal exponent of $\alpha_1$ or $\alpha_2$) to one of type $\Fr$ (whose FRF admits an optimal exponent of $1 / 2$).
\end{enumerate}

Thus, a face of type $\Fhatz$ with $|\stdIndex| = 1$ must be mapped to a face of the same type.
From now on, for each $k\in \{1,2,\ldots,n\}$, we let $i_k$ and $l_k$ be such that $\bm{A} \stdFace_{\{k\}} = \stdFace_{\{i_k\}}$ and $\bm{A}\stdFace_{\{l_k\}} =  \stdFace_{\{k\}}$, where $\stdFace_{\{k\}}$ denotes the face of type $\Fhatz$ associated with $\stdIndex = \{k\}$. We deduce immediately from the above discussions that $\{1,2,\ldots,n\} = \{i_1,i_2,\ldots,i_n\} = \{l_1,l_2,\ldots,l_n\}$ and $\alpha_k = \alpha_{i_k} = \alpha_{l_k}$.

		Now,  fix any \( k \in \{1, 2, \dots , n\} \).
		Then for any \( \tbx_{\stdIndex} := (c_1,\dots, c_{k-1}, 0, c_{k+1},\dots, c_n) \) with $c_i > 0$ for all $i \neq k$, it must hold that \( \bm{A} \) maps \( \bm{x}_{\stdIndex} := (\bm{0}, \tbx_{\stdIndex}) \) to some \( \bm{x}_{\hat{\stdIndex}} := (\bm{0}, \tbx_{\hat{\stdIndex}}) \) with \( \hat{\stdIndex} = \{i_k\},\,\alpha _k = \alpha _{i_k} \) and \( (\tbx_{\hat{\stdIndex}})_{i_k} = 0 \).
		Thus,
		\[
		\begin{bmatrix}
			\bm{B} & \bm{C} \\
			\bm{D} & \bm{E}
		\end{bmatrix}
		\begin{bmatrix}
			\bm{0} \\
			\tbx_{\stdIndex}
		\end{bmatrix} =
		\begin{bmatrix}
			\bm{0} \\
			\tbx_{\hat{\stdIndex}}
		\end{bmatrix}.
		\]
		Therefore, we have \( \bm{C}\tbx_{\stdIndex} = \bm{0} \). This together with the arbitrariness of $c_i > 0$ shows that all except possibly the \( k \)-th column of \( \bm{C} \) are \( \bm{0} \).
		Since \( k \) is arbitrary, then we conclude that \( \bm{C} = \bm{0} \).

		Next, notice that we also have \( \bm{E}\tbx_{\stdIndex} = \tbx_{\hat{\stdIndex}} \). Since \( (\tbx_{\hat{\stdIndex}})_{i_k} = 0 \), we see that \( E_{i_k} \tbx_{\stdIndex} = 0 \), where \( E_{i_k} \) is the \( i_k \)-th row of \( \bm{E} \). Using again the arbitrariness of $c_i > 0$ in the definition of $\tbx_{\stdIndex}$, we conclude that all entries of \( E_{i_k} \) are 0 except possibly for the \( k \)-th entry, i.e., \( E_{i_k} \) has only one possibly nonzero entry and that entry is nonnegative.
		From the arbitrariness of \( k \) and the fact that \( \{i_1, i_2, \dots ,i_n\} = \{1, 2, \dots , n\} \), we immediately obtain that every entry of the $i_k$-th row $\bm{E}$ has all of its entries equal to zero except possibly for the $k$-th, which is nonnegative.

		Taking into account of the fact that $\bm{A}$ is invertible and $\bm{C}=\bm{0}$, we know that none of the columns of $\bm{E}$ can be identically zero, and so we altogether have that each of the rows and columns of $\bm{E}$ consists of one strictly positive entry, with all other entries identically zero.\footnote{\label{footnote:k,ik,lk}Then, we have shown that \( E_{s, r} \neq 0 \) if and only if \( (s, r) = (i_k, k) \) for some \( k \in \{1, 2, \dots , n\} \) (or equivalently \( (s, r) = (k, l_k) \) for some \( k \in \{1, 2, \dots , n\} \)). }

We next claim that $\bm{A}$ must map faces of type $\Fr$ to a face of type $\Fr$. Since $\bm{A}$ must permute faces whose FRFs admit the same optimal exponent, we only need to consider the extreme case that there exists a face of type $\Fhatz$ corresponding to an \( \stdIndex := \{1, 2, \dots , i - 1, i + 1, \dots , n\} \) for some \( i \) (i.e., the dimension of the corresponding face is 1) with \( \beta _{\stdIndex} = 1 / 2 \), and argue that $\bm{A}$ cannot map $\Fr$ onto such $\Fhatz$.
Suppose for contradiction that this happens; then there must exist \( {\bm x} = (\bbx, \tbx) \) in some face of type $\Fr$ with \( \bbx \neq \bm{0} \) and \( \tx_i > 0 \) for all $i$ such that
		\[
		\begin{bmatrix}
			\bm{B} & \bm{0} \\
			\bm{D} & \bm{E}
		\end{bmatrix}
\begin{bmatrix}
			\bbx \\
			\tbx
		\end{bmatrix}
		=
\begin{bmatrix}
			\bm{0} \\
			\bm{e}_{i}
		\end{bmatrix},
		\]
		where \( \bm{e}_{i}\in \R^n \) is the vector whose elements are all zero except for the \( i \)-th element being 1.
		However, this cannot happen because $\bm{B}\bbx = {\bm 0}$ and the invertibility of $\bm{B}$ (a consequence of invertibility of $\bm{A}$) implies $\bbx = {\bm 0}$, leading to a contradiction.
		Hence, \( \bm{A} \) must map faces of type $\Fr$ onto a face of type $\Fr$.

		Thus, for any \( \bm{x} = (\bbx, \tbx) \) in one of the type $\Fr$ faces with \( \bbx \neq \bm{0}\), \(\min_i\{\tx_i\} > {0} \) and \( \| \bbx \| = \prod_{i=1}^n \tx_i^{\alpha _i} \), there must be \( \bm{y} = (\bby, \tby) \) in one of the type $\Fr$ faces with \( \bby \neq \bm{0}\), \(\min_i\{\ty_i\} > {0} \) and \( \| \bby \| = \prod_{i=1}^n \ty_i^{\alpha _i} \) such that
		\[
		\begin{bmatrix}
			\bm{B} & \bm{0} \\
			\bm{D} & \bm{E}
		\end{bmatrix}
		\begin{bmatrix}
			\bbx \\
			\tbx
		\end{bmatrix} =
		\begin{bmatrix}
			\bby \\
			\tby
		\end{bmatrix}.
		\]
		Recall that there is exactly one nonzero element in each row of \( \bm{E} \), and this element is positive. From the definition of $l_k$, this nonzero element is $E_{k,l_k}$; see footnote \ref{footnote:k,ik,lk}.

		Fix any $j$ and $k\in \{1,\dots,n\}$. Pick any \((\bbx,\tbx)\in \gK\) such that \( \bbx = \bm{e}_j \) and $\prod^n_{i=1}\tx_i^{\alpha_i}=1$. For any $t > 0$, one can check that \((t^{\alpha_{l_k}}\bbx,\tx_1,\cdots,t\tx_{l_k},\cdots,\tx_n)\in \gK\) belongs to a face of type $\Fr$. Thus, there exists $(\bby,\tby)$ such that
		\[
			t^{\alpha_{l_k}}\bm{B} \bm{e}_j = \bby\ \ \ {\rm and}\ \ \
			t^{\alpha_{l_k}}D_{k,j} + tE_{k, l_k}\tx_{l_k}  = \ty_k > 0.
		\]
The second relation implies that $D_{k,j} + t^{1-\alpha_{l_k}}E_{k, l_k}\tx_{l_k} > 0$. Letting $t\downarrow 0$, we conclude that $D_{k,j}\ge 0$. As the choices of $j$ and $k$ were arbitrary, we see that all entries of $\bm{D}$ are nonnegative. Considering \( \bbx = -\bm{e}_j \), a similar argument shows that all entries of $\bm{D}$ are nonpositive. Hence, \( \bm{D} = \bm{0} \). 

		Now, for any $\bbx \in \R^m$, pick any \((\bbx,\tbx)\in \bd\gK\). Then there exists \((\bby,\tby)\in \bd\gK\) so that\footnote{Such a ${\bm y}$ exists because $\bm{A}$ is invertible and $\bm{A}\gK = \gK$, which implies $\bm{A} \ri\gK = \ri\gK$ and $\bm{A}\bd \gK = \bd \gK$.}
		\[
			\bm{B} \bbx = \bby \ \ \ {\rm and}\ \ \
			E_{k, l_k}\tx_{l_k}  = \ty_k\ \ {\rm for}\ k = 1, 2, \dots, n.
		\]
		Thus,
		\[
\begin{aligned}
  \| \bm{B}\bbx \| &= \| \bby \| = \prod_{k=1}^n \ty_k^{\alpha_k} = \prod_{k=1}^n (E_{k, l_k} \tx_{l_k})^{\alpha_{k}} \overset{\rm (a)}= \prod_{k=1}^n (E_{k, l_k} \tx_{l_k})^{\alpha_{l_k}}  \\
  &= \prod_{k=1}^n E_{k, l_k}^{\alpha _{l_k}} \prod_{i=1}^n \tx_i^{\alpha _i} = \prod_{k=1}^n E_{k, l_k}^{\alpha _{l_k}} \| \bbx \|.
\end{aligned}
\]
where (a) holds as $\alpha_{k} = \alpha_{l_k}$ for all $k$.
		The above shows the necessity of the form in \eqref{eq:form-A-AK=K}.

		Conversely, if \( \bm{A} \) is a matrix of the form \eqref{eq:form-A-AK=K}, then \( \bm{A} \) must be invertible since \( \bm{B} \) and \( \bm{E} \) are invertible.
		For any \( \bm{x} = (\bbx, \tbx) \in \gK \), we have \( \bm{A} \bm{x} = (\bm{B}\bbx, \bm{E}\tbx) \).
		Hence,
		\[ \| \bm{B}\bbx \| = \prod_{k=1}^n E_{k, l_k}^{\alpha _{l_k}} \| \bbx \| \leq \prod_{k=1}^n E_{k, l_k}^{\alpha _{l_k}} \prod_{i=1}^n \tx_i^{\alpha _i} =  \prod_{k=1}^n \left( E_{k, l_k} \tx_{l_k} \right)^{\alpha _{l_k}}, \]
		where the last equality holds as $\{1,\ldots,n\}= \{l_1,\ldots,l_n\}$.
		This implies \( \bm{A}\gK \subseteq \gK \).

		We claim
		\begin{equation}\label{eq:claims-E-inv-B-inv}
			\text{(i) } \left(\bm{E}^{-1}\right)_{i, j} =
			\begin{cases}
				0,                  & E_{j, i} = 0,    \\
				\frac{1}{E_{j, i}}, & E_{j, i} \neq 0.
			\end{cases}
			\quad \text{(ii) } \| \bm{B}^{-1}\bm{x} \| = \prod_{k=1}^n E_{k, l_k}^{-\alpha _{l_k}}\| \bm{x} \| \ \ \ \forall \bm{x} \in \R^m.
		\end{equation}
		Granting these, we have that for any \( \bm{x} = (\bbx, \tbx) \in \gK \), \( \bm{A}^{-1}\bm{x} = (\bm{B}^{-1}\bbx, E^{-1} \tbx) \) satisfies\vspace{-0.1 cm}
\begin{align}\vspace{-0.1 cm}
\prod_{i=1}^n (\bm{E}^{-1} \tbx)_i^{\alpha_i} &= \prod_{i=1}^n \bigg( \sum_{j=1}^n (\bm{E}^{-1})_{i, j} \tx_j \bigg)^{\alpha _i} \overset{{\rm (a)}}{=} \prod_{k=1}^n \left((\bm{E}^{-1})_{l_k,k} \tx_k\right)^{\alpha_{l_k}} = \prod_{k=1}^n (E_{k,l_k}^{-1} \tx_k)^{\alpha_{l_k}} \notag\\
&= \prod_{k=1}^n  E_{k,l_k}^{-\alpha_{l_k}}  \prod_{i=1}^n  \tx_i ^{\alpha _{l_i}} \overset{\rm (b)}=  \prod_{k=1}^n  E_{k,l_k}^{-\alpha_{l_k}}  \prod_{i=1}^n  \tx_i ^{\alpha _i} \geq \prod_{k=1}^n  E_{k, l_k}^{-\alpha _{l_k}}  \| \bbx \|  \overset{\text{(c)}}{=} \| \bm{B}^{-1} \bbx \|,\notag
\end{align}
where (a) is true thanks to the fact that in the sum there is only one nonzero term, which comes from identity (i) and footnote \ref{footnote:k,ik,lk}; (b) holds because $\alpha_k = \alpha_{l_k}$ for all $k$; (c) comes from identity (ii).
		Hence, $\bm{A}^{-1}\bm{x}\in \gK $. This implies \( \bm{A}\gK \supseteq \gK \) and consequently \( \bm{A}\gK = \gK \).

		Now, it remains to show \eqref{eq:claims-E-inv-B-inv}.
		Since \( \bm{E} \) is a generalized permutation matrix with all nonzero elements being positive, then we immediately have (i) from \( \bm{E}\bm{E}^{-1} = \bm{I}_n \).
		Recall that, by assumption, \( \| \bm{B}\bm{x} \| = \prod_{k=1}^n E_{k, l_k}^{\alpha _{l_k}} \| \bm{x} \| \) for any \( \bm{x} \in \R^m \) and  \( \bm{B} \) is invertible.
		Using these, we can deduce (ii) in \eqref{eq:claims-E-inv-B-inv} as follows: for any \( \bm{x} \in \R^m \),\vspace{-0.1 cm}
		\[ \vspace{-0.1 cm}
\| \bm{x} \| = \| \bm{B}\bm{B}^{-1} \bm{x} \| = \prod_{k=1}^n  E_{k, l_k} ^{\alpha _{l_k}} \| \bm{B}^{-1} \bm{x} \|.
\]
	\end{proof}

	The next theorem is about the dimension of \( \aut(\gK) \).
	\begin{theorem}\label{thm:dimension-Aut(P)}
		Let \( m \geq 1 \), \( n \geq 2 \) and \( \bm{\alpha} \in (0, 1)^n \) such that \( \sum_{i=1}^n \alpha _i = 1 \), then we have the following statements about \( \dim\aut(\gK) \).
		\begin{enumerate}[label={(\roman*)}]
			\item\label{item:dimension-Aut(P)-n=2} If \( m \geq 1 \), \( n = 2 \) and \( \bm{\alpha} := (1 / 2, 1 / 2) \), then \( \dim\aut(\gK) = (m^2 + 3m + 4) / 2 \).
			\item\label{item:dimension-Aut(P)-nonsymmetric} If \( m \geq 1 \), \( n > 2 \) and \( \sum_{i=1}^n \alpha _i = 1 \) or \( m \geq 1 \), \( n = 2 \), \( \alpha_1 \neq \alpha _2 \) and \( \alpha_1 + \alpha _2 = 1 \), then:\vspace{-0.1 cm}
			\begin{equation}\vspace{-0.1 cm}
				\label{eq:form-lie-aut-power-cone}
				\lie\aut(\gK) = \left\{
				\begin{bmatrix}
					\bm{G} & \bm{0}  \\
					\bm{0}  & \diag(\bm{h})
				\end{bmatrix} \Bigg| \,
                \begin{split}
				  & \bm{G} + \bm{G}^{\top}  = 2\bm{\alpha}^{\top} \bm{h} \bm{I}_m, \\
				  & \bm{G} \in \R^{m\times m}, \, \bm{h}\in \R^n
                \end{split}
				\right\}.
			\end{equation}
			Hence, \( \dim\aut(\gK) = \dim\lie\aut(\gK) = n + m(m - 1) / 2 \).
		\end{enumerate}
	\end{theorem}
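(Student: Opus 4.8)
The strategy is to exploit the explicit description of $\aut(\gK)$ furnished by Theorem~\ref{thm:self-duality-homogeneity}, handling the two items separately; throughout, I would use the standard fact that $\aut(\gK)$, being the stabilizer of a full-dimensional closed convex cone, is a closed subgroup of $GL(m+n,\R)$ and hence a Lie group with $\dim\aut(\gK) = \dim\lie\aut(\gK)$. For item~\ref{item:dimension-Aut(P)-n=2}, when $n=2$ and $\bm\alpha = (1/2,1/2)$ the cone $\gK = \{(\bbx,\tx_1,\tx_2)\,|\,\|\bbx\|\le\sqrt{\tx_1\tx_2},\ \tbx\in\R^2_+\}$ is linearly isomorphic to the $(m+2)$-dimensional second-order cone: the substitution $s := \tx_1 + \tx_2$, $r := \tx_1 - \tx_2$ gives $4\tx_1\tx_2 = s^2 - r^2$, so after also rescaling $\bbx$ the defining condition reads $\|(\bbx, r)\| \le s$. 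Linearly isomorphic cones have conjugate automorphism groups, hence the same dimension, so $\dim\aut(\gK) = \dim\aut(\mathcal{L}^{m+2})$; and the automorphism group of $\mathcal{L}^d$ is the well-known group of positive scalings composed with orthochronous Lorentz transformations, of dimension $1 + \binom{d}{2}$. Taking $d = m+2$ gives $1 + \binom{m+2}{2} = (m^2 + 3m + 4)/2$.

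For item~\ref{item:dimension-Aut(P)-nonsymmetric}, I would compute $\lie\aut(\gK) = \{\bm L\,|\,e^{t\bm L}\in\aut(\gK)\ \text{for all}\ t\in\R\}$ directly. Since by Theorem~\ref{thm:self-duality-homogeneity} every element of $\aut(\gK)$ is block-diagonal, so is $e^{t\bm L}$ for all $t$, and differentiating at $t = 0$ forces $\bm L = \begin{bmatrix}\bm G & \bm 0\\\bm 0 & \bm M\end{bmatrix}$ with $\bm G\in\R^{m\times m}$ and $\bm M\in\R^{n\times n}$, whence $e^{t\bm L} = \begin{bmatrix}e^{t\bm G} & \bm 0\\\bm 0 & e^{t\bm M}\end{bmatrix}$. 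By Theorem~\ref{thm:self-duality-homogeneity}, $e^{t\bm M}$ is a generalized permutation matrix with positive entries for every $t$; since $t\mapsto e^{t\bm M}$ is continuous and equals $\bm I_n$ at $t = 0$, its underlying permutation is constantly the identity, so $e^{t\bm M}$ is a positive diagonal matrix, and differentiating gives $\bm M = \diag(\bm h)$ for some $\bm h\in\R^n$. With $\bm E := e^{t\bm M}$ diagonal the permutation in Theorem~\ref{thm:self-duality-homogeneity} is trivial, so its norm condition reads $\|e^{t\bm G}\bm x\| = \prod_{k=1}^n(e^{th_k})^{\alpha_k}\|\bm x\| = e^{tc}\|\bm x\|$ with $c := \bm\alpha^\top\bm h = \sum_{k=1}^n\alpha_k h_k$; hence $e^{t(\bm G - c\bm I_m)}$ is orthogonal for all $t$, forcing $\bm G - c\bm I_m\in\mathfrak{so}(m)$, i.e.\ $\bm G + \bm G^\top = 2c\bm I_m = 2\bm\alpha^\top\bm h\,\bm I_m$. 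The converse inclusion is immediate: given $\bm G$ with $\bm G + \bm G^\top = 2\bm\alpha^\top\bm h\,\bm I_m$, write $\bm G = c\bm I_m + \bm S$ with $c := \bm\alpha^\top\bm h$ and $\bm S$ skew-symmetric; then $e^{t\bm G} = e^{tc}e^{t\bm S}$ satisfies the required norm identity and $\begin{bmatrix}e^{t\bm G} & \bm 0\\\bm 0 & \diag(e^{th_1},\dots,e^{th_n})\end{bmatrix}$ fulfils all the conditions of Theorem~\ref{thm:self-duality-homogeneity}. This establishes \eqref{eq:form-lie-aut-power-cone}.

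Finally, to count the dimension I would parametrize $\lie\aut(\gK)$ by $(\bm h,\bm S)\in\R^n\times\mathfrak{so}(m)$ via $\bm L = \begin{bmatrix}\bm\alpha^\top\bm h\,\bm I_m + \bm S & \bm 0\\\bm 0 & \diag(\bm h)\end{bmatrix}$; this linear map is injective, since the lower-right block recovers $\bm h$, hence $c$, and then $\bm S$, so $\dim\lie\aut(\gK) = n + \dim\mathfrak{so}(m) = n + m(m-1)/2$, and therefore $\dim\aut(\gK) = n + m(m-1)/2$. The step that requires the most care --- the only one that goes beyond bookkeeping on top of Theorem~\ref{thm:self-duality-homogeneity} and elementary properties of the matrix exponential --- is the passage from ``$e^{t\bm M}$ is a generalized permutation matrix for all $t$'' to ``$\bm M$ is diagonal,'' which uses the discreteness of the symmetric group together with continuity of $t\mapsto e^{t\bm M}$ to pin down the underlying permutation.
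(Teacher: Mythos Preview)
Your proposal is correct and follows essentially the same route as the paper: both reduce item~\ref{item:dimension-Aut(P)-n=2} to the known second-order-cone case, and for item~\ref{item:dimension-Aut(P)-nonsymmetric} both exploit the block-diagonal form from Theorem~\ref{thm:self-duality-homogeneity}, pin down the lower block as diagonal by continuity near the identity, translate the norm condition into $\bm{G}+\bm{G}^\top = 2\bm{\alpha}^\top\bm{h}\,\bm{I}_m$, and verify the converse via the matrix exponential. The only cosmetic difference is that you work with one-parameter subgroups $t\mapsto e^{t\bm L}$ and the observation that $e^{t(\bm G-c\bm I_m)}$ orthogonal forces $\bm G-c\bm I_m$ skew, whereas the paper takes general $C^1$ curves through the identity and differentiates the squared-norm identity directly; these are equivalent standard computations of the same Lie algebra.
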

	\begin{proof}
		\ref{item:dimension-Aut(P)-n=2} If \( m \geq 1 \), \( n = 2 \) and \( \bm{\alpha} := (1 / 2, 1 / 2) \), then \( \gK \) is isomorphic to a second-order cone; see, \cite[Section~3.1.2]{MC2020}.
			Hence, we know from \cite[Page 12 (v)]{GT2014} that\vspace{-0.1 cm}
			\[ \vspace{-0.1 cm}
\dim\aut(\gK) = \frac{(m+2)^2 - m }{2} = \frac{m^2 + 3m + 4}{2}.
\]

			\ref{item:dimension-Aut(P)-nonsymmetric} By \cite[Corollary 3.45]{H2015}, \( \dim \aut(\gK) = \dim\lie\aut(\gK) \).
			This in addition to \cite[Corollary 3.46]{H2015} show that it suffices to calculate the dimension of the tangent space at the identity of \( \aut(\gK) \) to obtain \( \dim\aut(\gK) \).
			
			First, we compute $\lie \aut(\gK)$ and for that we consider an arbitrary continuously differentiable curve \( {\bm F} \!:\! (-1, 1) \!\to\! \aut(\gK) \) with \( {\bm F}(0) = \bm{I}_{m + n} \) and \( {\bm F}(t) \in \aut(\gK) \) for any \( t \in (-1, 1) \).
			We further denote
			\[ {\bm F}(t) =
			\begin{bmatrix}
				\bm{G}_t    & \bm{0} \\
				\bm{0} & \bm{H}_t
			\end{bmatrix} \quad \text{ and } \quad \dot{{\bm F}}(t) =
			\begin{bmatrix}
				\dot{\bm{G}}_t & \bm{0}    \\
				\bm{0}    & \dot{\bm{H}}_t
			\end{bmatrix},
			\]
			where \( \bm{G}_t \in \R^{m \times m} \) and \( \bm{H}_t \in \R^{n \times n} \) are both invertible; \( \bm{G}_0 = \bm{I}_m \), \( \bm{H}_0 = \bm{I}_n \); \( \bm{H}_t \) is a generalized permutation matrix with all nonzero elements being strictly positive (which we assume, by suitably shrinking the neighborhood of definition of ${\bm F}$ and reparameterizing, to be only nonzero along the diagonal); \( \dot{{\bm F}}(0) \) lies in the tangent space of \( \aut(\gK) \) at \( \bm{I} \), {that is,
			\begin{equation}
			  \label{eq:F(0)-in-Lie-Aut-gK}
			  \dot{\bm{F}}(0) =
			\begin{bmatrix}
				\dot{\bm{G}}_0 & \bm{0}    \\
				\bm{0}    & \dot{\bm{H}}_0
			\end{bmatrix} \in \lie\aut(\gK);
			\end{equation}}
		  \( \dot{\bm{G}}_t \) and \( \dot{\bm{H}}_t \) refer to the {componentwise} derivative of \( \bm{G} \) and \( \bm{H} \) with respect to \( t \), respectively.

{Since $\bm{H}_t$ and $\dot{\bm{H}}_t$ are diagonal, we let $\bm{h}_t$ and $ \dot{\bm{h}}_t $ be the diagonal vectors of $ \bm{H}_t $ and $ \dot{\bm{H}}_t $, respectively, i.e., $\bm{H}_t = \diag({\bm h}_t)$ and  $\dot{\bm{H}}_t = \diag(\dot{\bm h}_t)$.} We also let $h^k_t$ and $\dot h^k_t$ denote the $k$-th element of the vectors ${\bm h}_t$ and $\dot{\bm h}_t$ respectively.
			Then, from Theorem \ref{thm:self-duality-homogeneity},\vspace{-0.1 cm}
			\begin{equation}\label{eq:relationship-between-gt-ht}\vspace{-0.1 cm}
				\| \bm{G}_t\bm{x} \|^2 = \prod_{k=1}^n ( h_t^k )^{2\alpha_k} \| \bm{x} \|^2, \quad \forall \, \bm{x} \in \R^m, \, \forall \, t \in (-1, 1).
			\end{equation}
			Differentiating\footnote{This calculation simply uses the chain rule to differentiate $(h_t^k)^{2\alpha_{k}}$ for a given $k$, and then applies the product rule for the product over all $k$.} both sides of \eqref{eq:relationship-between-gt-ht} with respect to $t$, we can obtain
			\begin{align}
				& 2\bm{x}^{\top}  \bm{G}_t ^{\top} \dot{\bm{G}}_t \bm{x} = \bm{x}^{\top} \bm{x} \sum_{k=1}^n  2\alpha_{k}  \left( h_t^k \right) ^{2\alpha _{k} - 1} \dot{h}_t^k \prod_{j \neq k} \left( h_t^j \right)^{2\alpha _{j}} = \bm{x}^{\top} \bm{x} \sum_{k=1}^n  2\frac{\alpha _{k}}{h_t^k} \dot{h}_t^k \prod_{j=1}^n \left( h_t^j \right)^{2\alpha _{j}} \notag\\
				& = 2\bigg( \bm{x}^{\top} \bm{x} \prod_{j=1}^n ( h_t^j )^{2\alpha _{j}} \bigg) \sum_{k=1}^n  \frac{\alpha _{k}}{h_t^k} \dot{h}_t^k \stackrel{\rm (a)}{=} 2 \bm{x}^{\top} \bm{G}_t^{\top} \bm{G}_t \bm{x} \left( \bm{\alpha }\circ({\bm h}_t)^{-1} \right)^{\top} \dot{\bm h}_t ,\notag
			\end{align}
			{where the inverse is taken componentwise}, and the rest of (a) comes from \eqref{eq:relationship-between-gt-ht}.
			Notice that \( \left( \bm{\alpha }\circ({\bm h}_t)^{-1} \right)^{\top} \dot{\bm h}_t \) is a scalar, by rearranging terms, one has
			\[ \bm{x}^{\top} \left[  \bm{G}_t ^{\top} \dot{\bm{G}}_t - \left( \bm{\alpha }\circ({\bm h}_t)^{-1} \right)^{\top} \dot{\bm h}_t \bm{G}_t^{\top} \bm{G}_t   \right]\bm{x} = 0, \quad \forall \, \bm{x} \in \R^m, \ \forall \, t \in (-1, 1). \]
			Letting \( t = 0 \) and recalling \( \bm{G}_0 = \bm{I}_m, \bm{H}_0 = \bm{I}_n \), we have
			\begin{equation}\label{eqn:g0symmetrizing}
				\bm{x}^{\top} \left( \dot{\bm{G}}_0 -  \bm{\alpha}^{\top} \dot{\bm h}_0 I_m \right) \bm{x} = 0, \quad \forall \, \bm{x} \in \R^m.
			\end{equation}
Recall that $2\bm{x}^{\top} \dot{\bm{G}}_0 \bm{x} = \bm{x}^{\top} (\dot{\bm{G}}_0+\dot{\bm{G}}_0^{\top}) \bm{x}$. We can thus rewrite \eqref{eqn:g0symmetrizing} as
			\[ \bm{x}^{\top} \left( \dot{\bm{G}}_0+\dot{\bm{G}}_0^{\top} - 2\bm{\alpha}^{\top} \dot{\bm h}_0 I_m \right) \bm{x} = 0, \quad \forall \, \bm{x} \in \R^m. \]
			Since the matrix in the parentheses is zero, the above display implies that
			\[ \dot{\bm{G}}_0 + \dot{\bm{G}}_0^{\top} = 2 \bm{\alpha}^{\top} \dot{\bm h}_0 I_m. \]
{The above derivation and \eqref{eq:F(0)-in-Lie-Aut-gK} show that any matrix in $\lie\aut(\gK)$ satisfies the above display.}

Conversely, suppose that $\bm{G}$ and $\diag(\bm h)$ are such that $\bm{G}+\bm{G}^\top  = 2\bm{\alpha}^{\top} \bm{h} I_m$ and
$\bm{U} := 	\begin{bmatrix}
\bm{G} & \bm{0}  \\
\bm{0}  & \diag(\bm{h})
\end{bmatrix} $. We need to show that the matrix exponential $e^{t\bm{U}}$ belongs to $\aut(\gK)$  for every $t \in \R$.
To this end, recall that $e^{\bm{X}+\bm{Y}} = e^{\bm{X}}e^{\bm{Y}}$ {if $\bm{X}\bm{Y} = \bm{Y}\bm{X}$}, we have
\[
e^{t\bm{G}} = e^{2t\bm{\alpha}^{\top} \bm{h} I_m - t\bm{G}^\top} = e^{2t\bm{\alpha}^{\top} \bm{h} \bm{I}_m} e^{-t\bm{G}^\top} = e^{2t\bm{\alpha}^{\top} \bm{h}} e^{- t\bm{G}^\top},
\]
{since $2t\bm{\alpha}^{\top} \bm{h} \bm{I}_m$ and $-t\bm{G}^\top$ commute}.
This shows that $(e^{t\bm{G}})^{\top}e^{t\bm{G}} =\!e^{t\bm{G}^\top}e^{t\bm{G}}
= e^{2t\bm{\alpha}^{\top} \bm{h}} \bm{I}_m$, i.e., $e^{t\bm{G}}$ is an orthogonal matrix multiplied by the scalar $e^{t\bm{\alpha}^{\top} \bm{h}}$.
Then\vspace{-0.15 cm} 
\begin{equation}\label{hahahahaha}\vspace{-0.15 cm}
\|e^{t\bm{G}}\bm{x}  \| = e^{t \bm{\alpha}^{\top}\bm{h}}\| \bm{x} \| =
e^{\sum_{i=1}^n t {h}_i {\alpha}_i}\| \bm{x} \|=  \prod _{i=1}^n {(e^{t{h}_i})}^{\alpha_i}  \| \bm{x} \|\ \ \ \forall \bm x \in \R^m.
\end{equation}
Since\vspace{-0.1 cm}
\begin{equation*}\vspace{-0.1 cm}
e^{t\bm{U}} = \begin{bmatrix}
e^{t\bm{G}} & \bm{0}         \\
\bm{0}  & e^{\diag(t\bm{h})}
\end{bmatrix} = \begin{bmatrix}
e^{t\bm{G}} & \bm{0}         \\
\bm{0}  & {{\diag(e^{t\bm{h}})}}
\end{bmatrix},
\end{equation*}
where $e^{t\bm{h}}$ corresponds to the vector such that its $i$-th component is $e^{th_i}$ and $h_i$ is the $i$-th component of $\bm{h}$, we conclude from \eqref{hahahahaha} and Theorem~\ref{thm:self-duality-homogeneity} that
$e^{t\bm{U}} \in \aut(\gK)$.

Finally, a direct computation shows that the dimension of the right-hand side of \eqref{eq:form-lie-aut-power-cone} is $n+m(m-1)/2$, which is just the claimed dimension.
	\end{proof}

	\subsection{Homogeneity, irreducibility and perfectness of generalized power cone}\label{subsec:nonhomogeneity-irreducibility-perfectness}
	In this subsection, we will use Theorem \ref{thm:self-duality-homogeneity} to prove the homogeneity, irreducibility and perfectness of $\gK$.
	Before moving on, we recall the following lemma.
	\begin{lemma}\label{lemma:reducibility-perfectness}
		\begin{enumerate}[label=(\roman*)]
			\item\label{item:reducibility-dim-face} If a closed convex pointed cone \( \stdCone \) is reducible, i.e., \( \stdCone \) is a direct sum of two nonempty, nontrivial sets \( \stdCone_1 \) and \( \stdCone_2 \), then we have \( \stdCone_1 \properideal \stdCone, \, \stdCone_2 \properideal \stdCone \) and \( \dim(\stdCone) = \dim(\stdCone_1) + \dim(\stdCone_2) \).
			\item\label{item:perfectness-equivalence} A proper cone \( \stdCone \subseteq \R^p \) is perfect if and only if \( \dim\lie\aut(\stdCone) \geq p \).
		\end{enumerate}
	\end{lemma}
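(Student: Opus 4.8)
The plan is to treat the two items separately; each is a short assembly of standard facts rather than a computation. For \ref{item:reducibility-dim-face} the key is to upgrade the hypothesized direct-sum decomposition into a statement about faces and spans; for \ref{item:perfectness-equivalence} the key is to unwind the definition of ``perfect'' and invoke the equality of the dimensions of a Lie group and its Lie algebra.

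For \ref{item:reducibility-dim-face}, I would begin from $\stdCone = \stdCone_1 + \stdCone_2$ with $\stdCone_1,\stdCone_2$ nonempty and nontrivial and $\Span(\stdCone_1)\cap\Span(\stdCone_2) = \{\bm 0\}$; by \cite[Lemma~3.2]{LS75} both summands are automatically convex cones. The first step is to show that $\stdCone_1$ is a face of $\stdCone$ (and symmetrically $\stdCone_2$): given $\bm x,\bm y\in\stdCone$ with $\bm x + \bm y\in\stdCone_1$, write $\bm x = \bm x_1 + \bm x_2$ and $\bm y = \bm y_1 + \bm y_2$ with $\bm x_i,\bm y_i\in\stdCone_i$; then $\bm x_2 + \bm y_2 = (\bm x+\bm y) - (\bm x_1+\bm y_1)$ lies in $\Span(\stdCone_1)\cap\Span(\stdCone_2) = \{\bm 0\}$, and since $\stdCone$ pointed implies $\stdCone_2$ pointed, the relation $\bm x_2 + \bm y_2 = \bm 0$ forces $\bm x_2 = \bm y_2 = \bm 0$, whence $\bm x,\bm y\in\stdCone_1$. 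Next I would check $\stdCone_1\properideal\stdCone$: if instead $\stdCone_1 = \stdCone$, then $\stdCone_2\subseteq\stdCone_1$ gives $\Span\stdCone_2\subseteq\Span\stdCone_1$, hence $\Span\stdCone_2 = \{\bm 0\}$ and $\stdCone_2 = \{\bm 0\}$, contradicting nontriviality; the same argument gives $\stdCone_2\properideal\stdCone$. Finally, $\Span\stdCone = \Span\stdCone_1 + \Span\stdCone_2$ is an internal direct sum of subspaces (the sum is spanning, the intersection trivial), so $\dim\stdCone = \dim\stdCone_1 + \dim\stdCone_2$.

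For \ref{item:perfectness-equivalence}, recall that by definition (see \cite[Page~5]{GT2014} and \cite[Theorem~1]{OG2016}) a cone $\stdCone$ is perfect exactly when $\dim\aut(\stdCone)\ge\dim\stdCone$; for a proper cone $\stdCone\subseteq\R^p$ one has $\dim\stdCone = p$, so this reads $\dim\aut(\stdCone)\ge p$. It then remains only to replace $\dim\aut(\stdCone)$ by $\dim\lie\aut(\stdCone)$: since $\aut(\stdCone)$ is a closed subgroup of the general linear group $\mathrm{GL}(p,\R)$ (the condition $\bm M\stdCone = \stdCone$ is closed), it is a Lie group, and its dimension coincides with that of its Lie algebra (cf.\ \cite[Corollary~3.45]{H2015}, already invoked above), i.e.\ $\dim\aut(\stdCone) = \dim\lie\aut(\stdCone)$. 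Combining the two relations yields the claimed equivalence. I do not expect a genuine obstacle here; the only delicate point is the use of pointedness in \ref{item:reducibility-dim-face}, which is precisely what converts $\bm x_2 + \bm y_2 = \bm 0$ into $\bm x_2 = \bm y_2 = \bm 0$, and everything else is bookkeeping with spans together with the cited Lie-theoretic dimension identity.
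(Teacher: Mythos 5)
Your argument is correct and follows the same route as the paper, which simply cites \cite{LS75}, \cite{GT2014}, and \cite{OG2016} for these facts; you fill in the routine verifications behind those citations, and the pointedness step, the proper-face check, the dimension count, and the Lie-group/Lie-algebra dimension identity $\dim\aut(\stdCone)=\dim\lie\aut(\stdCone)$ are all sound. The only small omission is that the paper's definition of face requires closedness of $\stdCone_1$ and $\stdCone_2$, which your direct face argument does not check; this is supplied by \cite[Lemma~3.2]{LS75} (the very lemma the paper cites for faceness), or by observing that a convergent sequence in $\stdCone_1$ has its $\stdCone_2$-component forced to $\bm{0}$ under the direct-sum decomposition of $\Span\stdCone$, so it is a detail rather than a genuine gap.
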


	\begin{proof}
		\ref{item:reducibility-dim-face}
		The fact that $\stdCone_1$ and $\stdCone_2$ are faces is well-known, see \cite[Lemma~3.2]{LS75}. The conclusion on dimensions follows directly from the definition of direct sum.


			\ref{item:perfectness-equivalence} This fact comes from~\cite[Theorem 1]{OG2016} and the first display on~\cite[Page 4]{GT2014}.
	\end{proof}

	Using Lemma~\ref{lemma:reducibility-perfectness}, Theorems~\ref{thm:self-duality-homogeneity} and~\ref{thm:dimension-Aut(P)}, we have the following corollary.
	\begin{corollary}\label{corollary:irreducibility-nonhomogeneous-perfect}
		Let \( m \geq 1 \), \( n \geq 2 \) and \( \bm{\alpha} \in (0, 1)^n \) such that \( \sum_{i=1}^n \alpha _i = 1 \), then the following statements hold for the generalized power cone \( \gK \).
		\begin{enumerate}[label=(\roman*)]
			\item\label{item:self-duality-irreducibility} \( \gK \) is irreducible.
			\item\label{item:homogeneity-perfectness-symmetric} If \( m \geq 1, n = 2 \) and \( \bm{\alpha} := (1 / 2, 1 / 2) \), then \( \gK \) is homogeneous and perfect.
			\item\label{item:nonhomogeneity-(im)perfectness-nonsymmetric} If \( m \geq 1 \), \( n > 2 \) and \( \sum_{i=1}^n \alpha _i = 1 \) or \( m \geq 1 \), \( n = 2 \), \( \alpha_1 \neq \alpha _2 \) and \( \alpha_1 + \alpha _2 = 1 \), then \( \gK \) is nonhomogeneous.
			In addition, if \( 1 \leq m \leq 2 \), then \( \gK \) is not perfect; if \( m \geq 3 \), then \( \gK \) is perfect.
		\end{enumerate}
	\end{corollary}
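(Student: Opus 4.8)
The plan is to dispatch the three items in turn, in each case exploiting the facial classification of $\gK$ (Proposition~\ref{prop:facial-stru-gK}) together with the explicit description of $\aut(\gK)$ and its dimension obtained in Theorems~\ref{thm:self-duality-homogeneity} and~\ref{thm:dimension-Aut(P)}.

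For item~\ref{item:self-duality-irreducibility}, I would argue by contradiction. Suppose $\gK = \stdCone_1 \oplus \stdCone_2$ with both $\stdCone_i$ nonzero and nontrivial. By Lemma~\ref{lemma:reducibility-perfectness}\ref{item:reducibility-dim-face}, $\stdCone_1$ and $\stdCone_2$ are nonzero proper faces of $\gK$ with $\dim \stdCone_1 + \dim \stdCone_2 = m + n$. Now $\gK$ contains points whose $\bbx$-block is nonzero --- e.g.\ $(\bm u, \bm 1)$ with $\|\bm u\| = 1$ --- so, writing such a point as a sum of an element of $\stdCone_1$ and an element of $\stdCone_2$ (their $\bbx$-blocks cannot both vanish), at least one summand, say $\stdCone_1$, must contain a point with nonzero $\bbx$-block. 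Since every face of $\gK$ is exposed, Proposition~\ref{prop:facial-stru-gK} tells us that the only nonzero proper faces of $\gK$ are the rays of type $\Fr$ and the faces of type $\Fhatz$, and the latter are contained in $\{(\bbx,\tbx) : \bbx = \bm 0\}$; hence $\stdCone_1$ must be of type $\Fr$, so $\dim \stdCone_1 = 1$ and $\dim \stdCone_2 = m + n - 1 \ge n$. But Proposition~\ref{prop:facial-stru-gK} also shows every proper face of $\gK$ has dimension at most $n - 1$ --- a contradiction.

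For item~\ref{item:homogeneity-perfectness-symmetric}, when $n = 2$ and $\bm\alpha = (1/2,1/2)$ the cone $\gK$ is linearly isomorphic to a second-order cone (as already used in Theorem~\ref{thm:dimension-Aut(P)}\ref{item:dimension-Aut(P)-n=2}), hence symmetric and in particular homogeneous; and homogeneous cones are perfect, as recalled in Section~\ref{sec:context}. (Equivalently, $\dim\aut(\gK) = (m^2 + 3m + 4)/2 \ge m + 2 = \dim\gK$, so perfectness follows from Lemma~\ref{lemma:reducibility-perfectness}\ref{item:perfectness-equivalence}.) For item~\ref{item:nonhomogeneity-(im)perfectness-nonsymmetric}, non-homogeneity is read off from the \emph{shape} of the automorphisms: by Theorem~\ref{thm:self-duality-homogeneity}, every $\bm A \in \aut(\gK)$ is block diagonal, $\bm A = \diag(\bm B, \bm E)$, and therefore leaves the subspace $\{(\bbx,\tbx) : \bbx = \bm 0\}$ invariant; thus the property ``$\bbx = \bm 0$'' is invariant along automorphism orbits. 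Since $\ri \gK = {\rm int}\,\gK$ contains both a point with $\bbx = \bm 0$ (e.g.\ $(\bm 0, \bm 1)$) and a point with $\bbx \neq \bm 0$ (e.g.\ $(\bm u, \bm 1)$ with $\|\bm u\| = 1/2$), no automorphism maps the former to the latter, so $\gK$ is not homogeneous. For perfectness in this case, $\dim \gK = m + n$ while $\dim\lie\aut(\gK) = n + m(m-1)/2$ by Theorem~\ref{thm:dimension-Aut(P)}\ref{item:dimension-Aut(P)-nonsymmetric}; Lemma~\ref{lemma:reducibility-perfectness}\ref{item:perfectness-equivalence} then gives that $\gK$ is perfect iff $n + m(m-1)/2 \ge m + n$, i.e.\ $m(m-1)/2 \ge m$, i.e.\ $m \ge 3$, while for $1 \le m \le 2$ the reverse strict inequality holds and $\gK$ fails to be perfect.

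The main obstacle is item~\ref{item:self-duality-irreducibility}: everything there rests on pairing the dimension constraint coming from a reducible decomposition (Lemma~\ref{lemma:reducibility-perfectness}\ref{item:reducibility-dim-face}) with the reading of the facial classification --- namely that a nonzero proper face not contained in $\{\bbx = \bm 0\}$ is forced to be one-dimensional, and that all proper faces have dimension at most $n-1$. Both facts are immediate from Proposition~\ref{prop:facial-stru-gK}, so once they are spelled out the argument is just bookkeeping; items~\ref{item:homogeneity-perfectness-symmetric} and~\ref{item:nonhomogeneity-(im)perfectness-nonsymmetric} are then short deductions from Theorems~\ref{thm:self-duality-homogeneity}--\ref{thm:dimension-Aut(P)} and Lemma~\ref{lemma:reducibility-perfectness}.
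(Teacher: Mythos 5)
Your proposal is correct and takes essentially the same approach as the paper: items (ii) and (iii) use the isomorphism with the second-order cone, the block-diagonal shape of automorphisms from Theorem~\ref{thm:self-duality-homogeneity}, and the dimension count of Theorem~\ref{thm:dimension-Aut(P)} together with Lemma~\ref{lemma:reducibility-perfectness}\ref{item:perfectness-equivalence}, exactly as the paper does. For item (i), your argument is a slightly more careful version of the paper's: the paper asserts directly that any two nontrivial faces have dimensions summing to less than $m+n$, which is literally true only once one also uses that the $\Fhatz$-type faces all lie in the $n$-dimensional subspace $\{\bbx = \bm 0\}$ (so two of them cannot have disjoint spans summing in dimension to more than $n$). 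You spell this out by first forcing one summand to meet $\{\bbx \neq \bm 0\}$, hence to be a one-dimensional $\Fr$-type face, and then reading off the contradiction from the bound $\dim \stdFace \le n-1$ on proper faces. The underlying ideas and cited results are identical.
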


	\begin{proof}
		\ref{item:self-duality-irreducibility}
			Recall that the two types of faces of \( \gK \) are defined as in~\eqref{eq:1st-type-face-ray} and~\eqref{eq:2nd-type-face-d-dim}, with dimensions being \( 1 \) and \( n - | \stdIndex | \), respectively.
			Since \( \stdIndex \neq \emptyset\) and so \( | \stdIndex | \geq 1 \), for any possible pair of nontrivial faces \( \stdFace_1 \) and \( \stdFace_2 \) of \( \gK \), we have
			$
\dim(\stdFace_1) + \dim(\stdFace_2) < m + n = \dim(\gK)
$.
			This together with Lemma~\ref{lemma:reducibility-perfectness}\ref{item:reducibility-dim-face} show that \( \gK \) is irreducible.

			\ref{item:homogeneity-perfectness-symmetric}
			If \( m \geq 1, n = 2 \) and \( \bm{\alpha} := (1 / 2, 1 / 2) \), \( \gK \) is isomorphic to a second-order cone and so is homogeneous; see, for example, \cite[Section~3.1.2]{MC2020}.
			The perfectness holds by Theorem~\ref{thm:dimension-Aut(P)}\ref{item:dimension-Aut(P)-n=2} and Lemma~\ref{lemma:reducibility-perfectness}\ref{item:perfectness-equivalence}.

			\ref{item:nonhomogeneity-(im)perfectness-nonsymmetric}
			Take any \( m \geq 1, n > 2 \) with any \( \bm{\alpha} \in (0, 1)^n \) such that \( \sum_{i=1}^n \alpha _i = 1  \) or \( m \geq 1, n = 2 \) with any \( \bm{\alpha} \in (0, 1)^2 \) such that \( \alpha_1 \neq \alpha _2 \), consider \( \bm{x} = (\bm{0}, \tbx) \in \ri \gK \) and \( \bm{y} = (\bby, \tby) \in \ri \gK \), where \( \min_i\{\tx_i\} > 0\), \(\min_i\{\ty\} > 0 \) and \( \bby \neq \bm{0}, \,\| \bby \|  < \prod_{i=1}^n \ty_i ^{\alpha _i}\). Using \eqref{eq:form-A-AK=K}, for all \( \bm{A} \) such that \( \bm{A} \gK = \gK \), we have \( \bm{A}\bm{x} \neq \bm{y} \) because \( \bm{B} \bm{0} = \bm{0} \neq \bby \) for all possible \( \bm{B} \).
			Then by definition, \( \gK \) with \( m \geq 1 \), \( n = 2 \) and \( \sum_{i=1}^n \alpha _i = 1 \) or \( m \geq 1 \), \( n = 2 \), \( \alpha_1 \neq \alpha _2 \) and \( \alpha_1 + \alpha _2 = 1 \) is nonhomogeneous.
			By Theorem~\ref{thm:dimension-Aut(P)}\ref{item:dimension-Aut(P)-nonsymmetric}, we have \( \dim\lie\aut(\gK) = n + \frac{m(m-1)}{2} \geq m + n \) if and only if \( m \geq 3 \). The conclusion concerning perfectness now follows from
			this and Lemma~\ref{lemma:reducibility-perfectness}\ref{item:perfectness-equivalence}.
	\end{proof}
\section*{Acknowledgements}
{
  We thank the referees for their  comments, which helped to improve the paper.
}


	\bibliographystyle{abbrvurl}
	\bibliography{bib_plain}
\end{document}